\newcommand{\beq}{\begin{equation}}
\newcommand{\eeq}{\end{equation}}
\newcommand{\beqs}{\begin{equation*}}
\newcommand{\eeqs}{\end{equation*}}
\newcommand{\beqstar}{\begin{equation*}}
\newcommand{\eeqstar}{\end{equation*}}
\newcommand{\ba}{\begin{array}}
\newcommand{\ea}{\end{array}}
\newcommand{\beas}{\begin{eqnarray*}}
\newcommand{\eeas}{\end{eqnarray*}}
\newcommand{\bea}{\begin{eqnarray}}
\newcommand{\eea}{\end{eqnarray}}
\newcommand{\al}{\alpha}
\newcommand{\R}{\ensuremath{\mathbb R}}
\newcommand{\K}{\ensuremath{\mathbb K}}
\newcommand{\C}{\ensuremath{\mathbb C}}
\newcommand{\N}{\ensuremath{\mathbb N}}
\newcommand{\Z}{\ensuremath{\mathbb Z}}
\newcommand{\mD}{\mathcal D}
\newcommand{\bvec}[1]{\mathbf{#1}}
\def\vecx{\bvec x}
\def\vecy{\bvec y}
\def\vecz{\bvec z}
\def\vecb{\bvec b}
\def\vecf{\bvec f}
\def\vecg{\bvec g}
\def\vecQ{\bvec Q}
\def\vecU{\bvec U}
\def\vecV{\bvec V}
\def\vece{\bvec e}
\def\vecu{\bvec u}
\def\vecv{\bvec v}
\def\vecx{\bvec x}
\def\vecw{\bvec w}
\def\vecX{\bvec X}
\def\veck{\bvec k}
\def\vecj{\bvec j}
\def\vecL{\bvec L}
\def\vecm{\bvec m}
\newcommand{\inprod}[1]{\langle{#1}\rangle}
\newcommand{\doubleinprod}[1]{\langle\!\langle{#1}\rangle\!\rangle}
\newcommand{\sump}{\sideset{}{'}\sum}
\newcommand{\SP}{\mathcal{F}}
\newcommand{\repSP}{\mathcal{F}}
\newcommand{\PL}{\mathcal P}
\newcommand{\domL}{{\mathbb T_\vecL}}
\newcommand{\bds}{\begin{displaystyle}}
\newcommand{\eds}{\end{displaystyle}}
\newcommand{\ddt}{\frac{d}{dt}}
\newcommand{\varep}{\varepsilon}
\newcommand{\s}{\sigma}
\newcommand{\widecheck}{\check}
\numberwithin{equation}{section}
\def\eqdef{\stackrel{\rm def}{=}}
\title{Asymptotic Expansions in Time for Rotating Incompressible Viscous Fluids}
\author{Luan T.~Hoang$^1$}
\address{$^1$ Department of Mathematics and Statistics, Texas Tech University,  1108 Memorial Circle, Lubbock, TX 79409-1042, U.S.A.}
\email{luan.hoang@ttu.edu}
\author{Edriss S.~Titi$^{2}$}
\address{$^2$ Department of Mathematics, 3368 TAMU, Texas A\&M University, College Station, TX 77843-3368, U.S.A.}
\address{Department of Applied Mathematics and Theoretical Physics, University of Cambridge, Cambridge CB3 0WA, U.K.}
\address{Weizmann Institute of Science, Department of Computer Science and Applied Mathematics,
P.O. Box 26, Rehovot, 76100, Israel}
\email{titi@math.tamu.edu and Edriss.Titi@damtp.cam.ac.uk}
\date{July 1, 2020}
\begin{document}

\newtheorem{theorem}{Theorem}[section]
\newtheorem{problem}[theorem]{Problem}
\newtheorem{lemma}[theorem]{Lemma}
\newtheorem{corollary}[theorem]{Corollary}
\newtheorem{proposition}[theorem]{Proposition}
\newtheorem{definition}[theorem]{Definition}
\newtheorem{assumption}[theorem]{Assumption}

\theoremstyle{remark}
\newtheorem{remark}[theorem]{\bf{Remark}}
\newtheorem{example}[theorem]{\bf{Example}}
\newtheorem{defn}[theorem]{\bf{Definition}}

\newcommand{\statenum}{\refstepcounter{equation}
{\flushleft (\theequation) } }

\begin{abstract}
We study the three-dimensional Navier--Stokes equations of rotating incompressible viscous fluids with periodic boundary conditions.
The asymptotic expansions,  as time goes to infinity, are derived in all Gevrey spaces for any Leray-Hopf weak solutions in terms of oscillating, exponentially decaying functions.
The results are established for all non-zero rotation speeds, and for both cases with  and without the zero spatial average of the solutions.
Our method makes use of the Poincar\'e waves to rewrite the equations, and then implements the Gevrey norm techniques to deal with the resulting time-dependent bi-linear form.
Special solutions are also found which form infinite dimensional invariant linear manifolds.
\end{abstract}

\keywords{Navier-Stokes equations, rotating fluids, asymptotic expansions, long-time dynamics.}

\subjclass[2010]{35Q30, 76D05, 35C20, 76E07.}

\maketitle

\tableofcontents
\pagestyle{myheadings}\markboth{L. T. Hoang and E. S. Titi}{Asymptotic Expansions for Rotating Incompressible Viscous Fluids}

\section{Introduction}\label{introsec}

We study the long-time behavior of the three-dimensional incompressible viscous fluids rotated about the vertical axis with a constant angular speed.
The Navier--Stokes equations (NSE) written in the rotating frame are used to describe the fluid dynamics in this case, see, e.g., \cite{VanyoBook2001}.
We denote by $\vecx\in\R^3$ the spatial variables, $t\in\R_+=[0,\infty)$ the time variable, and  $\{\vece_1,\vece_2,\vece_3\}$ the standard canonical basis of $\R^3$.
The NSE for the rotating fluids  are
\begin{align}
\label{NPDE}
\frac{\partial \vecu}{\partial t} - \nu \Delta \vecu + (\vecu\cdot\nabla) \vecu + \nabla p +
\Omega \vece_3\times \vecu &=0,\\
\label{divfree}
{\rm div}\, \vecu&=0,
\end{align}
where $\vecu(\vecx,t)$ is the velocity field, $p$ is the pressure adjusted by the fluid's constant density, gravity and centrifugal force,
 $\nu>0$ is the kinematic viscosity, and $\frac{1}{2}\Omega \vece_3$ is the angular velocity of the rotation.

Above, $\Omega \vece_3\times \vecu$ represents the Coriolis force exerted on the fluid. We will write
\beqs
\vece_3\times \vecu=J \vecu,\text{ where }
J=\begin{pmatrix}
  0&-1&0\\1&0&0\\0&0&0
 \end{pmatrix}.
\eeqs

Equations \eqref{NPDE} and \eqref{divfree} comprise a system of nonlinear partial differential equations with the unknowns $\vecu$ and $p$, while the constants $\nu>0$ and $\Omega$ are given. We will study this system within the context of spatially periodic functions.

Let $L_1,L_2,L_3>0$ be the spatial periods and denote $\vecL=(L_1,L_2,L_3)$.  A function $f:\R^3\to\R^m$, for some $m\in\N$, is $\vecL$-periodic if
\beqs
f(\vecx+L_j \vece_j)=f(\vecx)\text{ for all } \vecx\in\R^3,\ j=1,2,3.
\eeqs

Consider the $\vecL$-periodic solutions $(\vecu,p)$, that is, $\vecu(\cdot,t)$ and $p(\cdot,t)$ are $\vecL$-periodic for all $t> 0$.

Let $L_*=\max\{L_1,L_2,L_3\}$ and $\lambda_1=(2\pi/L_*)^2$.
Under the transformation
\beqs \vecu(\vecx,t)=\lambda_1^{1/2}\nu\, \vecv(\lambda_1^{1/2}\vecx,\lambda_1\nu
 t),
 \quad p(\vecx,t)=\lambda_1\nu^2 q(\lambda_1^{1/2}\vecx,\lambda_1\nu t),
 \quad \Omega=\lambda_1\nu\omega,
 \eeqs
where $\vecv(\vecy,\tau)$ and $q(\vecy,\tau)$ are $\lambda_1^{1/2}\vecL$-periodic adimensional functions,
system  \eqref{NPDE} and \eqref{divfree} becomes
\beqs \lambda_1^{3/2}\nu^2\Big(\frac{\partial \vecv}{\partial \tau} - \Delta_{\vecy} \vecv + (\vecv\cdot\nabla_{\vecy}) \vecv + \nabla_{\vecy} q + \omega J \vecv  \Big)=0\text{ and }
 \lambda_1\nu\, {\rm div}_{\vecy} \vecv=0,
\eeqs
thus,
\beq\label{vyeq}
\frac{\partial \vecv}{\partial \tau} - \Delta_{\vecy} \vecv + (\vecv\cdot\nabla_{\vecy}) \vecv + \nabla_{\vecy} q + \omega J \vecv =0
\text{ and }
{\rm div}_{\vecy} \vecv=0.
\eeq

Thanks to \eqref{vyeq},  we can assume hereafter, without loss of generality, that the Navier--Stokes system \eqref{NPDE} and \eqref{divfree} has
\beq\label{fixpar} \nu=1,\quad  L_*=2\pi,\quad  \lambda_1=1.
\eeq

In dealing with $\vecL$-periodic functions, it is convenient to formulate the equations and functional spaces using the domain (the three-dimensional flat torus)
$$\domL\eqdef (\R/L_1\Z)\times (\R/L_2\Z)\times (\R/L_3\Z).$$

Regarding the notation in this paper, a vector in $\C^3$ is viewed as a column vector, and we denote the dot product between $\vecx,\vecy\in\C^3$ by
\beqs
\vecx\cdot \vecy =\vecy^{\rm T}\vecx=\vecx^{\rm T}\vecy.
\eeqs
Hence, the standard inner product in $\C^3$ is $\vecx\cdot\bar\vecy$.

For $\veck=(k_1,k_2,k_3)\in\Z^3$, denote
\beq\label{kcheck}
\widecheck \veck=(\widecheck k_1,\widecheck k_2,\widecheck k_3)\eqdef 2\pi (k_1/L_1,k_2/L_2,k_3/L_3),
\eeq
and, in case $\veck\ne \mathbf 0$,
\beq\label{ktil}
\widetilde \veck=(\widetilde k_1,\widetilde k_2,\widetilde k_3)\eqdef \widecheck\veck/|\widecheck\veck|,
\eeq
\beq\label{Xk}
  X_\veck\eqdef \{ \vecz\in \C^3: \vecz\cdot\widecheck\veck=0 \}=\{ \vecz\in \C^3: \vecz\cdot\widetilde\veck=0 \}.
\eeq

In the following, we present the functional setting and functional formulations for the NSE. We refer the reader to the books \cite{LadyFlowbook69,CFbook,TemamAMSbook,TemamSIAMbook,FMRTbook} for more details.

Denote the inner product and norm in $L^2(\domL)^3$ by $\inprod{\cdot,\cdot}$ and $|\cdot|$, respectively.
The latter notation is also used for the modulus of a complex number and the length of a vector in $\C^n$, but its meaning will be clear from the context.

Each $\vecu\in L^2(\domL)^3$ has the Fourier series
\beq\label{uF}
\vecu(\vecx)=\sum_{\veck\in\Z^3} \widehat \vecu_\veck e^{i\widecheck\veck\cdot\vecx},
\eeq
where $i=\sqrt{-1}$, $\widehat\vecu_\veck\in \C^3$ are the Fourier coefficients with the reality condition $\widehat \vecu_{-\veck}= \overline{\widehat \vecu_\veck}$.
If $\vecu$ has zero spatial average over $\domL$ then $\widehat\vecu_{\mathbf 0}=0$.

We now focus on the case of solutions $\vecu(\vecx,t)$ with zero spatial average over $\domL$, for any $t\ge 0$. The general case will be studied in section \ref{nonzerosec}. For brevity, denote
\beqs
\sump=\sum_{\veck\in\Z^3\setminus\{\mathbf 0\}}.
\eeqs

Let $\mathcal V$ be the space of zero-average, divergence-free $\vecL$-periodic trigonometric polynomial vector fields, that is, it consists  of
functions
$$u=\sump  \widehat\vecu_\veck e^{i \widecheck \veck\cdot \vecx}$$
where  $\widehat\vecu_\veck\in X_\veck$,
 $\widehat\vecu_{-\veck}=\overline{\widehat\vecu_\veck}$ for all $\veck\in\Z^3\setminus\{\mathbf 0\}$,
and $\widehat\vecu_\veck\ne \mathbf 0$ for only finitely many $\veck$'s.

Let $H$, respectively (resp.) $V$, be the closure of $\mathcal V$ in $L^2(\domL)^3$, resp. $H^1(\domL)^3$.

We use the following embeddings and identification
$$V\subset H=H'\subset V',$$
where each space is dense in the next one, and the embeddings are compact.

Let $\PL$ denote the orthogonal (Leray) projection from $L^2(\domL)^3$ onto $H$.
More precisely,
\beqs
\PL \Big(\sum_{\veck\in\Z^3}  \widehat \vecu_\veck e^{i\widecheck\veck\cdot\vecx}\Big)=\sump [\widehat \vecu_\veck-(\widehat \vecu_\veck\cdot \widetilde\veck)\widetilde\veck] e^{i\widecheck\veck\cdot\vecx}
=\sum_{\veck\in\Z^3} \widehat P_\veck \widehat \vecu_\veck e^{i\widecheck\veck\cdot\vecx}
=\sump \widehat P_\veck \widehat \vecu_\veck e^{i\widecheck\veck\cdot\vecx},
\eeqs
where $\widehat P_\veck$'s are symmetric $3\times 3$ matrices given by
\beq\label{Pk}
\widehat P_{\mathbf 0}=0,\quad
\widehat P_\veck=I_3-\widetilde \veck \widetilde\veck^{\rm T}\text{ for }\veck\in\Z^3\setminus\{\mathbf 0\}.
\eeq

The Stokes operator $A$ is a bounded linear mapping from $V$ to its dual space $V'$ defined by
\beqs
\inprod{A\vecu,\vecv}_{V',V}=
\doubleinprod{\vecu,\vecv}
\eqdef \sum_{j=1}^3 \inprod{ \frac{\partial \vecu}{\partial x_j} , \frac{\partial \vecv}{\partial x_j} }, \text{ for all } \vecu,\vecv\in V.
\eeqs

As an unbounded operator on $H$, the operator $A$ has the domain $\mD(A)=V\cap H^2(\domL)^3$, and, under  the current consideration of periodicity conditions,
\beqs A\vecu = - \PL \Delta \vecu=-\Delta \vecu\in H, \text{ for all }\vecu\in\mD(A).
\eeqs
With the Fourier series, this reads as
\beqs
Au=\sump |\widecheck \veck|^2 \widehat\vecu_\veck e^{i \widecheck \veck\cdot \vecx},
\text{ for }
u=\sump \widehat\vecu_\veck e^{i \widecheck \veck\cdot \vecx}\in\mathcal D(A).
\eeqs

The spectrum  of $A$ is known to be
\beq\label{spA}
\sigma(A)=\big\{|\widecheck\veck|^2: \ \veck\in\Z^3, \veck\ne \mathbf 0\big\},
\eeq
and each $\lambda\in \sigma(A)$ is an eigenvalue with finite multiplicity.
We order
\beq\label{Biglam}
\sigma(A)=\{\Lambda_n:n\in\N\},\text{ where the sequence $(\Lambda_n)_{n=1}^\infty$ is strictly increasing}.
\eeq

The additive semigroup generated by $\sigma(A)$ is
\beq\label{semispecA}
\langle \sigma(A)\rangle \eqdef \Big\{\sum_{j=1}^n \alpha_j \,:\, n\in\N, \alpha_j\in\sigma(A)\text{ for } 1\le j\le n\Big\}.
\eeq

The set $\langle \sigma(A)\rangle$ is ordered as a strictly increasing sequence $(\mu_n)_{n=1}^\infty$, i.e.,
\beq\label{mudef}
\langle \sigma(A)\rangle=\{\mu_n:n\in\N\},\quad \mu_{n+1}>\mu_n \quad\forall n\in\N.
\eeq

Note, by \eqref{fixpar}, \eqref{spA}, \eqref{Biglam} and \eqref{mudef}, that $\mu_1=\Lambda_1=\lambda_1=1$.
Also,
\beq\label{musum}
\mu_m+\mu_n\in \langle \sigma(A)\rangle \quad\forall m,n\in\N,
\eeq
\beq\label{speclim}
\Lambda_n\to\infty \text{ and } \mu_n\to\infty \text{ as }n\to\infty.
\eeq

For $\Lambda\in\sigma (A)$, we denote by  $R_\Lambda$ the orthogonal projection from $H$ onto the eigenspace of $A$ corresponding to $\Lambda$,
and set $$P_\Lambda=\sum_{\lambda\in \sigma(A),\lambda\le \Lambda}R_\lambda.$$
Note that each vector space $P_\Lambda H$ is finite dimensional.

\medskip
For $\alpha,\sigma \in \R$ and  $u=\sump \widehat \vecu_\veck e^{i\widecheck\veck\cdot \vecx}\in H$, define
$$A^\alpha u=\sump |\widecheck\veck|^{2\alpha} \widehat \vecu_\veck e^{i\widecheck\veck\cdot
\vecx},\quad e^{\sigma A^{1/2}} u=\sump e^{\sigma
|\widecheck\veck|} \widehat \vecu_\veck e^{i\widecheck\veck\cdot
\vecx},$$
and
$$A^\alpha e^{\sigma A^{1/2}} u=\sump |\widecheck\veck|^{2\alpha}e^{\sigma
|\widecheck\veck|} \widehat \vecu_\veck e^{i\widecheck\veck\cdot
\vecx}.$$

For $\alpha,\sigma\ge 0$, the  Gevrey spaces are defined by
\beqs
G_{\alpha,\sigma}=\mD(A^\alpha e^{\sigma A^{1/2}} )\eqdef \{ u\in H: |u|_{\alpha,\sigma}\eqdef |A^\alpha
e^{\sigma A^{1/2}}u|<\infty\}.
\eeqs
In particular, when $\sigma=0$  the domain of the fractional operator $A^\al$ is
\beqs
\mD(A^\alpha)=G_{\alpha,0}=\{ u\in H: |A^\alpha u|=|u|_{\alpha,0}<\infty\}.
\eeqs

Thanks to the zero-average condition, the norm $|A^{m/2}u|$ is equivalent to $\|u\|_{H^m(\Omega)^3}$ on the space $\mathcal D(A^{m/2})$, for $m=0,1,2,\ldots$

Note that $\mD(A^0)=H$,  $\mD(A^{1/2})=V$, and  $\|u\|\eqdef |\nabla u|$ is equal to $|A^{1/2}u|$, for all $u\in V$.
Also, the norms $|\cdot|_{\alpha,\sigma}$ are increasing in $\alpha$, $\sigma$, hence, the spaces $G_{\alpha,\sigma}$ are decreasing in $\alpha$, $\sigma$.

Observe, for $\sigma>0$, that  $G_{\alpha,\sigma}$ consists of real analytic divergence-free vector fields and its norm is stronger than all the Sobolev norms. Any asymptotic expansions in such Gevrey spaces induce estimates of the remainders in their very strong norms, see \eqref{vremain} and \eqref{uremain} below. Those estimates can be beneficial to numericals, and have potential applications in studying practical aspects of the physics of fluids. See, e.g., \cite{DoeringTiti} for a consequence of the estimates in Gevrey norms connected with the Kolmogorov theory of turbulence.
From the technical point of view, the use of the Gevrey norms enables an immediate regularity bootstrapping in all Sobolev spaces via the method by Foias and Temam \cite{FT-Gevrey}.
It avoids the lengthy, gradual bootstrapping of step-by-step increment in Sobolev regularity. For example, the original paper by Foias and Saut \cite{FS87} has to establish that the time derivatives of all orders of the solution and remainders belong to all Sobolev spaces.
See also the contrast between the two bootstrapping mechanisms in the proof of Proposition 3.4 of \cite{HM2}.

\medskip
Regarding the nonlinear terms in the NSE, a bounded linear map $B:V\times V\to V'$ is defined by
\beqs
\inprod{B(\vecu,\vecv),\vecw}_{V',V}=b(\vecu,\vecv,\vecw)\eqdef \int_\domL ((\vecu\cdot \nabla) \vecv)\cdot \vecw\, d\vecx, \text{ for all } \vecu,\vecv,\vecw\in V.
\eeqs
In particular,
\beq\label{defBuv}
B(\vecu,\vecv)=\PL ((\vecu\cdot \nabla) \vecv), \text{ for all }\vecu,\vecv\in\mD(A).
\eeq
In fact, if $u=\sump \widehat\vecu_\veck  e^{i \widecheck \veck\cdot \vecx}\in \mathcal D(A)$ and
$v=\sump \widehat\vecv_\veck  e^{i \widecheck \veck\cdot \vecx}\in \mathcal D(A)$, then $(u\cdot \nabla )v$ has zero spatial average and
\begin{align}\label{unabv}
(u\cdot\nabla)v&=\sump \widehat\vecb_\veck  e^{i \widecheck \veck\cdot \vecx},\text{ where }
\widehat\vecb_\veck = \sum_{\widecheck\vecm+\widecheck j=\widecheck\veck} i (\widehat\vecu_\vecm\cdot \widecheck \veck) \widehat\vecv_\vecj, \\
\intertext{and, consequently, }
\label{BF}
B(u,v)&=\sump \widehat P_\veck \widehat\vecb_\veck  e^{i \widecheck \veck\cdot \vecx}.
\end{align}

Applying the projection $\PL$ of the equation \eqref{NPDE}, we obtain
\beq\label{ueq1}
\frac{du}{dt} +Au + B(u,u) +\Omega \PL J u =0,
\eeq
with solution $u\in H$.
In the case of non-rotation, i.e. $\Omega=0$, equation \eqref{ueq1} is the standard NSE
\beq\label{ueqzero}
\frac{du}{dt} +Au + B(u,u) =0.
\eeq

Since $\PL u=u$, for $u\in H$, equation \eqref{ueq1} is equivalent to
\beq\label{NSO}
\frac{du}{dt} +Au + B(u,u) +\Omega S u =0
\eeq
with $u\in H$, where $S=\PL J \PL$. Equation \eqref{NSO} will be the focus of our study.

Note that if $u$ is as in \eqref{uF}, then
\beq\label{SF}
Su=\sump \widehat P_\veck  J\widehat P_\veck \widehat \vecu_\veck e^{i\widecheck\veck\cdot\vecx}.
\eeq

We have the following elementary properties:
\beq\label{Sper}
\inprod{Su,A^\alpha e^{\sigma A^{1/2}} u}=0, \text{ for all } \alpha,\sigma\ge 0\text{ and } u\in \mathcal D(A^\alpha e^{\sigma A^{1/2}}),
\eeq
\beq\label{borth}
b(u,v,w)=-b(u,w,v) \text{ and } b(u,v,v)=0, \text{ for all } u,v,w\in V.
\eeq

Because of relation \eqref{Sper} with $\alpha=\sigma=0$, the energy balance/inequality for \eqref{NSO} is the same as for \eqref{ueqzero}.
Hence, the following definitions of  weak and regular solutions for \eqref{NSO} are quite similar to those for \eqref{ueqzero}, see, e.g., \cite{CFbook,FMRTbook,TemamAMSbook}.

\begin{definition}\label{lhdef}
{\rm (a)} A Leray-Hopf weak solution $u(t)$ of \eqref{NSO} is a mapping from $[0,\infty)$ to $H$ such that
\beqs
u\in C([0,\infty),H_{\rm w})\cap L^2_{\rm loc}([0,\infty),V),\
u'\in L^{4/3}_{\rm loc}([0,\infty),V'),
\eeqs
and satisfies
\beqs
\ddt \inprod{u(t),w}+\doubleinprod{u(t),w}+b(u(t),u(t),w)+\Omega \inprod{Su,w}=0
\eeqs
in the distribution sense in $(0,\infty)$ (in fact in $L^{4/3}_{\rm loc}([0,\infty))$), for all $w\in V$, and the energy inequality
\beqs
\frac12|u(t)|^2+\int_{t_0}^t \|u(\tau)\|^2d\tau\le \frac12|u(t_0)|^2
\eeqs
holds for $t_0=0$ and almost all $t_0\in(0,\infty)$, and for all $t\ge t_0$.
Here, $H_{\rm w}$ denotes the topological vector space $H$ with the weak topology.

{\rm (b)}  We say a function $u(t)$ is a Leray-Hopf weak solution on $[T,\infty)$ if $u(T+\cdot)$ is a Leray-Hopf weak solution.

A Leray-Hopf weak solution $u(t)$ on $[T_0,\infty)$, for some $T_0\in\R_+$, is a \emph{regular solution} on $[T_0,T_0+T)$, for some $0<T\le \infty$, if
\beqs
u\in C([T_0,T_0+T),V)\cap L^2_{\rm loc}([T_0,T_0+T),\mD(A)), \text{ and }u'\in L^2_{\rm loc}([T_0,T_0+T),H).
\eeqs

A \emph{global regular solution} is a regular solution on $[0,\infty)$.
\end{definition}

Same as for equation \eqref{ueqzero}, the basic questions, for equation \eqref{NSO}, about the uniqueness of weak solutions, and the global existence of regular solutions are still open, see  Theorem \ref{typicalthm} below.
Regarding the second question, it is proved in Babin-Mahalov-Nicolaenko \cite{BMN99b} that for any initial data $u_0\in V$, there is $\Omega_0>0$ depending on $u_0$ such that global regular solutions exist for all $|\Omega|>\Omega_0$.
Moreover, it is also showed in \cite{BMN99b} that the long-time dynamics of \eqref{NSO}, with an additional non-potential body force, is close to that of a so-called ``$2\frac12$-dimensional NSE'' when $|\Omega|$ is sufficiently large.
However, these basic questions will be bypassed and the mentioned results of \cite{BMN99b} will not be needed in this study.
It is due to the eventual regularity and decay (to zero) of the solutions, see Proposition \ref{decay2} below.
We, instead, will focus on the refined analysis of that decay.

When $\Omega=0$, the long-time dynamics of equation \eqref{ueqzero}, which covers the case of potential forces, is studied in details early in \cite{FS83,FS84a,FS84b,FS87,FS91}, and later in \cite{FHOZ1,FHOZ2,FHN1,FHN2,FHS1,HM1}. (The case of non-potential forces is treated in \cite{HM2,CaH1,CaH2}. See also the survey paper \cite{FHS2} for more information.) Briefly speaking, it is proved in \cite{FS87} that any solution $u(t)$ of \eqref{ueqzero} admits an asymptotic expansion, as $t\to\infty$,
\beq\label{exreal}
u(t)\sim \sum_{n=1}^\infty q_n(t)e^{-\mu_n t},
\eeq
where $q_n(t)$ is an $\mathcal V$-valued polynomial in $t$. See Definition \ref{Sexpand} below for more information.

When $\Omega\ne 0$, one can initially view equation \eqref{NSO} the same as \eqref{ueqzero} with the linear operator $\tilde A=A+\Omega S$ replacing $A$, and follow \cite{FS87} to obtain the asymptotic expansions. However, the spectrum of $\tilde A$, and its eigenspaces will be more complicated. The expansions will be of the form
\beq\label{excomplex}
u(t)\sim \sum_{\mu\in\langle \sigma(\tilde A)\rangle} q_\mu(t)e^{-\mu t},
\eeq
where $\langle \sigma(\tilde A)\rangle$ is defined similarly to \eqref{semispecA}.

One can see that the additive semigroup $\langle \sigma(\tilde A)\rangle$ is a set of complex numbers, much more complicated than $\langle \sigma(A)\rangle$, and consequently expansion \eqref{excomplex} is considerably more complicated than \eqref{exreal}. The construction of $q_\mu(t)$ must consider various scenarios including resonance and non-resonance cases, which are harder to track for complex values $\mu$'s.
To avoid all these technicalities, we propose another approach that converts \eqref{NSO} to \eqref{ueqzero} with a time-dependent bilinear form.
By maintaining the operator $A$, the expansions will be similar to \eqref{exreal}, and the proof will be direct and clean in the spirit of \cite{FS87}, taking advantage of recent improvements in \cite{HM1,HM2}. This approach was, in fact, successfully used in \cite{BMN99b} in studying the global well-posedness for regular solutions of \eqref{NSO}.

\medskip
\noindent\textbf{Rewriting the variational NSE using the Poincar\'e waves.}
In dealing with term $\Omega S u$ in \eqref{NSO}, we will make a change of variables using the exponential operator $e^{t S}$.
Clearly, $S$ is a bounded linear operator on $L^2(\domL)^3$ with norm $\|S\|_{\mathcal L(L^2(\domL)^3)}=1$.
For our problem, we restrict $e^{tS}$ to $H$ only, and have the isometry group $e^{tS}:H\to H$, $t\in\R$, is analytic in $t$.

It is well-known, see e.g. \cite{BMN99b,CheminBook2006},  that one has, for $u=\sump  \widehat\vecu_\veck e^{i\widecheck\veck\cdot\vecx}\in H$,
\beq\label{etS}
e^{tS}u=\sump  E_\veck(\widetilde k_3 t)\widehat\vecu_\veck e^{i\widecheck\veck\cdot\vecx},
\quad \text{where }
E_\veck(t)= \cos(t)I_3+\sin(t)J_\veck,
\eeq
with $J_\veck$ being the $3\times 3$ matrix for which $J_\veck\vecz=\widetilde\veck \times \vecz$, for all $\vecz\in \C^3$.
Explicitly, the matrix $J_\veck$, for $\veck\in\Z^3\setminus\{\mathbf 0\}$, is
\beq\label{Jk}
J_\veck=\begin{pmatrix}
  0 & -\widetilde k_3 & \widetilde k_2\\
  \widetilde k_3   & 0 & - \widetilde k_1 \\
  - \widetilde k_2 &  \widetilde k_1 &0
 \end{pmatrix}.
\eeq

For the reader's convenience, we include  a simple proof of \eqref{etS} in Appendix \ref{apd}.

One sees that
\begin{align*}
(E_\veck(t))^*&=E_\veck(-t),\\
|E_\veck(t)\vecz|&=|\vecz|,\text{ for all } \vecz\in X_\veck.
\end{align*}

From these properties, we deduce
\begin{align}
\label{eaj}
(e^{tS})^*&=e^{-tS} \quad(\text{on } H),\\
\label{ESiso}
|e^{tS}u|_{\alpha,\sigma}&=|u|_{\alpha,\sigma},\text{ for all }\alpha,\sigma\ge 0 \text{ and } u\in\mathcal D(A^\alpha e^{\sigma A^{1/2}}).
\end{align}

Also, some relations between $S$ and $A$ are
\beq\label{SAcom}
ASu=SAu,\quad e^{tS}Au=Ae^{tS}u,\text{ for all }u\in\mathcal D(A)\text{ and } t\in\R.
\eeq

The bi-linear form in \eqref{NSO} will be transformed to a similar, but time-dependent one that we describe below.

Let $t,\Omega\in\R$. Define $b(t,\cdot,\cdot,\cdot):V^3\to\R$ and $b_\Omega(t,\cdot,\cdot,\cdot):V^3\to\R$ by
\beqs
b(t,u,v,w)=b(e^{-tS}u,e^{-tS}v,e^{-tS}w),\quad b_\Omega(t,u,v,w)=b(\Omega t,u,v,w),
\eeqs
for all $u,v,w\in V$. We then  define $B(t,\cdot,\cdot):V\times V\to V'$ by
\beq\label{defBt}
\inprod{B(t,u,v),w}_{V',V}=b(t,u,v,w), \text{ for all } u,v,w\in V.
\eeq
In particular, thanks to \eqref{defBuv} and \eqref{eaj},
\beq\label{defBtuv}
B(t,u,v)=e^{ t S}B(e^{- t S}u,e^{- t S}v), \text{ for all } u,v\in\mD(A).
\eeq

Define $B_\Omega(t,u,v)=B(\Omega t,u,v)$.

\medskip
We now rewrite equation \eqref{NSO} using the Poincar\'e waves $e^{-\Omega t S}w$, for $w\in H$ and $t\in\R$.

Let $u(t)$ be a solution of \eqref{NSO}. Set $v(t)=e^{\Omega t S}u(t)$, or equivalently, $u(t)=e^{-\Omega t S} v(t)$.

If $u\in C^1((0,\infty),H)\cap C((0,\infty),\mathcal D(A))$ then, with \eqref{SAcom} taken into account, $v$  solves
\beq\label{veq}
\frac{dv}{dt} +Av+B_\Omega(t,v,v)=0,\quad t>0.
\eeq

With this formulation, equation \eqref{veq} resembles more with \eqref{ueqzero} than \eqref{NSO}.
The difference between \eqref{veq} and \eqref{ueqzero} is the time-dependent bi-linear form $B_\Omega(t,\cdot,\cdot)$.
However, this bi-linear form turns out to possess many features similar to $B(\cdot,\cdot)$ itself.

For example, from \eqref{defBt} and \eqref{borth}, we have, for all $\Omega,t\in\R$ and $u,v,w\in V$, that
\beq\label{BS1}
\inprod{ B_\Omega(t,u,v),w}_{V',V}=-\inprod{ B_\Omega(t,u,w),v}_{V',V},
\eeq
and, consequently,
\beq\label{BS2}
\inprod{B_\Omega(t,u,v),v}_{V',V}=0.
\eeq

This prompts the following definition of weak solutions of \eqref{veq}.

\begin{definition}\label{lhv}
A Leray-Hopf weak solution $v(t)$ of \eqref{veq} is a mapping from $[0,\infty)$ to $H$ such that
\beqs
v\in C([0,\infty),H_{\rm w})\cap L^2_{\rm loc}([0,\infty),V),\quad v'\in L^{4/3}_{\rm loc}([0,\infty),V'),
\eeqs
and satisfies
\beqs
\ddt \inprod{v(t),w}+\doubleinprod{v(t),w}+b_\Omega(t,v(t),v(t),w)=0
\eeqs
in the distribution sense in $(0,\infty)$ (in fact in $L^{4/3}_{\rm loc}([0,\infty))$), for all $w\in V$, and the energy inequality
\beqs
\frac12|v(t)|^2+\int_{t_0}^t \|v(\tau)\|^2d\tau\le \frac12|v(t_0)|^2
\eeqs
holds for $t_0=0$ and almost all $t_0\in(0,\infty)$, and all $t\ge t_0$.

Other definitions in {\rm (b)} of Definition \ref{lhdef} are extended to the solution $v(t)$.
\end{definition}

Similar to the case $\Omega=0$, we have the following existence, uniqueness and regularity results for equations \eqref{NSO} and \eqref{veq}.

\begin{theorem}\label{typicalthm}
Let $\Omega\ne 0$ be a given number.

\begin{enumerate}[label=\rm(\roman*)]
\item For any $u_0\in H$, there exists a Leray-Hopf weak solution $u(t)$ of \eqref{NSO}, resp. $v(t)$ of \eqref{veq}, with initial data $u_0$.
Moreover, there is $T_0=T_0(u_0)\ge 0$ such that $u$, resp. $v$, is a regular solution on $[T_0,\infty)$.

\item  For any $u_0\in V$, there exists a unique regular solution $u(t)$ of \eqref{NSO}, resp. $v(t)$ of \eqref{veq}, with initial data $u_0$,  on an interval $[0,T)$ for some $T>0$. If $\|u_0\|$ is sufficiently small, then $T=\infty$.

\item For any Leray-Hopf weak solution $u(t)$ of \eqref{NSO}, resp. $v(t)$ of \eqref{veq}, and any number $\sigma>0$, there exists $T_*>0$ such that $u(t)$, resp. $v(t)$, belongs to $G_{1/2,\sigma}$, for all $t\ge T_*$, and the equation \eqref{NSO}, resp. \eqref{veq}, holds in  $\mathcal D(A)$ on $(T_*,\infty)$ with classical time derivative.
\end{enumerate}
\end{theorem}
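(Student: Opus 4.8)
The plan is to reduce both assertions to the classical well-posedness and regularity theory for the standard \NSE\ \eqref{ueqzero}, exploiting two structural facts already recorded above. First, by \eqref{Sper} the additional linear term $\Omega S$ is invisible to every energy-type estimate: since $\inprod{Su,A^\alpha e^{\sigma A^{1/2}}u}=0$ for all $\alpha,\sigma\ge 0$, its contribution vanishes in each inner product used for $H$-, $V$-, $\mD(A)$-, and Gevrey-norm estimates. Second, by \eqref{ESiso} and \eqref{SAcom} the operator $e^{\Omega tS}$ relating $u$ and $v=e^{\Omega tS}u$ is an isometry of $H$, $V$, $\mD(A)$ and of every Gevrey space $G_{\alpha,\sigma}$, commutes with $A$, and is analytic in $t$. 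Accordingly, I would first prove (i)--(iii) for \eqref{NSO} and then transport them to \eqref{veq}.

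For the weak existence in (i) I would run the Galerkin scheme in $P_{\Lambda_m}H$. The energy identity is the only a priori bound required, and there $\Omega\inprod{Su_m,u_m}=0$ by \eqref{Sper} with $\alpha=\sigma=0$; the uniform bounds in $L^\infty_{\rm loc}(H)\cap L^2_{\rm loc}(V)$, with $u_m'$ bounded in $L^{4/3}_{\rm loc}(V')$, are thus identical to the $\Omega=0$ case, and Aubin--Lions compactness yields a limit solution, the bounded term $\Omega\inprod{Su,w}$ passing harmlessly to the limit; the energy inequality follows by weak lower semicontinuity. For the regular theory in (ii) I would test \eqref{NSO} with $Au$; the linear term gives $\Omega\inprod{Su,Au}=0$ by \eqref{Sper} with $\alpha=1$, $\sigma=0$, so the inequality $\frac{d}{dt}\|u\|^2+|Au|^2\le c\|u\|^6$ coincides with that for \eqref{ueqzero}, producing local existence and uniqueness on some $[0,T)$ and global existence once $\|u_0\|$ is below the usual threshold. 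For the eventual regularity claim in (i), the energy inequality forces $\int_0^\infty\|u\|^2\,d\tau<\infty$ (indeed $\|u\|^2\ge\Lambda_1|u|^2=|u|^2$ gives exponential decay of $|u(t)|$), so at almost every large time $t_1$ the value $\|u(t_1)\|$ lies below the threshold; starting the global regular solution at $t_1$ and invoking weak--strong uniqueness---whose difference estimate again sees $\Omega S$ only through the vanishing $\Omega\inprod{S(u-U),u-U}=0$---identifies it with the weak solution, so $T_0=t_1$ works.

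For the Gevrey statement (iii) I would run the Foias--Temam method \cite{FT-Gevrey}, with the refinements of \cite{HM1,HM2}. Starting from a time $T_1$ at which the solution is regular, I would study $t\mapsto|A^{1/2}e^{\beta(t)A^{1/2}}u(t)|^2$ for an increasing profile $\beta$ with $\beta(T_1)=0$. Differentiating and substituting \eqref{NSO}, the linear contribution $\Omega\inprod{Su,Ae^{2\beta A^{1/2}}u}$ vanishes by \eqref{Sper} (with $\alpha=1$, $\sigma=2\beta$), while the nonlinear term obeys the standard Gevrey estimate for $B$; the resulting differential inequality keeps the Gevrey norm finite on a short interval, so $u(t)\in G_{1/2,\sigma}$ for all $t\ge T_*$, and the gained analyticity upgrades \eqref{NSO} to an identity in $\mD(A)$ with classical time derivative.

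To obtain the same conclusions for \eqref{veq} I would set $v=e^{\Omega tS}u$. The isometry and commutation properties transfer all spatial regularity of $u$ to $v$, and differentiating gives $v'=\Omega Sv+e^{\Omega tS}u'$; since $S$ maps $V'$ boundedly into itself (its Fourier multipliers $\widehat P_\veck J\widehat P_\veck$ are uniformly bounded), the time-regularity classes $L^{4/3}_{\rm loc}(V')$, $L^2_{\rm loc}(H)$ and the classical derivative are preserved, so $v$ is a Leray-Hopf, regular, or Gevrey solution of \eqref{veq} exactly when $u$ is the corresponding solution of \eqref{NSO}. I expect the only genuine obstacle to be the organization of step (iii): one must choose the profile $\beta$ and the reference time so that the Gevrey differential inequality closes on a fixed interval and the standard nonlinear Gevrey bound applies; every other step is a transcription of the $\Omega=0$ theory made legitimate by the single identity \eqref{Sper}.
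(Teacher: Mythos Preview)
Your proposal is correct and follows essentially the same approach as the paper, which does not give a detailed proof but simply declares parts (i) and (ii) ``standard'' and says part (iii) ``can be proved by using the same technique of Foias--Temam \cite{FT-Gevrey}, noticing that we have the orthogonality \eqref{Sper}.'' Your outline fleshes out precisely this: Galerkin and energy estimates for (i), the $Au$-test for (ii), and the Foias--Temam Gevrey argument for (iii), in each case invoking \eqref{Sper} to kill the Coriolis contribution, and then transporting everything to $v$ via the isometry $e^{\Omega tS}$.
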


Parts (i) and (ii) of Theorem \ref{typicalthm} are standard. Part (iii) can be proved by using the same technique of Foias-Temam \cite{FT-Gevrey}, noticing that we have the orthogonality \eqref{Sper}. See details of similar calculations in \cite{HM1}, and also more specific statements in Proposition \ref{decay2} below.

The current paper is focused on a different question, namely, the precise long-time dynamics of the solutions of \eqref{NSO} for all $\Omega\ne 0$. Even though they eventually, as $t\to\infty$, go to zero, our goal is to provide a detailed description of such a decay. In the case with the zero spatial average of the solutions, we will show that each solution possesses an asymptotic expansion which is similar to \eqref{exreal}, but contains some oscillating terms that are from the rotation.
The oscillating parts, in this case, are written in terms of sinusoidal functions of time. (See a similar result by Shi \cite{Shi2000} for dissipative wave equations.)
In the general case of non-zero spatial average, similar asymptotic expansions are obtained with the oscillating terms being expressed by the ``double sinusoidal'' functions.

This paper is organized as follows.
In section \ref{expsec}, we introduce our abstract asymptotic expansions, for large time, in terms of oscillating-decaying functions.
Our basic classes of functions are the S-polynomials and SS-polynomials, see Definition \ref{polyS}, below. Fundamental properties of these two classes are studied. In particular, when the forcing term of a linear ordinary differential equation (ODE) is a perturbation of an S-polynomial, then the  ODE's solution can be approximated by S-polynomials, see Lemma \ref{polylem}, below. This lemma turns out to be a building block in our constructions of polynomials in the asymptotic expansions.
In section \ref{zerosec}, we establish the asymptotic expansions for solutions of \eqref{veq} and \eqref{NSO}, below, in Theorems \ref{vthm} and \ref{uthm}, respectively. The expansions are in terms of S-polynomials and exponential functions. It is worth mentioning that these results are established for \emph{all} $\Omega\ne 0$ and for all Leray-Hopf weak solutions.
In particular, it does not rely on Babin-Mahalov-Nicolaenko's global well-posedness result \cite{BMN99b}, which, as mentioned after Definition \ref{lhdef}, requires $\Omega$  to be \emph{sufficiently large} depending on the initial data.
In section \ref{nonzerosec}, we derive the asymptotic expansions for solutions without the zero spatial averages.
This is done by using the specific Galilean-type transformation \eqref{Gtrans}. Unlike the previous section, this yields the expansions in terms of SS-polynomials.
In section \ref{specialsec}, we present some special solutions that form infinite dimensional linear manifolds that are invariant under the flows generated by the solutions of \eqref{NSO}. Especially, Remark \ref{heli} contains a subclass of solutions for which the helicity, a meaningful physical quantity in fluid dynamics \cite{Moffatt69,Moffatt92}, vanishes. The case of non-zero spatial average of the solutions is treated in Theorem \ref{specthm2}, below.

\section{Abstract asymptotic expansions and their properties}
\label{expsec}

We introduce here the classes of functions which will appear in our asymptotic expansions.

\begin{definition}\label{polyS}
Let $X$ be a vector space over the scalar field $\K=\R$ or $\K=\C$.
\begin{enumerate}[label=\rm(\alph*)]
 \item A function $g:\R\to X$ is an $X$-valued S-polynomial if it is a finite sum of the functions in the set
\beqs
\Big \{ t^m \cos(\omega t)Z,\ t^m \sin(\omega t) Z: m\in\N\cup\{0\},\ \omega\in\R, \ Z\in X\Big\}.
\eeqs

\item A function $g:\R\to X$ is an $X$-valued SS-polynomial if it is a finite sum of the functions in the set
\begin{align*}
&\Big\{t^m \cos\big( a\cos(\omega t)+b\sin(\omega t)+ct+d\big)Z,\\
&\quad t^m \sin\big( a\cos(\omega t)+b\sin(\omega t)+ct+d\big)Z:\\
&\quad m\in\N\cup\{0\},\ a,b,c,d,\omega\in \R,\  Z\in X\Big\}.
\end{align*}

\item Denote by $\SP_0(X)$, resp., $\SP_1(X)$ and $\SP_2(X)$ the set of all $X$-valued polynomials, resp., S-polynomials and SS-polynomials.
\end{enumerate}
\end{definition}

Clearly, $\SP_0(X)$, $\SP_1(X)$ and $\SP_2(X)$ are vector spaces over $\K$, and $\SP_0(X)\subset \SP_1(X)\subset \SP_2(X)$.

If $f\in \SP_1(X)$, then we can write $f$ as
\beq\label{fpow}
f(t)=\sum_{n=0}^N t^n f_n(t),
\eeq
where
\beq\label{fcos}
f_n(t)=\sum_{j=1}^{N_n} [a_{n,j}\cos(\omega_{n,j} t)+b_{n,j}\sin(\omega_{n,j}t)],
\eeq
with $N_n\in \N$, $a_{n,j},b_{n,j}\in X$, and $\omega_{n,j}\ge 0$ with the mapping $j\mapsto \omega_{n,j}$ being strictly increasing for each fixed $n$.

\begin{definition}\label{Sexpand}
Let $(X,\|\cdot\|_X)$ be a normed space and  $(\alpha_n)_{n=1}^\infty$ be a  sequence of strictly increasing non-negative real numbers.
Let $\repSP=\SP_0$, $\SP_1$, or $\SP_2$.
A function $f:[T,\infty)\to X$, for some $T\in\R_+$, is said to have an asymptotic expansion
\beq\label{ex1}
f(t)\sim\sum_{n=1}^\infty f_n(t)e^{-\alpha_n t} \quad\text{in } X,
\eeq
where each $f_n$ belongs to $\repSP(X)$, if one has, for any $N\ge 1$, that
\beq\label{ex2}
\Big \|f(t)-\sum_{n=1}^N f_n(t)e^{-\alpha_n t}\Big \|_X=\mathcal O(e^{-(\alpha_N+\varepsilon_N)t}),\text{ as }t\to\infty,
\eeq
for some $\varepsilon_N>0$.
\end{definition}

With this definition, the precise statement of \eqref{exreal} is that it holds, with $\SP(X)=\SP_0(X)$, in the space $X=G_{\alpha,\sigma}$, for all $\alpha,\sigma\ge 0$.

The expansion \eqref{ex1} with $\SP=\SP_1$ is equivalent to the one used in \cite{Shi2000}, for the dissipative wave equations, while with the largest class $\SP=\SP_2$ is new.
For other related asymptotic expansions for solutions of NSE, see \cite{CaH1,CaH2}.

One can observe, same as in \cite[Remark 2.3]{HM2}, that if \eqref{ex2} holds for all $N$ then
\beq\label{ex3}
\Big \|f(t)-\sum_{n=1}^N f_n(t)e^{-\alpha_n t}\Big \|_X=\mathcal O(e^{-\alpha t}),\text{ as }t\to\infty,
\eeq
for all $N$ and all $\alpha\in (\alpha_N,\alpha_{N+1})$.

\begin{lemma}\label{unilem}
 Given $N\in\N$, if $f_1,f_2,\ldots,f_N\in \SP_1(X)$ satisfy \eqref{ex2}, then such  S-polynomials $f_n$'s are unique.
\end{lemma}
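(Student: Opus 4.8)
The plan is to reduce the uniqueness statement to a single linear-independence assertion about S-polynomials. Suppose $(f_1,\dots,f_N)$ and $(g_1,\dots,g_N)$ are two tuples in $\SP_1(X)$, each satisfying \eqref{ex2} for the given $f$. Setting $h_n=f_n-g_n\in\SP_1(X)$ (using that $\SP_1(X)$ is a vector space), subtracting the two estimates via the triangle inequality, and taking $\varepsilon=\min$ of the two exponents, I obtain
\[
\Big\|\sum_{n=1}^N h_n(t)e^{-\alpha_n t}\Big\|_X=\mathcal O\big(e^{-(\alpha_N+\varepsilon)t}\big),\qquad t\to\infty,
\]
for some $\varepsilon>0$. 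It therefore suffices to prove: if $h_1,\dots,h_N\in\SP_1(X)$ satisfy this bound with $\alpha_1<\cdots<\alpha_N$, then $h_n\equiv0$ for every $n$.

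The technical core is a sub-claim I would isolate first: an $X$-valued S-polynomial $p$ with $\|p(t)\|_X\to0$ as $t\to\infty$ must vanish identically. To prove it, write $p(t)=\sum_{m=0}^{M}t^m p_m(t)$ as in \eqref{fpow}--\eqref{fcos}, where each $p_m$ is an $X$-valued trigonometric polynomial, and suppose $p_M\not\equiv0$ with $M$ maximal. Dividing by $t^M$ and letting $t\to\infty$ kills the lower-order terms (they carry negative powers $t^{m-M}$ against bounded oscillations) as well as $p(t)/t^M$ itself, forcing $\|p_M(t)\|_X\to0$. A nonzero trigonometric polynomial, however, cannot decay: composing with an arbitrary $\phi\in X^*$ (and using $|\phi(p_M(t))|\le\|\phi\|\,\|p_M(t)\|_X$) reduces this to the scalar statement that $\sum_j[\phi(a_{M,j})\cos(\omega_{M,j}t)+\phi(b_{M,j})\sin(\omega_{M,j}t)]$, with distinct frequencies, tends to $0$ only if all its coefficients vanish; each coefficient is recovered as a time-average, e.g. $\lim_{T\to\infty}\frac{2}{T}\int_0^T(\cdot)\cos(\omega_{M,j}t)\,dt$ (or the corresponding $\sin$, or the mean for the zero frequency), and the limit is $0$ since the integrand has a factor tending to $0$. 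Hahn--Banach then gives $a_{M,j}=b_{M,j}=0$, contradicting $p_M\not\equiv0$.

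With the sub-claim in hand, I would finish by peeling off the exponentials one at a time. Multiplying the displayed bound by $e^{\alpha_1 t}$ gives
\[
\Big\|h_1(t)+\sum_{n=2}^N h_n(t)e^{-(\alpha_n-\alpha_1)t}\Big\|_X=\mathcal O\big(e^{-(\alpha_N+\varepsilon-\alpha_1)t}\big).
\]
Since each $h_n$ grows at most polynomially (the trigonometric factors being bounded) while $\alpha_n-\alpha_1>0$ for $n\ge2$, the whole sum over $n\ge2$ tends to $0$, and so does the right-hand side; hence $\|h_1(t)\|_X\to0$ and the sub-claim yields $h_1\equiv0$. The bound then reduces to $\|\sum_{n=2}^N h_n(t)e^{-\alpha_n t}\|_X=\mathcal O(e^{-(\alpha_N+\varepsilon)t})$, and repeating the argument (formally, an induction on the number of surviving terms) gives $h_2\equiv0,\dots,h_N\equiv0$. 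Thus $f_n=g_n$ for all $n$, which is the desired uniqueness.

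The step I expect to be the main obstacle is the sub-claim, specifically ruling out decay of a nontrivial trigonometric polynomial in the vector-valued setting and cleanly separating the polynomial weights $t^m$ from the almost-periodic factors; once the coefficient extraction by averaging (reduced to scalars via $X^*$) is set up, the peeling induction is routine.
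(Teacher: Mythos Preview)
Your argument is correct and shares the paper's overall skeleton: subtract the two expansions, peel off the smallest exponential, and reduce to showing that an $X$-valued trigonometric polynomial that tends to $0$ in norm must vanish. The genuine difference is in how that last sub-claim is proved. You pass to scalars via $\phi\in X^*$, extract each coefficient as a Bohr mean $\lim_{T\to\infty}\frac{2}{T}\int_0^T(\cdot)\cos(\omega t)\,dt$, and invoke Hahn--Banach to conclude the vector coefficients are zero. The paper instead avoids the dual altogether: since only finitely many vectors $a_{n},b_{n}$ appear, it works in their finite-dimensional span $Y$, uses equivalence of $\|\cdot\|_X$ and a coordinate norm on $Y$ to reduce to scalar components, and then proves the scalar statement (its Lemma~\ref{uniR}) by an induction that integrates over one period of the largest frequency to kill that term. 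Your route is shorter and more recognizable to anyone used to almost-periodic functions; the paper's route is more elementary in that it needs no functional analysis beyond norm equivalence in finite dimensions, and its Lemma~\ref{uniR} is self-contained. Either approach settles the sub-claim, after which the peeling induction is identical.
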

\begin{proof}
Let $g_1,g_2,\ldots,g_N$ be functions in $\SP_1(X)$ that satisfy
\beqs
\Big \|f(t)-\sum_{n=1}^N g_n(t)e^{-\alpha_n t}\Big \|_X=\mathcal O(e^{-(\alpha_N+\varepsilon_N')t}),\text{ as }t\to\infty\text{ for some }\varep_N'>0.
\eeqs

Let $h_n=f_n-g_n$. By the triangle inequality, we have
\beqs
\Big \|\sum_{n=1}^N h_n(t)e^{-\alpha_n t}\Big \|_X
\le \Big \|f(t)-\sum_{n=1}^N f_n(t)e^{-\alpha_n t}\Big \|_X+\Big \|f(t)-\sum_{n=1}^N g_n(t)e^{-\alpha_n t}\Big \|_X,
\eeqs
hence
\beq\label{exh}
\Big \|\sum_{n=1}^N h_n(t)e^{-\alpha_n t}\Big \|_X=\mathcal O(e^{-(\alpha_N+\varepsilon)t}),\text{ as }t\to\infty,\text{ for }\varep=\min\{\varep_N,\varep_N'\}>0.
\eeq

Suppose that not all $h_n$'s are zero functions. Let $n_0$ be the smallest number such that $h_{n_0}\ne 0$.
By multiplying \eqref{exh} with $e^{\alpha_{n_0} t}$, we deduce
\beq \label{hnot}
\|h_{n_0}(t)\|_X=\mathcal O(e^{-\varepsilon' t}), \text{ for some }\varep'>0.
\eeq

We write $h_{n_0}(t)$ in the form of \eqref{fpow}.
Suppose the highest order term (with respect to the power of $t$) of $h_{n_0}(t)$ is $t^d z(t)$,
where $d$ is a non-negative integer, and $z$ is a non-zero function of the form as in the RHS of \eqref{fcos}.
Then dividing \eqref{hnot} by $t^d$ yields
\beq \label{zX}
\lim_{t\to\infty}\|z(t)\|_X=0.
\eeq

Suppose
\beq\label{zo}
z(t)=\sum_{n=1}^{N_d} [a_n\cos(\omega_n t)+b_n\sin(\omega_n t)],
\eeq
where $N_d\in\N$, $a_n,b_n\in X$ and $\omega_n\in\R$, for $1\le n\le N_d$.

Let $Y$ be the vector space spanned by $\{a_n,b_n\in X: 1\le n\le N_d \}$.
Let $\mathcal Y=\{Y_j: 1\le j\le m\}$ be a basis of $Y$.
By representing the vectors $a_n$'s and $b_n$'s in basis $\mathcal Y$, we can rewrite $z(t)$ as
\beqs
z(t)=\sum_{j=1}^m z_j(t) Y_j,
\eeqs
where $m\in\N$, each $z_j:\R\to \K$, for $1\le j\le m$, is a linear combination of the functions $\cos(\omega_n t)$ and $\sin(\omega_n t)$, for $1\le n\le N_d$, in \eqref{zo}.

For $y=\sum_{j=1}^m y_j Y_j\in Y$, with $y_j\in\K$ for $1\le j\le m$, define the norm
\beqs
\|y\|_Y=\Big(\sum_{j=1}^m |y_j|^2\Big)^{1/2}.
\eeqs

On the finite dimensional space $Y$, the norms $\|\cdot\|_X$ and $\|\cdot\|_Y$ are equivalent. Therefore, \eqref{zX} gives
\beqs
\lim_{t\to\infty}\|z(t)\|_Y=0, \text{ which implies }
\lim_{t\to\infty} z_j(t)=0, \text{ for } 1\le j\le m.
\eeqs

By Lemma \ref{uniR}, we obtain $z_j=0$ for all $j$. Hence $z=0$, which leads to a contradiction.
Thus, $h_n=0$, for all $n=1,2,\ldots,N$.  We conclude $f_n=g_n$, for $1\le n \le N$. The proof is complete.
\end{proof}

\begin{remark}\label{uniG}
We discuss a consequence of Lemma \ref{unilem}. Suppose \eqref{ex2} is satisfied with $X=G_{\beta_i,\sigma_i}$  and $f_n=g_n^{(i)}\in \SP_1(X)$, for $i=1,2$ and $1\le n\le N$.
Let $\bar\beta=\min\{\beta_1,\beta_2\}$ and $\bar\sigma=\min\{\sigma_1,\sigma_2\}$. Then \eqref{ex2} is satisfied with both $f_n=g_n^{(1)}$ and $f_n=g_n^{(2)}$ on the same space $X=G_{\bar\beta,\bar\sigma}$. By the virtue of Lemma \ref{unilem}, we have $g_n^{(1)}=g_n^{(2)}$, for $1\le n\le N$.
\end{remark}

\subsection{Properties of S-polynomials and SS-polynomials}
\label{ss21}

We start this subsection with some elementary properties of the functions introduced in Definition \ref{polyS} above.

\begin{lemma}\label{Spolyinvar}
Let $X$ and $\K$ be as in Definition \ref{polyS}, and $\repSP(X)=\SP_1(X)$ or $\repSP(X)=\SP_2(X)$.
Let $f$ be any function in $\repSP(X)$.
\begin{enumerate}[label={\rm(\roman*)}]
\item  The functions $t\mapsto f(T+t)$ and $t\mapsto f(k t)$ belong to $\repSP(X)$ for any numbers $T,k\in\R$.

\item If $g\in  \SP_1(\K)$, then $gf\in \repSP(X)$.

\item In case $X$ is a subspace of $\C^n$, with  $\K=\C$, then $e^{i\omega t}f\in \repSP(X).$

\item Suppose $Y$ is another vector space over $\K$ and $L$ is a linear mapping from $X$ to $Y$. Then
$Lf\in \repSP(Y)$.
\end{enumerate}
\end{lemma}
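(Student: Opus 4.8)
The plan is to verify each of the four assertions by reducing to the generating functions that define the classes $\SP_1(X)$ and $\SP_2(X)$. Since each class is a vector space and $f$ is by definition a finite sum of the basic generators, it suffices in every case to check the claim for a single generator $t^m\cos(\omega t)Z$ or $t^m\sin(\omega t)Z$ (and, for $\SP_2$, the ``double sinusoidal'' generators), then extend by linearity. Throughout I will use the elementary addition formulas for $\cos$ and $\sin$, which keep the outer sinusoidal structure intact.

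For part (i), consider first the time shift $t\mapsto f(T+t)$. For a generator $t^m\cos(\omega t)Z$ this becomes $(T+t)^m\cos(\omega(T+t))Z$; expanding $(T+t)^m$ by the binomial theorem produces a $\K$-linear combination of powers $t^j$ with $0\le j\le m$, while $\cos(\omega T+\omega t)=\cos(\omega T)\cos(\omega t)-\sin(\omega T)\sin(\omega t)$ keeps the frequency $\omega$ and merely redistributes the constant coefficients, so the result is again a finite sum of generators of the same type. The scaling $t\mapsto f(kt)$ is even simpler: $t^m\cos(\omega t)Z\mapsto k^m t^m\cos(k\omega t)Z$, which only rescales the coefficient and the frequency. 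The identical computation with $\sin$ in place of $\cos$, and the analogous (slightly longer) expansion for the SS-generators using $a\cos(\omega(T+t))+b\sin(\omega(T+t))+c(T+t)+d$, settles part (i) for $\repSP=\SP_2$ as well; here one observes that a shift or scaling maps an expression of the form $a\cos(\omega t)+b\sin(\omega t)+ct+d$ back into the same form.

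For part (ii), write $g\in\SP_1(\K)$ as a finite sum of scalar generators $t^p\cos(\eta t)$ and $t^p\sin(\eta t)$; then $gf$ is a finite sum of products of such a scalar generator with a generator of $f$. A typical product $t^p\cos(\eta t)\cdot t^m\cos(\omega t)Z$ equals $t^{p+m}\cos(\eta t)\cos(\omega t)Z$, and the product-to-sum identity $\cos(\eta t)\cos(\omega t)=\tfrac12[\cos((\eta-\omega)t)+\cos((\eta+\omega)t)]$ (with the analogous identities for the other three sine/cosine pairings) rewrites this as a $\K$-linear combination of generators $t^{p+m}\cos((\eta\pm\omega)t)Z$ and $t^{p+m}\sin((\eta\pm\omega)t)Z$, i.e.\ again an element of $\SP_1(X)$. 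For the case $\repSP=\SP_2$ one multiplies a scalar $\SP_1$-generator against an SS-generator; the product-to-sum step now applies only to the scalar sinusoid $\cos(\eta t)$, leaving the inner double-sinusoidal argument untouched, so the product stays in $\SP_2(X)$. Part (iii) is the complex analogue: writing $e^{i\omega t}=\cos(\omega t)+i\sin(\omega t)$ exhibits $e^{i\omega t}$ as an element of $\SP_1(\C)$, so part (iii) is an immediate special case of part (ii) once one notes that $\K=\C$ and $X\subseteq\C^n$ permit the scalar factor $i$. Part (iv) is the most transparent: applying a $\K$-linear map $L$ to a generator gives $L(t^m\cos(\omega t)Z)=t^m\cos(\omega t)(LZ)$, which is a generator of $\SP_1(Y)$ with coefficient $LZ\in Y$; linearity of $L$ then carries finite sums to finite sums, and the same applies verbatim to the SS-generators, establishing the claim for both classes.

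I do not expect a genuine obstacle here, as all four parts are bookkeeping with trigonometric identities; the only point demanding a little care is ensuring that in part (ii) the product-to-sum expansion in the $\SP_2$ case is applied to the \emph{outer} scalar sinusoid coming from $g$ and never to the inner argument $a\cos(\omega t)+b\sin(\omega t)+ct+d$ of an SS-generator, since multiplying two such inner arguments would \emph{not} stay within the SS-class. Keeping the factor $g$ restricted to $\SP_1(\K)$ is exactly what makes the computation close, and this is the subtlety worth flagging explicitly in the write-up.
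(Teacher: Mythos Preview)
Your proposal is correct and follows essentially the same route as the paper's proof: the paper simply records that parts (i) and (ii) are checked via the sine/cosine addition formulas and product-to-sum identities, that part (iii) follows from (ii) by writing $e^{i\omega t}=\cos(\omega t)+i\sin(\omega t)\in\SP_1(\C)$, and that part (iv) is obvious. Your write-up is a faithful, more detailed unpacking of exactly these steps, and your explicit remark about applying product-to-sum only to the outer scalar factor from $g$ (so that the inner argument of an SS-generator is merely shifted by a linear function of $t$) correctly pinpoints why the hypothesis $g\in\SP_1(\K)$ is the right one.
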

\begin{proof}
Parts (i) and (ii) can be easily verified by using elementary trigonometric identities such as the sine and cosine of a sum, and the product to sum formulas.
Part (iii) is obtained by applying part (ii) to the function $g(t)=e^{i\omega t}$ which, in fact, belongs to $\SP_1(\C)$.
Part (iv) is obvious.
\end{proof}

\begin{lemma}\label{SPlem1}
Let $\repSP=\SP_1$ or $\repSP=\SP_2$.
Then, a function $f$ belongs to $\repSP(\mathcal V)$ if and only if
\beq \label{fsum}
f(t)=\sump_{\text{\rm finitely many $\veck$}} \vecf_\veck(t)e^{i\widecheck\veck\cdot\vecx}, \text{ with each}\quad \vecf_\veck\in \repSP(X_\veck), \text{ and }
\vecf_{-\veck}=\overline{\vecf_\veck},
\eeq
where $X_\veck$ is as in \eqref{Xk}.
\end{lemma}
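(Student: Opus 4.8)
The plan is to prove both implications directly from Definition~\ref{polyS} by expanding the Fourier series of $f(t)$ and matching coefficients. Throughout, $\mathcal V$ is the space of zero-average, divergence-free trigonometric polynomials, so any $\mathcal V$-valued function $f(t)$ automatically has a finite Fourier expansion $f(t)=\sump \vecf_\veck(t)e^{i\widecheck\veck\cdot\vecx}$, where the sum is over finitely many $\veck$, each coefficient satisfies $\vecf_\veck(t)\in X_\veck$ (the divergence-free condition), and the reality condition forces $\vecf_{-\veck}(t)=\overline{\vecf_\veck(t)}$. The content of the lemma is that the \emph{time-dependence} of $f$ lives in $\repSP$ if and only if each coefficient $\vecf_\veck$ does.

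For the ``if'' direction, suppose $f$ has the form \eqref{fsum} with each $\vecf_\veck\in\repSP(X_\veck)$. For each fixed $\veck$, the map $L_\veck\colon X_\veck\to\mathcal V$ sending $\vecz\mapsto \vecz\,e^{i\widecheck\veck\cdot\vecx}$ is linear, so by part (iv) of Lemma~\ref{Spolyinvar} the function $t\mapsto \vecf_\veck(t)e^{i\widecheck\veck\cdot\vecx}$ belongs to $\repSP(\mathcal V)$. Since $\repSP(\mathcal V)$ is a vector space and the sum in \eqref{fsum} is finite, $f=\sump \vecf_\veck(\cdot)e^{i\widecheck\veck\cdot\vecx}$ lies in $\repSP(\mathcal V)$. (Strictly, one should observe that $\vecf_\veck\in\repSP(X_\veck)$ viewed inside the ambient $\C^3$ is still an $\repSP$-function valued in $X_\veck\subset\mathcal V$ after applying $L_\veck$; this is exactly what part (iv) delivers.)

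For the ``only if'' direction, suppose $f\in\repSP(\mathcal V)$. By Definition~\ref{polyS}, $f$ is a finite sum of terms of the form $t^m\cos(\theta(t))Z$ and $t^m\sin(\theta(t))Z$ with $Z\in\mathcal V$, where $\theta(t)=\omega t$ in the S-polynomial case and $\theta(t)=a\cos(\omega t)+b\sin(\omega t)+ct+d$ in the SS-polynomial case. Each such $Z\in\mathcal V$ has its own finite Fourier decomposition $Z=\sump \widehat Z_\veck e^{i\widecheck\veck\cdot\vecx}$ with $\widehat Z_\veck\in X_\veck$; substituting this and collecting, by linearity, all contributions to a fixed frequency $\veck$, one obtains $\vecf_\veck(t)$ as a finite sum of terms $t^m\cos(\theta(t))\widehat Z_\veck$ and $t^m\sin(\theta(t))\widehat Z_\veck$, which is precisely an element of $\repSP(X_\veck)$. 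The reality and divergence-free conditions on $\vecf_\veck$ are inherited from those on $\mathcal V$, giving \eqref{fsum}.

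I do not expect a genuine obstacle here; the only point requiring a little care is purely \emph{bookkeeping}: one must justify that grouping the finitely many generating terms of $f$ by their spatial frequency $\veck$ and reading off the coefficient function $\vecf_\veck$ is legitimate, i.e. that the Fourier coefficients of a finite $\repSP(\mathcal V)$-combination are the corresponding $\repSP(X_\veck)$-combinations of coefficients. This is immediate from the uniqueness of Fourier coefficients together with the fact that $\repSP(X_\veck)$ is closed under finite linear combinations, so the argument is routine once the reduction via Lemma~\ref{Spolyinvar}(iv) is in place.
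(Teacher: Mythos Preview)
Your ``only if'' direction is essentially the paper's argument: expand each $Z\in\mathcal V$ appearing in the generating terms of $f$ into its finite Fourier series and collect by wave vector $\veck$. Fine.

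The gap is in your ``if'' direction. The map $L_\veck\colon X_\veck\to\mathcal V$, $\vecz\mapsto\vecz\,e^{i\widecheck\veck\cdot\vecx}$, does \emph{not} land in $\mathcal V$: the space $X_\veck\subset\C^3$ is complex, so $\vecz\,e^{i\widecheck\veck\cdot\vecx}$ is generally a complex-valued field, whereas $\mathcal V$ consists of \emph{real} trigonometric polynomials (the reality condition $\widehat\vecu_{-\veck}=\overline{\widehat\vecu_\veck}$ is part of its definition). Moreover, Lemma~\ref{Spolyinvar}(iv) requires $X$ and $Y$ to be over the same scalar field, so you cannot invoke it for a $\C$-linear map into the real space $\mathcal V$.

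The fix, which is what the paper does, is to pair $\veck$ with $-\veck$: using the hypothesis $\vecf_{-\veck}=\overline{\vecf_\veck}$, consider $F_\veck(t)=\vecf_\veck(t)e^{i\widecheck\veck\cdot\vecx}+\overline{\vecf_\veck(t)}e^{-i\widecheck\veck\cdot\vecx}$. If $\vecf_\veck$ contains a generating term $t^m\cos(\theta(t))\vecz$ with $\vecz\in X_\veck$, then $F_\veck$ contains $t^m\cos(\theta(t))\big(\vecz\,e^{i\widecheck\veck\cdot\vecx}+\bar\vecz\,e^{-i\widecheck\veck\cdot\vecx}\big)$, and now $\vecz\,e^{i\widecheck\veck\cdot\vecx}+\bar\vecz\,e^{-i\widecheck\veck\cdot\vecx}$ genuinely lies in $\mathcal V$. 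Summing over the finitely many generating terms and over the distinct pairs $\{\veck,-\veck\}$ gives $f\in\repSP(\mathcal V)$.
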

\begin{proof}
We prove for the case $\repSP=\SP_1$. The arguments for the other case $\repSP=\SP_2$ are similar and omitted.

Suppose $f\in \SP_1(\mathcal V)$. We write
\beqs
f(t)=\sum_{j=1}^N t^{m_j}[a_j \cos(\omega_j t)+b_j\sin(\omega_j t)]u_j, \text{ each } a_j,b_j\in\R, \ u_j\in\mathcal V.
\eeqs
By writing the finite Fourier series of each $u_j$ and combining the coefficients for $e^{i\widecheck\veck\cdot\vecx}$,
we find that the Fourier series of $f$ is of the form as in \eqref{fsum} with each $\vecf_\veck(t)$ being a finite sum of
$t^{m_j}[a_j \cos(\omega_j t)+b_j\sin(\omega_j t)] \vecz$ for some $\vecz\in X_\veck$. Thus, $\vecf_\veck\in \SP_1(X_\veck)$.
The last relation in \eqref{fsum} is the standard condition for $f$ to be real-valued.

Now, suppose $f$ is as in \eqref{fsum}. For each $\veck$, consider the function
$$F_\veck(t)=\vecf_\veck(t)e^{i\widecheck\veck\cdot\vecx}+\vecf_{-\veck}(t)e^{-i\widecheck\veck\cdot\vecx}
=\vecf_\veck(t)e^{i\widecheck\veck\cdot\vecx}+\overline{\vecf_\veck(t)}e^{-i\widecheck\veck\cdot\vecx}.$$

If  $\vecf_\veck(t)$ contains $t^m\cos(\omega t)\vecz$ for some $\vecz\in X_\veck$, then
$F_\veck(t)$ contains
 \beq \label{aterm}
 t^m\cos(\omega t) (\vecz e^{i\widecheck\veck\cdot\vecx} + \bar \vecz e^{-i\widecheck\veck\cdot\vecx}).
 \eeq
Since $\vecz e^{i\widecheck\veck\cdot\vecx} + \bar \vecz e^{-i\widecheck\veck\cdot\vecx}\in\mathcal V$, the function in \eqref{aterm} belongs to $\SP_1(\mathcal V)$.
Similar property holds for $\sin(\omega t)$ replacing $\cos(\omega t)$, and we obtain $F_\veck\in \SP_1(\mathcal V)$.
Then $f$ being a finite sum of such $F_\veck$'s yields $f\in \SP_1(\mathcal V)$.
\end{proof}

The following are important properties relating the S- and SS- polynomials with the rotation and nonlinear terms in the rotational NSE.

\begin{lemma}\label{SPVlem}
Let  $\Lambda\in \sigma(A)$, two functions $f,g\in \SP_1(P_\Lambda H)$, and $\Omega\in\R$.
Then
\beq\label{Sf1}
e^{\Omega tS}f(t)\in \SP_1(P_\Lambda H),\eeq
\beq\label{Sf2}
B(f(t),g(t))\in \SP_1(P_{4\Lambda} H),
\eeq
\beq\label{Sf3}
B_\Omega(t,f(t),g(t))\in \SP_1(P_{4\Lambda}H).
\eeq
\end{lemma}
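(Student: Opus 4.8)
The plan is to establish the three claims in order, using the explicit Fourier-series descriptions of $e^{\Omega t S}$, $B$, and $B_\Omega$ together with the structural characterization of S-polynomials in $\mathcal V$ from Lemma \ref{SPlem1}. The guiding principle throughout is that $P_\Lambda H$ is finite dimensional, spanned by finitely many Fourier modes $e^{i\widecheck\veck\cdot\vecx}$ with $|\widecheck\veck|^2\le \Lambda$, so each of $f,g$ can be written as a finite Fourier sum $f(t)=\sum' \vecf_\veck(t)e^{i\widecheck\veck\cdot\vecx}$ with $\vecf_\veck\in\SP_1(X_\veck)$ (via Lemma \ref{SPlem1}), the sum ranging only over $\veck$ with $|\widecheck\veck|^2\le\Lambda$.

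For \eqref{Sf1}, I would apply the explicit formula \eqref{etS}: since $e^{\Omega t S}$ acts on the $\veck$-mode by the matrix $E_\veck(\widetilde k_3\Omega t)=\cos(\widetilde k_3\Omega t)I_3+\sin(\widetilde k_3\Omega t)J_\veck$, the coefficient $\vecf_\veck(t)$ is replaced by $E_\veck(\widetilde k_3\Omega t)\vecf_\veck(t)$. This is a product of an $\SP_1(\K)$ scalar function (namely $\cos(\widetilde k_3\Omega t)$ or $\sin(\widetilde k_3\Omega t)$) with an element of $\SP_1(X_\veck)$, composed with the fixed linear maps $I_3$ and $J_\veck$; by parts (ii) and (iv) of Lemma \ref{Spolyinvar} the result again lies in $\SP_1(X_\veck)$. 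Since $e^{\Omega t S}$ preserves each Fourier mode, it does not change the frequency set $\{\veck:|\widecheck\veck|^2\le\Lambda\}$, so $e^{\Omega t S}f(t)$ stays in $\SP_1(P_\Lambda H)$ by the converse direction of Lemma \ref{SPlem1}.

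For \eqref{Sf2}, I would use the Fourier formulas \eqref{unabv}--\eqref{BF}: the $\veck$-th coefficient of $B(f(t),g(t))$ is $\widehat P_\veck\sum_{\widecheck\vecm+\widecheck\vecj=\widecheck\veck} i(\widehat{\vecf}_\vecm(t)\cdot\widecheck\veck)\widehat{\vecg}_\vecj(t)$, a finite sum of products of two S-polynomial coefficients. Each summand $i(\widehat{\vecf}_\vecm\cdot\widecheck\veck)\widehat{\vecg}_\vecj$ is (scalar S-polynomial)$\times$(vector S-polynomial), hence in $\SP_1(\C^3)$ by Lemma \ref{Spolyinvar}(ii), and applying the fixed matrix $\widehat P_\veck$ keeps it in $\SP_1(X_\veck)$ by part (iv). The frequency bookkeeping is the key point: a nonzero contribution requires $\widecheck\vecm+\widecheck\vecj=\widecheck\veck$ with $|\widecheck\vecm|^2,|\widecheck\vecj|^2\le\Lambda$, whence $|\widecheck\veck|\le|\widecheck\vecm|+|\widecheck\vecj|\le 2\sqrt\Lambda$ and so $|\widecheck\veck|^2\le 4\Lambda$; this is exactly why the target space is $P_{4\Lambda}H$. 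Thus $B(f(t),g(t))\in\SP_1(P_{4\Lambda}H)$. Finally, \eqref{Sf3} follows by combining the first two parts: by definition $B_\Omega(t,f,g)=e^{\Omega t S}B(e^{-\Omega t S}f,e^{-\Omega t S}g)$ (formula \eqref{defBtuv}), and applying \eqref{Sf1} twice (with $-\Omega$ and $\Omega$) around \eqref{Sf2} keeps everything in $\SP_1(P_{4\Lambda}H)$.

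I expect the main obstacle to be the bookkeeping in part \eqref{Sf2}, specifically verifying cleanly that the convolution over $\widecheck\vecm+\widecheck\vecj=\widecheck\veck$ produces only finitely many frequencies all bounded by $4\Lambda$, and confirming that the resulting coefficient still satisfies the reality condition $\vecf_{-\veck}=\overline{\vecf_\veck}$ required by Lemma \ref{SPlem1}. The reality condition should transfer automatically because $f,g$ are $\mathcal V$-valued (hence real), but it is worth checking that the product structure respects it; once this is in hand, the rest reduces to the routine closure properties already packaged in Lemma \ref{Spolyinvar}.
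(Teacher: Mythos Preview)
Your proposal is correct and follows essentially the same approach as the paper's proof: both use Lemma~\ref{SPlem1} to pass to finite Fourier sums, apply the explicit formulas \eqref{etS} and \eqref{BF} mode-by-mode together with the closure properties of Lemma~\ref{Spolyinvar}, and deduce \eqref{Sf3} from \eqref{Sf1} and \eqref{Sf2} via \eqref{defBtuv}. The only cosmetic differences are that the paper bounds $|\widecheck\veck|^2$ via $|\widecheck\vecm+\widecheck\vecj|^2\le 2(|\widecheck\vecm|^2+|\widecheck\vecj|^2)$ rather than the triangle inequality, and explicitly checks the reality condition $\mathbf B_{-\veck}=\overline{\mathbf B_\veck}$ by pairing $(\vecm,\vecj)$ with $(-\vecm,-\vecj)$---exactly the verification you flagged as the main obstacle.
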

\begin{proof}
(a) By Lemma \ref{SPlem1}, we can write $f(t)$ as
\beq \label{ft}
f(t)=\sump_{|\widecheck \veck|^2\le \Lambda} \vecf_\veck(t)e^{i\widecheck\veck\cdot\vecx}, \text{ with each } \vecf_\veck\in \SP_1(X_\veck), \text{ and }
\vecf_{-\veck}=\overline{\vecf_\veck}.
\eeq

Applying \eqref{etS} yields
\beq\label{eOf}
e^{\Omega t S} f(t)=\sump_{|\widecheck k|^2\le \Lambda} E_\veck(\Omega \widetilde k_3 t)\vecf_\veck(t) e^{i\widecheck\veck\cdot\vecx}.
\eeq
Applying Lemma \ref{Spolyinvar} to $\repSP=\SP_1$, $g(t):=\cos(\Omega \widetilde k_3 t)$ and then $g(t):=\sin(\Omega \widetilde k_3 t)$, one has that each $E_\veck(\Omega \widetilde k_3 t)\vecf_\veck(t)$ belongs to $\SP_1(X_\veck)$. Then by the virtue of the sufficient condition in Lemma \ref{SPlem1},  we have $e^{\Omega t S} f(t)\in \SP_1(\mathcal V)$. This and the restriction $|\widecheck\veck|^2\le \Lambda$ in \eqref{eOf} give \eqref{Sf1}.

(b) We prove \eqref{Sf2}. By Lemma \ref{SPlem1} again, we can assume, in addition to \eqref{ft}, that
\beqs
g(t)=\sump_{|\widecheck \veck|^2\le \Lambda} \vecg_\veck(t)e^{i\widecheck\veck\cdot\vecx}, \text{ with each } \vecg_\veck\in \SP_1(X_\veck), \text{ and }
\vecg_{-\veck}=\overline{\vecg_\veck}.
\eeqs

By \eqref{BF}, we have
\beq \label{Btsum}
B(f(t),g(t))=\sump \mathbf B_\veck(t) e^{i \widecheck \veck\cdot\vecx},
\eeq
where
\beq\label{Bk}
\mathbf B_\veck (t)
= \sum_{\stackrel{0<|\widecheck\vecm|^2,|\widecheck\vecj|^2\le \Lambda,}{\widecheck \vecm+\widecheck \vecj=\widecheck \veck}}
 (\vecf_\vecm(t)\cdot i\, \widecheck\veck)  \widehat P_\veck \vecg_\vecj(t).
\eeq

Thanks to formula \eqref{Bk} for $\mathbf B_\veck$, we only need to sum over $\veck$ in \eqref{Btsum} with
\beqs
|\widecheck \veck|^2=|\widecheck \vecm+\widecheck \vecj|^2\le 2(|\widecheck \vecm|^2+|\widecheck \vecj|^2)\le 4\Lambda.
\eeqs
Thus, $B(f(t),g(t))\in P_{4\Lambda}H$ for all $t\in\R$.

Note that $\widehat P_\veck =\widehat P_{-\veck} $.
In the sum \eqref{Bk}, we will pair $\widecheck \vecm+\widecheck \vecj=\widecheck \veck$ for $\mathbf B_\veck(t)$ with $(-\widecheck \vecm)+(-\widecheck \vecj)=(-\widecheck \veck)$ for $\mathbf B_{-\veck}(t)$.
Because $\big(\vecf_\vecm\cdot i\, \widecheck\veck\big) \widehat P_\veck\vecg_\vecj$, for $\mathbf B_\veck(t)$,
and
$ \big(\vecf_{-\vecm}\cdot  (-i\,\widecheck\veck)\big) \widehat P_{-\veck}\vecg_{-\vecj}$, for $\mathbf B_{-\veck}(t)$,
are conjugates of each other,  so are  $\mathbf B_\veck(t)$ and $\mathbf B_{-\veck}(t)$.

Using Lemma \ref{Spolyinvar} (ii) and (iv), one can verify that
$\big(\vecf_\vecm\cdot i\, \widecheck\veck\big) \widehat P_\veck \vecg_\vecj$ belongs to $\SP_1(X_\veck)$,
hence  $\mathbf B_\veck(t)\in \SP_1(X_\veck)$.

 By combining the above facts with Lemma \ref{SPlem1}, we conclude \eqref{Sf2}.

(c) Next, we prove \eqref{Sf3}. Because $f(t),g(t)\in \SP_1(P_\Lambda H)$, we apply \eqref{Sf1} to have $e^{-\Omega tS}f(t)$ and $e^{-\Omega tS}g(t)$ belong to $\SP_1(P_\Lambda H)$,
 which, by \eqref{Sf2}, imply
 $$B(e^{-\Omega tS}f(t),e^{-\Omega tS}g(t))\in \SP_1(P_{4\Lambda} H),$$
 which, in turn, thanks to \eqref{Sf1} again, further implies
 $$e^{\Omega tS} B(e^{-\Omega tS}f(t),e^{-\Omega tS}g(t))\in \SP_1(P_{4\Lambda} H).$$
Therefore, thanks also to \eqref{defBtuv}, we conclude \eqref{Sf3}.
 \end{proof}

\subsection{Approximating solutions of certain linear ODEs with S-polynomials}
\label{ss22}

In our proofs, we often need the following integrals.

\begin{lemma}\label{intlem}
Let $\alpha,\omega\in\R$ with $\alpha^2+\omega^2>0$, and $m$ be a non-negative integer.
Then each integral
 \beq\label{tei}
 \int t^m e^{\alpha t}\cos(\omega t) dt,\quad  \int t^m e^{\alpha t}\sin(\omega t) dt
 \eeq
is of the form $$p(t)e^{\alpha t}\cos(\omega t)+q(t) e^{\alpha t}\sin(\omega t)+const.,$$
where $p(t)$ and $q(t)$ are polynomials of degrees at most $m$.
\end{lemma}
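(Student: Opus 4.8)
The plan is to reduce both integrals in \eqref{tei} to a single complex one by complexification, and then to establish the underlying polynomial antiderivative identity by induction on $m$ via integration by parts.

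First I would set $\beta=\alpha+i\omega$ and note that the hypothesis $\alpha^2+\omega^2>0$ says precisely that $\beta\ne 0$. Since $e^{\beta t}=e^{\alpha t}(\cos(\omega t)+i\sin(\omega t))$, the two integrals in \eqref{tei} are exactly the real and imaginary parts of $\int t^m e^{\beta t}\,dt$, so it suffices to understand this single complex integral. Next I would prove, by induction on $m$, that
\[
\int t^m e^{\beta t}\,dt = P_m(t)e^{\beta t}+\text{const},
\]
for some polynomial $P_m$ of degree exactly $m$ with complex coefficients. The base case $m=0$ is $\int e^{\beta t}\,dt=\beta^{-1}e^{\beta t}+\text{const}$, which is where $\beta\ne 0$ is used. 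For the inductive step, integration by parts gives
\[
\int t^m e^{\beta t}\,dt=\frac{t^m}{\beta}e^{\beta t}-\frac{m}{\beta}\int t^{m-1}e^{\beta t}\,dt,
\]
and applying the induction hypothesis to the remaining integral yields $P_m(t)=\beta^{-1}t^m-m\beta^{-1}P_{m-1}(t)$, which has degree $m$.

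Finally I would separate real and imaginary parts. Writing $P_m(t)=p(t)+iq(t)$ with $p,q$ real polynomials of degree at most $m$, and expanding
\[
P_m(t)e^{\beta t}=\big(p(t)+iq(t)\big)e^{\alpha t}\big(\cos(\omega t)+i\sin(\omega t)\big),
\]
the real part is $e^{\alpha t}[p(t)\cos(\omega t)-q(t)\sin(\omega t)]$ and the imaginary part is $e^{\alpha t}[q(t)\cos(\omega t)+p(t)\sin(\omega t)]$. Since the real (resp.\ imaginary) part of a complex antiderivative of $t^m e^{\beta t}$ is a real antiderivative of $t^m e^{\alpha t}\cos(\omega t)$ (resp.\ $t^m e^{\alpha t}\sin(\omega t)$), both integrals in \eqref{tei} acquire the claimed form, with the coefficient polynomials merely interchanged and a sign adjusted between the two cases.

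I do not expect a serious obstacle here. The only point needing care is to record that $\alpha^2+\omega^2>0$ is invoked exactly once, in the base case of the induction to guarantee $\beta\ne 0$; without it the antiderivative would pick up an extra power of $t$. A purely real induction using integration by parts twice would also succeed, but it is less transparent because it couples the sine and cosine integrals, whereas complexification decouples the bookkeeping into a single scalar recursion.
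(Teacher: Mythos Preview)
Your proof is correct and is essentially the same as the paper's: the paper packages the two integrals into the vector $I(t)=(e^{\alpha t}\cos(\omega t),\,e^{\alpha t}\sin(\omega t))^{\rm T}$ and inducts via integration by parts using the matrix $D_{-1}=\frac{1}{\alpha^2+\omega^2}\begin{pmatrix}\alpha&\omega\\-\omega&\alpha\end{pmatrix}$, which is precisely the $\R^2$-representation of multiplication by your $\beta^{-1}$ on $\C$. The induction, the use of $\alpha^2+\omega^2>0$ for invertibility, and the resulting explicit formula all match.
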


Although this lemma is elementary, we give a proof in Appendix \ref{apd} that yields simple and explicit formulas for the integrals in \eqref{tei}, see \eqref{tmI} below.

The next lemma essentially originates from Foias-Saut \cite{FS87}, but is stated and proved in the same convenient form as \cite[Lemma 4.2]{HM2}.

\begin{lemma}\label{polylem}
Let $(X,\|\cdot\|_X)$ be a Banach space. Suppose $y$ is a function in $C([0,\infty),X)$, with distribution derivative $y'\in L^1_{\rm loc}([0,\infty),X)$, that solves the following ODE
 \beqs
 y'(t)+ \beta y(t) =p(t)+g(t)
 \eeqs
 in the $X$-valued distribution sense on $(0,\infty)$, where  $\beta\in \R$ is a constant, $p(t)$ is an $X$-valued S-polynomial, and $g\in L^1([0,\infty),X)$ satisfies
 \beq\label{gM}
 \|g(t)\|_X\le Me^{-\delta t}, \text{ for all } t\ge 0,  \text{ and some } M,\delta>0.
 \eeq

 Define $q(t)$, for $t\in  \R$, by
\beq\label{qdef}
 q(t)=
\begin{cases}
e^{-\beta t}\int_{-\infty}^t e^{\beta\tau }p(\tau) d\tau&\text{if }\beta >0,\\
y(0) +\int_0^\infty  g(\tau)d\tau + \int_0^t p(\tau)d\tau &\text{if }\beta =0,\\
-e^{-\beta t}\int_t^\infty e^{\beta\tau }p(\tau) d\tau&\text{if }\beta <0.
\end{cases}
\eeq

Then $q(t)$ is an $X$-valued S-polynomial  that  satisfies
\beq\label{pode2}
q'(t)+\beta q(t) = p(t),\text{ for all } t\in \R,
\eeq
and the following estimates hold:

\begin{enumerate}[label={\rm (\roman*)}]
  \item
  If $\beta>0$ then
  \beq\label{zsquare}
\|y(t)-q(t)\|_X^2 \le 2e^{-2\beta t}\|y(0)-q(0)\|_X^2 + 2t \int_0^t e^{-2\beta(t-\tau)} \|g(\tau)\|_X^2 d\tau,
\text{ for all } t\ge 0.
\eeq

  \item If either

  {\rm (a)} $\beta=0$, or

  {\rm (b)}  $\beta<0$ and
  \beq\label{limycond} \lim_{t\to\infty} (e^{\beta t}\|y(t)\|_X)=0,\eeq
then
  \beq\label{g1b3}
  \|y(t)-q(t)\|_X^2\le \Big(\frac{M}{\delta-\beta}\Big)^2e^{-2\delta t}, \quad \text{ for all } t\ge 0.
  \eeq
  \end{enumerate}
\end{lemma}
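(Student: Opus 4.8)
The plan is to first establish the two structural claims about $q(t)$, namely that it is an $X$-valued S-polynomial and that it solves the homogeneous-shifted ODE \eqref{pode2}, and only then to derive the decay estimates (i) and (ii) by comparing $y$ with $q$. For the structural part, I would treat the three cases $\beta>0$, $\beta=0$, $\beta<0$ separately, since $q$ is defined by different formulas in each. In the cases $\beta\neq 0$, the key computation is that $\int e^{\beta\tau}p(\tau)\,d\tau$, where $p$ is a finite sum of terms $\tau^m\cos(\omega\tau)Z$ and $\tau^m\sin(\omega\tau)Z$, can be evaluated termwise using Lemma \ref{intlem} (applied with $\alpha=\beta$), which guarantees each antiderivative has the shape $\tilde p(\tau)e^{\beta\tau}\cos(\omega\tau)+\tilde q(\tau)e^{\beta\tau}\sin(\omega\tau)$ with polynomial coefficients. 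When $\beta>0$ the improper integral $\int_{-\infty}^t$ converges because the $e^{\beta\tau}$ factor decays at $-\infty$, and multiplying by the prefactor $e^{-\beta t}$ exactly cancels the $e^{\beta\tau}$, leaving a finite sum of terms $p_j(t)\cos(\omega_j t)Z_j + q_j(t)\sin(\omega_j t)Z_j$; hence $q\in\SP_1(X)$. The case $\beta<0$ is symmetric, using $\int_t^\infty$ and the decay of $e^{\beta\tau}$ at $+\infty$. The case $\beta=0$ is the simplest: $q$ is $y(0)+\int_0^\infty g\,d\tau$ (a constant, finite because of \eqref{gM}) plus $\int_0^t p(\tau)\,d\tau$, and integrating an S-polynomial with no exponential factor produces another S-polynomial by direct termwise integration (here the integral of $\cos(\omega t)$ and $\sin(\omega t)$ stays within S-polynomials, and powers of $t$ integrate to higher powers). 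The identity \eqref{pode2} in all three cases follows by differentiating the defining formula \eqref{qdef}: for $\beta\neq 0$ the product rule applied to $e^{\mp\beta t}\int e^{\beta\tau}p\,d\tau$ yields $-\beta q + p$ from the boundary term plus $\beta q$ cancellation, giving $q'+\beta q = p$; for $\beta=0$ it is immediate that $q'=p$.

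For estimate (i), with $\beta>0$, I would set $z=y-q$. Subtracting \eqref{pode2} from the ODE for $y$ gives $z'+\beta z = g$ in the distribution sense, so by Duhamel (or by multiplying by the integrating factor $e^{\beta t}$ and integrating) one obtains $z(t)=e^{-\beta t}z(0)+\int_0^t e^{-\beta(t-\tau)}g(\tau)\,d\tau$. Taking the norm, applying the triangle inequality and then the Cauchy--Schwarz inequality to the integral term against the bound $\int_0^t e^{-2\beta(t-\tau)}\,d\tau \le t$ (or, more directly, estimating to produce the stated form with the factor $2t$), together with the elementary inequality $(a+b)^2\le 2a^2+2b^2$, yields precisely \eqref{zsquare}. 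The presence of the factor $2t$ rather than a constant reflects using $(\int)^2 \le t\int(\,\cdot\,)^2$ crudely; I would not optimize this.

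For estimate (ii), again $z=y-q$ solves $z'+\beta z=g$. In subcase (a), $\beta=0$, the definition of $q$ absorbs both $y(0)$ and the full tail $\int_0^\infty g\,d\tau$, so that $z(t)=-\int_t^\infty g(\tau)\,d\tau$; bounding this by $\int_t^\infty Me^{-\delta\tau}\,d\tau = (M/\delta)e^{-\delta t}$ and squaring gives \eqref{g1b3} (with $\beta=0$ the denominator $\delta-\beta$ is just $\delta$). In subcase (b), $\beta<0$, the same Duhamel representation together with the defining formula for $q$ and the hypothesis \eqref{limycond} forces $z(t)=-\int_t^\infty e^{-\beta(t-\tau)}g(\tau)\,d\tau$ — the growth-control condition \eqref{limycond} is exactly what kills the homogeneous solution $e^{-\beta t}z(0)$ that would otherwise blow up — and then $\|z(t)\|_X \le \int_t^\infty e^{-\beta(t-\tau)}Me^{-\delta\tau}\,d\tau = Me^{-\beta t}\int_t^\infty e^{(\beta-\delta)\tau}\,d\tau = (M/(\delta-\beta))e^{-\delta t}$, using $\delta-\beta>0$ since $\beta<0<\delta$. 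Squaring gives \eqref{g1b3}.

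The main obstacle, and the part requiring the most care, is the $\beta<0$ subcase of (ii): one must argue that the improper integral defining $q$ converges, that the hypothesis \eqref{limycond} pins down the correct decaying solution (rather than an arbitrary solution of $z'+\beta z=g$ which could contain a growing $e^{-\beta t}$ mode), and that the resulting $z(t)=-\int_t^\infty e^{-\beta(t-\tau)}g(\tau)\,d\tau$ is indeed the difference $y-q$. Concretely, from $z'+\beta z=g$ one has $z(t)=e^{-\beta t}z(0)+\int_0^t e^{-\beta(t-\tau)}g\,d\tau$; rewriting $\int_0^t = \int_0^\infty - \int_t^\infty$ and checking that $e^{-\beta t}z(0)+e^{-\beta t}\int_0^\infty e^{\beta\tau}g\,d\tau$ is precisely the homogeneous part whose coefficient must vanish under \eqref{limycond}, one isolates $z(t)=-\int_t^\infty e^{-\beta(t-\tau)}g\,d\tau$. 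Verifying this cancellation cleanly, and confirming it is consistent with the formula for $q$ in \eqref{qdef}, is the step I would write out most carefully; the rest is routine integration and elementary inequalities.
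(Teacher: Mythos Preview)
Your proposal is correct and follows essentially the same approach as the paper: use Lemma~\ref{intlem} to show $q$ is an S-polynomial, verify \eqref{pode2} by differentiating \eqref{qdef}, set $z=y-q$ so that $z'+\beta z=g$, and then apply Duhamel together with Cauchy--Schwarz/H\"older. The paper's proof is far terser because it delegates everything except the new inequality \eqref{zsquare} to \cite[Lemma~4.2]{HM2}; what you have written is precisely the self-contained version of that argument, including the $\beta<0$ step where \eqref{limycond} (together with the at-most-polynomial growth of $q$) forces the coefficient of the growing mode $e^{-\beta t}$ to vanish.
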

\begin{proof}
Thanks to Lemma \ref{intlem}, $q(t)$ is an $X$-valued S-polynomial. The rest of this lemma is the same as \cite[Lemma 4.2]{HM2}, except for the relaxed estimate \eqref{zsquare} which we verify now.
Consider $\beta>0$. Let $z(t)=y(t)-q(t)$. Recall the inequality after (4.13) in \cite[Lemma 4.2]{HM2}, for all $t\ge 0$,
\beqs
\|z(t)\|_X\le e^{-\beta t} \|z(0)\|+ \int_{t_0}^t e^{-\beta(t-\tau)}\|g(\tau)\|_X d\tau.
\eeqs

Using Cauchy-Schwarz's and H\"older's inequalities, we estimate
\begin{align*}
\|z(t)\|_X^2
&\le 2e^{-2\beta t}\|z(0)\|_X^2+ 2\Big( \int_0^t e^{-\beta (t-\tau)}\|g(\tau)\|_X d\tau\Big)^2\\
& \le 2e^{-2\beta t}\|z(0)\|_X^2 + 2t \int_0^t e^{-2\beta(t-\tau)} \|g(\tau)\|_X^2 d\tau.
\end{align*}
Therefore, we obtain \eqref{zsquare}.
\end{proof}

\section{The case of zero spatial average solutions}\label{zerosec}

We obtain two main asymptotic expansion results, one for equation \eqref{veq}, and the other for equation \eqref{NSO}.
We recall that the numbers $\mu_n$'s are defined in \eqref{mudef} with basic properties \eqref{musum} and \eqref{speclim}.

\begin{theorem}
\label{vthm}
For any Leray-Hopf weak solution $v(t)$ of \eqref{veq}, there exist unique $\mathcal V$-valued S-polynomials $q_n$'s, for all $n\in\N$, such that it holds, for any $\alpha,\sigma >0$ and $N\ge 1$, that
\beq\label{vremain}
\Big|v(t)-\sum_{n=1}^N q_n(t)e^{-\mu_nt}\Big|_{\alpha,\sigma}= \mathcal O
\big(e^{-\mu t}\big),\text{ as } t\to\infty,\text{ for all }\mu\in(\mu_N,\mu_{N+1}).
\eeq
That is,
\beqs
v(t)\sim \sum_{n=1}^\infty q_n(t)e^{-\mu_nt}\quad \text{ in } G_{\alpha,\sigma}, \text{ for all } \alpha,\sigma>0.
\eeqs
\end{theorem}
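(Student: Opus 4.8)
The plan is to imitate the Foias--Saut construction, exploiting the fact that \eqref{veq} differs from the standard \NSE{} only through the time-dependent bilinear form $B_\Omega$, whose algebraic compatibility with the S-polynomial class is exactly what Lemma \ref{SPVlem} supplies. First I would reduce to a favorable regime: by Theorem \ref{typicalthm}(iii) and the eventual regularity and decay (Proposition \ref{decay2}), after a harmless time translation (harmless since $\SP_1(\mathcal V)$ is translation invariant by Lemma \ref{Spolyinvar}) I may assume $v$ is a regular solution lying in $G_{\alpha,\sigma}$ for all $t\ge 0$ and obeying a crude exponential decay. A short preliminary argument then upgrades this to $|v(t)|_{\alpha,\sigma}=\mathcal O(e^{-\mu t})$ for every $\mu<\mu_1$, which seeds the induction.

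The main claim is proved by induction on $N$. I would show that there exist $\mathcal V$-valued S-polynomials $q_1,\dots,q_N$ such that the remainder
$$w_N(t)=v(t)-\sum_{n=1}^N q_n(t)e^{-\mu_n t}$$
satisfies $|w_N(t)|_{\alpha,\sigma}=\mathcal O(e^{-\mu t})$ for all $\mu\in(\mu_N,\mu_{N+1})$, carrying along the companion fact that the nonlinear forcing produced by $\sum_{n\le N}q_n e^{-\mu_n t}$ is, up to an exponentially smaller error, a sum of S-polynomials times $e^{-\mu_n t}$. For the inductive step I would insert $v=\sum_{n<N}q_n e^{-\mu_n t}+w_{N-1}$ into \eqref{veq}, so that $w_{N-1}'+Aw_{N-1}=F$, where $F$ collects the leftover nonlinear contributions and the defect of the partial sum. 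The crucial structural point is triangularity: since every $\mu_m\ge\mu_1>0$, a nonlinear interaction $B_\Omega(t,q_m,q_k)$ carries the factor $e^{-(\mu_m+\mu_k)t}$ with $\mu_m+\mu_k\in\langle\sigma(A)\rangle$ by \eqref{musum} and strictly larger than each of $\mu_m,\mu_k$; hence only indices $m,k<N$ feed the rate $\mu_N$. Collecting these resonant terms and invoking Lemma \ref{SPVlem} together with Lemma \ref{SPlem1} gives $F(t)=P_N(t)e^{-\mu_N t}+g(t)$, with $P_N\in\SP_1(P_{\Lambda^*}H)$ for some $\Lambda^*$ and $|g(t)|_{\alpha,\sigma}=\mathcal O(e^{-\mu t})$ for some $\mu>\mu_N$.

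Writing $w_{N-1}=q_N e^{-\mu_N t}+w_N$ and matching the $e^{-\mu_N t}$-order reduces the construction of $q_N$ to $q_N'+(A-\mu_N)q_N=P_N$. Projecting onto each Stokes eigenspace $R_\Lambda H$ turns this into the scalar-type ODE $\phi'+(\Lambda-\mu_N)\phi=R_\Lambda P_N$, which is precisely the setting of Lemma \ref{polylem} with $\beta=\Lambda-\mu_N$. Applying that lemma over the finitely many $\Lambda\le\Lambda^*$ produces the S-polynomial $q_N$: the resonant case $\Lambda=\mu_N$ yields the genuinely new ``free'' coefficient through the $\beta=0$ formula in \eqref{qdef}, while the case $\Lambda<\mu_N$ requires checking the hypothesis \eqref{limycond}, which follows directly from the inductive decay bound on $w_{N-1}$. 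The remainder estimates of Lemma \ref{polylem} then deliver the desired decay of $w_N$, advancing the induction.

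Two points need care. First, the remainder decay must be obtained in the strong Gevrey norm $|\cdot|_{\alpha,\sigma}$, not merely in $L^2$, and this is where I expect the main obstacle: it demands Gevrey-norm nonlinear estimates for the time-dependent form $B_\Omega$ (controlled using the isometry \eqref{ESiso} and the orthogonality \eqref{Sper}, in the spirit of Foias--Temam and Theorem \ref{typicalthm}(iii)) in order to bound both $g$ and $w_N$ in $G_{\alpha,\sigma}$, so that Lemma \ref{polylem} may be applied with $X=G_{\alpha,\sigma}$. Because each $q_N$ has finite Fourier support it automatically lies in every $G_{\alpha,\sigma}$, so all of the analytic content resides in the remainder. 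Finally, uniqueness of the $q_n$ follows from Lemma \ref{unilem}, and Remark \ref{uniG} reconciles the a priori different polynomials obtained for different pairs $(\alpha,\sigma)$, so that a single family $(q_n)_{n=1}^\infty$ serves simultaneously in all $G_{\alpha,\sigma}$, which is exactly the form of \eqref{vremain}.
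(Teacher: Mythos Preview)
Your proposal is correct and follows essentially the same route as the paper: reduce to the regular regime via Theorem~\ref{typicalthm}(iii) and Proposition~\ref{decay2}, prove the hierarchy \textbf{(H$_N$)} by induction, split the nonlinear forcing using the additive-semigroup structure \eqref{musum}, invoke Lemma~\ref{SPVlem} to keep the resonant part in $\SP_1(P_{\Lambda^*}H)$, project onto each $R_{\Lambda_k}H$, and feed each mode into Lemma~\ref{polylem} according to the sign of $\Lambda_k-\mu_{N+1}$.

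One point deserves sharpening. You say Lemma~\ref{polylem} ``may be applied with $X=G_{\alpha,\sigma}$,'' but that is not how the remainder bound is recovered: the lemma needs a \emph{scalar} $\beta$, so it is applied mode-by-mode with $X=R_{\Lambda_k}H$. For the finitely many $\Lambda_k\le\Lambda^*$ this already gives both $q_{N+1}$ and the low-mode remainder. The issue is the infinite spectral tail $\Lambda_k\ge\mu_{N+2}$, where $p\equiv0$ and one must \emph{sum} the mode-wise bounds to reassemble the Gevrey norm. This is precisely why the paper states Lemma~\ref{polylem}(i) in the squared form \eqref{zsquare}: it yields, for each high mode,
\[
|z_{N,k}(t)|_{\alpha,\sigma}^2 \le 2e^{-2(\Lambda_k-\mu_{N+1})t}|z_{N,k}(0)|_{\alpha,\sigma}^2 + 2t\int_0^t e^{-2(\Lambda_k-\mu_{N+1})(t-\tau)}|R_{\Lambda_k}H_N(\tau)|_{\alpha,\sigma}^2\,d\tau,
\]
which, after replacing $\Lambda_k-\mu_{N+1}$ by the uniform gap $\mu_{N+2}-\mu_{N+1}$ in the exponentials, can be summed in $k$ via Parseval to control $|({\rm Id}-P_{\mu_{N+1}})w_N|_{\alpha,\sigma}^2$. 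Once you make this summation explicit, your sketch matches the paper's proof.
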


Theorem \ref{vthm} is our key technical result. With this, we immediately obtain the asymptotic expansions for solutions of \eqref{NSO}.

\begin{theorem}
\label{uthm}
Let $u(t)$ be any Leray-Hopf weak solution of \eqref{NSO}. Then there exist unique $\mathcal V$-valued S-polynomials $Q_n$'s, for all $n\in\N$,  such that
it holds, for any $\alpha,\sigma >0$ and $N\ge 1$, that
\beq\label{uremain}
\Big|u(t)-\sum_{n=1}^N Q_n(t)e^{-\mu_n t} \Big|_{\alpha,\sigma}= \mathcal O
\big(e^{-\mu t}\big),\text{ as } t\to\infty,\text{ for all }\mu\in(\mu_N,\mu_{N+1}).
\eeq
That is,
\beqs
u(t)\sim \sum_{n=1}^\infty Q_n(t)e^{-\mu_nt}\quad \text{ in } G_{\alpha,\sigma}, \text{ for all } \alpha,\sigma>0.
\eeqs

Moreover, each $Q_n$ is related to $q_n$ in Theorem \ref{vthm} via relation \eqref{e1}, below.
\end{theorem}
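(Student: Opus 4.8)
The plan is to deduce Theorem \ref{uthm} from Theorem \ref{vthm} by undoing the Poincar\'e-wave change of variables. Recall that the solutions $u(t)$ of \eqref{NSO} and $v(t)$ of \eqref{veq} are related by $u(t)=e^{-\Omega t S}v(t)$. So, given the S-polynomials $q_n$ produced by Theorem \ref{vthm}, the natural candidates for the expansion of $u$ are
\beq\label{e1}
Q_n(t)=e^{-\Omega t S}q_n(t).
\eeq
First I would establish that each $Q_n$ so defined is again a $\mathcal V$-valued S-polynomial. This is exactly the content of \eqref{Sf1} in Lemma \ref{SPVlem}: since $q_n\in\SP_1(\mathcal V)$, it lies in $\SP_1(P_\Lambda H)$ for some $\Lambda\in\sigma(A)$ large enough to capture its finitely many Fourier modes, and then $e^{-\Omega t S}q_n(t)\in\SP_1(P_\Lambda H)\subset\SP_1(\mathcal V)$. (One applies \eqref{Sf1} with $\Omega$ replaced by $-\Omega$, which is harmless.)

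The core estimate is then immediate from the isometry property \eqref{ESiso}. Applying the operator $e^{-\Omega t S}$ to the partial-sum remainder and using that $e^{-\Omega t S}$ preserves every Gevrey norm $|\cdot|_{\alpha,\sigma}$, I would write
\beqs
\Big|u(t)-\sum_{n=1}^N Q_n(t)e^{-\mu_n t}\Big|_{\alpha,\sigma}
=\Big|e^{-\Omega t S}\Big(v(t)-\sum_{n=1}^N q_n(t)e^{-\mu_n t}\Big)\Big|_{\alpha,\sigma}
=\Big|v(t)-\sum_{n=1}^N q_n(t)e^{-\mu_n t}\Big|_{\alpha,\sigma}.
\eeqs
By Theorem \ref{vthm} the right-hand side is $\mathcal O(e^{-\mu t})$ for all $\mu\in(\mu_N,\mu_{N+1})$, which is precisely \eqref{uremain}. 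This handles existence of the expansion for $u$ together with the relation \eqref{e1} asserted in the statement; the computation is short precisely because $e^{-\Omega t S}$ intertwines so cleanly with the Gevrey norms.

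It remains to address uniqueness of the $Q_n$'s, and here I expect the only point needing care. The uniqueness Lemma \ref{unilem} applies to S-polynomial expansions in a fixed normed space $X$, and Remark \ref{uniG} extends it across the Gevrey scale; since \eqref{uremain} holds in $G_{\alpha,\sigma}$ for all $\alpha,\sigma>0$, any two families of $\mathcal V$-valued S-polynomials satisfying \eqref{ex2} with the exponents $\mu_n$ must coincide. The mild subtlety is that Lemma \ref{unilem} is phrased for scalar or $X$-valued S-polynomials and I am applying it with $X=G_{\alpha,\sigma}$ and the $\mathcal V$-valued S-polynomials $Q_n$; one must check that the $Q_n$, being finite sums of terms $t^m\cos(\omega t)Z$ and $t^m\sin(\omega t)Z$ with $Z\in\mathcal V\subset G_{\alpha,\sigma}$, genuinely lie in $\SP_1(G_{\alpha,\sigma})$, which is clear. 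I would conclude that the $Q_n$'s are uniquely determined, completing the proof. The main obstacle is not analytical but bookkeeping: ensuring the invariance \eqref{Sf1} and the norm identity \eqref{ESiso} are invoked with the correct sign of $\Omega$ and the correct $\Lambda$, after which the entire theorem follows by transport of structure from Theorem \ref{vthm}.
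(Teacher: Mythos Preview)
Your argument is correct and follows the same route as the paper: transport the expansion of $v$ through $e^{-\Omega t S}$, use the isometry \eqref{ESiso} to carry over the remainder estimate, invoke \eqref{Sf1} to see that each $Q_n$ is again a $\mathcal V$-valued S-polynomial, and appeal to Lemma~\ref{unilem} for uniqueness.

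There is one difference worth flagging. The paper does not apply Theorem~\ref{vthm} to $v(t)=e^{\Omega t S}u(t)$ directly; instead it first invokes Theorem~\ref{typicalthm}(iii) to obtain a time $T_*>0$ after which $u$ (hence $v$) is regular, and then applies Theorem~\ref{vthm} to the shifted solution $t\mapsto v(T_*+t)$. This is why the paper's relation \eqref{e1} reads
\[
Q_n(t)=e^{\mu_n T_*}\,e^{-\Omega t S}q_n(t-T_*),
\]
with the extra factor $e^{\mu_n T_*}$ and the argument shift, rather than your simpler $Q_n(t)=e^{-\Omega t S}q_n(t)$. Your version is cleaner, but it tacitly uses that $v=e^{\Omega t S}u$ is itself a Leray--Hopf weak solution of \eqref{veq} on $[0,\infty)$ in the sense of Definition~\ref{lhv}, so that Theorem~\ref{vthm} applies without any shift. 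That fact is true and easy (the isometry \eqref{ESiso} transfers the regularity classes and the energy inequality, and the weak formulation transforms via \eqref{eaj}), but the paper never states it as a lemma; it sidesteps the check by passing to the eventually regular regime. If you keep your direct approach, you should add one sentence justifying this point.
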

\begin{proof}
Let $T_*>0$ be as in Theorem \ref{typicalthm}(iii).
Set $v(t)=e^{\Omega t S} u(t)$. Then $v(t)$ is a regular solution of \eqref{veq} on $[T_*,\infty)$.
Applying Theorem \ref{vthm} to solution $v(T_*+t)$, we have, for all $\alpha,\sigma>0$ and $N\ge 1$, that
\beq\label{vremT}
\Big|v(T_*+t)-\sum_{n=1}^N q_n(t)e^{-\mu_nt}\Big|_{\alpha,\sigma}= \mathcal O
\big(e^{-\mu t}\big),\text{ as }  t\to\infty,\text{ for all }\mu\in(\mu_N,\mu_{N+1}),
\eeq
where all $q_n$'s belong to $\SP_1(\mathcal V)$.
By shifting the time variable, we obtain from \eqref{vremT} that
\beq\label{vrem2}
\Big|v(t)-\sum_{n=1}^N q_n(t-T_*)e^{-\mu_n (t-T_*)}\Big|_{\alpha,\sigma}= \mathcal O
\big(e^{-\mu t}\big), \text{ as } t\to\infty.
\eeq

Let
\beq\label{e1} Q_n(t)=e^{\mu_n T_*} e^{-\Omega t S} q_n(t-T_*).
\eeq
Rewrite the left-hand side of \eqref{vrem2} as
\beqs
\Big|e^{\Omega t S} \Big(u(t)-\sum_{n=1}^N Q_n(t) e^{-\mu_nt}\Big)\Big|_{\alpha,\sigma},
\text{ which equals }\Big|u(t)-\sum_{n=1}^N Q_n(t)e^{-\mu_n t}\Big|_{\alpha,\sigma}
\eeqs
thanks to the isometry \eqref{ESiso}. Thus, we obtain \eqref{uremain}.
Thanks to Lemma \ref{Spolyinvar}(i), each $q_n(t-T_*)$ is a  $\mathcal V$-valued S-polynomial, and hence, by \eqref{Sf1} of Lemma \ref{SPVlem}, so is each $Q_n(t)$.
The uniqueness of the S-polynomials $Q_n$'s follows from Lemma \ref{unilem}.
\end{proof}

Our proof of Theorem \ref{vthm} uses the Gevrey norm technique.
We recall a convenient estimate in \cite[Lemma 2.1]{HM1} for the Gevrey norms of the the bi-linear form $B(\cdot,\cdot)$, which is a generalization of the original inequality in \cite[Lemma 2.1]{FT-Gevrey}, and also the Sobolev estimates in \cite{FHOZ1}.

\textit{
There exists a constant $K\ge 1$ such that for any numbers $\alpha\ge 1/2$, $\sigma\ge 0$, and any functions $v,w\in G_{\alpha+1/2,\sigma}$,  one has
\beq\label{Buv}
|B(v,w)|_{\alpha,\sigma}
\le K^\alpha |v|_{\alpha+1/2,\sigma}  |w|_{\alpha+1/2,\sigma}.
\eeq
}

The same estimate as \eqref{Buv} can be obtained for $B(t,v,w)$.

\begin{lemma}\label{Btuv}
For any numbers $\alpha\ge 1/2$, $\sigma\ge 0$, any functions $v,w\in G_{\alpha+1/2,\sigma}$  and any $t\in \R$, one has
\beq\label{Bas}
|B(t,v,w)|_{\alpha,\sigma}
\le K^\alpha |v|_{\alpha+1/2,\sigma}  |w|_{\alpha+1/2,\sigma},
\eeq
where $K$ is the constant in \eqref{Buv}.
\end{lemma}
\begin{proof}
 By the isometry \eqref{ESiso} and inequality \eqref{Buv}, we have
 \begin{align*}
  |B(t,v,w)|_{\alpha,\sigma}&=|B(e^{-tS}v,e^{-tS}w)|_{\alpha,\sigma}\\
&  \le K^\alpha |e^{-tS} v|_{\alpha+1/2,\sigma}  |e^{-tS} w|_{\alpha+1/2,\sigma}
  =  K^\alpha |v|_{\alpha+1/2,\sigma}  |w|_{\alpha+1/2,\sigma},
\end{align*}
which proves \eqref{Bas}.
\end{proof}

As another preparation for the proof of Theorem \ref{vthm}, we establish the relevant estimates for the Gevrey norms of $v(t)$, when $t$ is large.

\begin{proposition}
\label{decay2}
Let $v^0\in H$ and $v(t)$ be a Leray-Hopf weak solution of \eqref{veq}.
For any $\s>0$,   there exist $T,D_\s>0$  such that
	\beqs
|v(t)|_{1/2,\s+1}\le D_\s e^{-t},\quad \text{ for all } t\ge T.
	\eeqs
Moreover, for any $\alpha\ge 0$ there exists $D_{\al,\s}>0$ such that
\beq\label{cl10}
|v(t)|_{\al+1/2,\s}\le D_{\al,\s} e^{-t},\quad \text{ for all }\ t\ge T.
\eeq
\end{proposition}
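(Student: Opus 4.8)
The plan is to first reduce to the regular regime and establish the sharp $H$-decay, then transport that rate up the Gevrey scale. By Theorem \ref{typicalthm}(iii) (applied with Gevrey index $\sigma+1$), there is a time after which $v$ is a regular solution lying in $G_{1/2,\sigma+1}$ and \eqref{veq} holds in $\mathcal{D}(A)$ with classical time derivative; hence all the differential identities below are rigorous and it suffices to prove the bounds for large $t$. Pairing \eqref{veq} with $v$ and invoking the orthogonality \eqref{BS2}, namely $\langle B_\Omega(t,v,v),v\rangle=0$, yields $\tfrac12\frac{d}{dt}|v|^2+\|v\|^2\le 0$; since $\lambda_1=\mu_1=1$ we have $\|v\|^2=|v|_{1/2,0}^2\ge |v|^2$, so Gronwall gives $|v(t)|\le C_0 e^{-t}$. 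The entire difficulty is to carry this rate, rather than the weaker $e^{-t/2}$, through to the strong Gevrey norms.

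The engine is a weighted energy scheme with a slowly growing exponent. Fix a nondecreasing weight $\varphi(t)$ with $0\le \varphi'\le \tfrac12$ and set $w=e^{\varphi(t)A^{1/2}}v$. Differentiating $|v|_{1/2,\varphi}^2=\|w\|^2$ along \eqref{veq}, using that $A^{1/2}$, $A$ and $e^{\varphi A^{1/2}}$ commute, gives
\[
\tfrac12\frac{d}{dt}\|w\|^2=-(1-\varphi')\,|v|_{1,\varphi}^2-\langle A^{1/2}w,\,A^{1/2}e^{\varphi A^{1/2}}B_\Omega(t,v,v)\rangle.
\]
By Cauchy--Schwarz and the time-uniform Gevrey estimate \eqref{Bas} of Lemma \ref{Btuv} with $\alpha=1/2$, the last term is at most $K^{1/2}\,|v|_{1/2,\varphi}\,|v|_{1,\varphi}^2$. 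For $t$ large the factor $|v|_{1/2,\varphi}$ is small---this is ensured inductively by the decay already proved at the previous exponent---so the nonlinear term is absorbed and one obtains, for some $c>0$,
\[
\frac{d}{dt}|v|_{1/2,\varphi(t)}^2+c\,|v|_{1,\varphi}^2\le 0.
\]
In particular $t\mapsto |v(t)|_{1/2,\varphi(t)}^2$ is non-increasing and $\int_t^{t+1}|v|_{1,\varphi}^2\,d\tau\le c^{-1}|v(t)|_{1/2,\varphi(t)}^2$.

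The decisive and, I expect, hardest point is extracting the sharp rate without losing a factor of two. Suppose inductively that $|v(\tau)|_{1/2,s}\le C e^{-\tau}$, with $\varphi(t)=s$; then the previous bound controls $\int_t^{t+1}|v|_{1,\varphi}^2$ by $c^{-1}C^2 e^{-2t}$, so some $s^*\in[t,t+1]$ satisfies $|v(s^*)|_{1,\varphi(s^*)}^2\le c^{-1}C^2 e^{-2t}$. Because $|\widecheck\veck|\ge 1$ gives $|v|_{1/2,\varphi}\le |v|_{1,\varphi}$, and because $|v|_{1/2,\varphi(\cdot)}^2$ is non-increasing, this propagates: $|v(t+1)|_{1/2,\varphi(t+1)}^2\le |v(s^*)|_{1/2,\varphi(s^*)}^2\le c^{-1}C^2 e^{-2t}$, i.e. $|v(t+1)|_{1/2,\varphi(t+1)}\le C' e^{-(t+1)}$. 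Since $\varphi$ may rise by up to $\tfrac12$ across the unit interval, this simultaneously preserves the $e^{-t}$ rate and raises the Gevrey exponent from $s$ to $s+\tfrac12$; the crucial point is that the rate is inherited from the \emph{square} of the weaker norm (ultimately from the $H$-datum, whose square decays like $e^{-2t}$), which is exactly what blocks the degradation to $e^{-t/2}$. The base step $s=0$ is identical, using the energy inequality $\int_t^{t+1}\|v\|^2\le \tfrac12|v(t)|^2$ in place of the weighted one. Iterating finitely many unit steps, starting from $|v(t)|\le C_0 e^{-t}$, reaches $|v(t)|_{1/2,\sigma+1}\le D_\sigma e^{-t}$.

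Finally, \eqref{cl10} follows from a purely spectral embedding. Comparing Fourier coefficients,
\[
|v|_{\alpha+1/2,\sigma}^2=\sump |\widecheck\veck|^{4\alpha+2}e^{2\sigma|\widecheck\veck|}|\widehat\vecv_\veck|^2,\qquad |v|_{1/2,\sigma+1}^2=\sump |\widecheck\veck|^{2}e^{2(\sigma+1)|\widecheck\veck|}|\widehat\vecv_\veck|^2,
\]
and since $|\widecheck\veck|\ge 1$ the ratio $|\widecheck\veck|^{4\alpha}e^{-2|\widecheck\veck|}$ is bounded by a constant $C(\alpha)$. Hence $|v|_{\alpha+1/2,\sigma}\le C(\alpha)^{1/2}|v|_{1/2,\sigma+1}\le D_{\alpha,\sigma}e^{-t}$ for $t\ge T$, which is \eqref{cl10}. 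This is also why the first estimate is stated with the buffer exponent $\sigma+1$: the extra unit of analyticity furnishes the arbitrary polynomial weight $|\widecheck\veck|^{4\alpha}$ needed for every $\alpha\ge 0$.
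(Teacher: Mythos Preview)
Your argument is correct and is precisely the Foias--Temam Gevrey scheme with the sharp-rate extraction that the paper invokes: the paper omits its own proof and defers to \cite[Theorem 2.4]{HM1}, and you have reconstructed that proof (the key ingredients \eqref{BS2}, \eqref{Bas}, and the pigeonhole/monotonicity step to avoid losing a factor of two are exactly the ones used there). One small slip: your displayed differential identity should be an inequality, since the $\varphi'$ term actually carries $|v|_{3/4,\varphi}^2$ rather than $|v|_{1,\varphi}^2$; as $|\widecheck\veck|\ge 1$ this only helps, and you use only the inequality direction anyway.
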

\begin{proof}
Thanks to the isometry \eqref{ESiso}, properties \eqref{BS1}, \eqref{BS2}, and inequality \eqref{Bas}, the proof, with $\mu_1=1$ under the current setting, is exactly the same as in \cite[Theorem 2.4]{HM1} and is omitted.
\end{proof}

We now are ready to prove Theorem \ref{vthm}.

\begin{proof}[Proof of Theorem \ref{vthm}]
This proof follows \cite{FS87} and \cite{HM1,HM2} with necessary modifications.

Firstly, we note, by part (iii) of Theorem \ref{typicalthm},  that there exists $T_*>0$ such that
equation \eqref{veq} holds in the classical sense in $\mathcal D(A)$ on $(T_*,\infty)$.

Let $\sigma>0$ be fixed. For each $N\in\N$, our main statement is

\textbf{(H$_N$)}
\textit{There exist $\mathcal V$-valued S-polynomials $q_n$'s for $n=1,2,\ldots,N$,
such that
\beq\label{vnrem}
\Big|v(t)-\sum_{n=1}^N q_n(t)e^{-\mu_n t}\Big|_{\alpha,\sigma}=\mathcal O(e^{-(\mu_N+\varep)t})\text{ as }t\to\infty,
\eeq
for all $\alpha>0$, and some $\varep=\varep_{N,\alpha}>0$. Moreover, each $v_n(t)\eqdef q_n(t)e^{-\mu_n t}$, for $n=1,2,\ldots,N$, solves the equation
\beq\label{vneq}
v_n'(t)+Av_n(t) +\sum_{\stackrel{1\le m,j \le n-1}{\mu_m+\mu_j=\mu_n}} B_{\Omega}(t,v_m(t),v_j(t))=0, \text{ for all } t\in\R.
\eeq
}

\noindent\textbf{Claim:} \textbf{(H$_N$)} holds true for any $N\in\N$.

We prove this Claim by induction in $N$.

\medskip
\noindent\textbf{First step $N=1$.}
Given $\alpha\ge 1/2$.
By estimate \eqref{cl10}, there exist $T_0>T_*$ and $d_0>0$ such that
\beq\label{vdec}
|v(t)|_{\alpha+1/2,\sigma}\le d_0 e^{-\mu_1 t},\quad\text{for all }t\ge T_0.
\eeq

Set $w_0=e^{\mu_1 t}v(t)$. We have, for $t\in(T_0,\infty)$, that
\beqs
w_0'=e^{\mu_1 t}( v' +\mu_1 v)
=e^{\mu_1 t} (-Av -B_{\Omega}(t,v,v)) +\mu_1 v),
\eeqs
hence
\beq \label{w0eq}
w_0' +(A-\mu_1)w_0 =H_0(t)\eqdef - e^{\mu_1 t}B_{\Omega}(t,v(t),v(t)).
\eeq

Estimate \eqref{vdec} and inequality \eqref{Bas} imply
\beq\label{HOT}
|H_0(T_0+t)|_{\alpha,\sigma}
\le e^{\mu_1 (T_0+t)} K^\alpha |v(T_0+t)|_{\alpha+1/2,\sigma}^2
\le M_0 e^{-\mu_1 t},\quad\text{for all } t\ge 0,
\eeq
where $M_0=K^\alpha d_0^2 e^{-\mu_1 T_0}$.

For $k\in\N$, applying the projection $R_{\Lambda_k}$ to equation \eqref{w0eq} gives
\beq \label{Rkw0}
(R_{\Lambda_k} w_0)' +(\Lambda_k-\mu_1)R_{\Lambda_k} w_0 =R_{\Lambda_k} H_0(t).
\eeq

We apply Lemma \ref{polylem} to equation \eqref{Rkw0} in the space $X=R_{\Lambda_k}H$ with norm $\|\cdot\|_X=|\cdot|_{\alpha,\sigma}$,  solution $y(t)=R_{\Lambda_k} w_0(T_0+t)$, S-polynomial  $p(t)\equiv 0$, constant  $\beta=\Lambda_k-\mu_1\ge 0$, function $g(t)=R_{\Lambda_k}H_0(T_0+t)$ and numbers $M=M_0$, $\delta=\mu_1$ in \eqref{gM}.

When $k=1$, we have $\beta=0$, then by Lemma \ref{polylem}(ii), it follows that
\beq\label{R1w0}
|R_{\Lambda_1} w_0(T_0+t)-\xi_1|_{\alpha,\sigma}=\mathcal O(e^{-\mu_1 t}),
\eeq
where
\beqs
\xi_1=R_{\Lambda_1}w_0(T_0)+\int_0^\infty e^{\mu_1 \tau} R_{\Lambda_1} H_0(T_0+\tau)d\tau,
\eeqs
which exists and belongs to $R_{\Lambda_1}H$.

When $k\ge 2$, we have $\beta\ge \mu_2-\mu_1>0$, and it follows Lemma \ref{polylem}(i) that $q(t)$ defined by \eqref{qdef} is $0$, and, by \eqref{zsquare}, one has
\begin{align*}
&|R_{\Lambda_k} w_0(T_0+t)|_{\alpha,\sigma}^2
\le 2e^{-2(\Lambda_k-\mu_1) t}|R_{\Lambda_k} w_0(T_0)|_{\alpha,\sigma}^2 + 2t \int_0^t e^{-2(\Lambda_k-\mu_1)(t-\tau)} |R_{\Lambda_k} H_0(T_0+\tau)|_{\alpha,\sigma}^2 d\tau  \\
&\le 2e^{-2(\mu_2-\mu_1) t}|R_{\Lambda_k} w_0(T_0)|_{\alpha,\sigma}^2 + 2t \int_0^t e^{-2(\mu_2-\mu_1)(t-\tau)} |R_{\Lambda_k} H_0(T_0+\tau)|_{\alpha,\sigma}^2 d\tau.
\end{align*}

Summing up this inequality in $k$ gives
\begin{align*}
&|({\rm Id}-R_{\Lambda_1})w_0(T_0+t)|_{\alpha,\sigma}^2
=\sum_{k=2}^\infty |R_{\Lambda_k} w_0(T_0+t)|_{\alpha,\sigma}^2\\
&\le 2e^{-2(\mu_2-\mu_1) t}\sum_{k=2}^\infty |R_{\Lambda_k} w_0(T_0)|_{\alpha,\sigma}^2 + 2t \int_0^t e^{-2(\mu_2-\mu_1)(t-\tau)}\sum_{k=2}^\infty |R_{\Lambda_k} H_0(T_0+\tau)|_{\alpha,\sigma}^2 d\tau\\
&\le 2e^{-2(\mu_2-\mu_1) t}|w_0(T_0)|_{\alpha,\sigma}^2 + 2te^{-2(\mu_2-\mu_1)t} \int_0^t e^{2(\mu_2-\mu_1)\tau} | H_0(T_0+\tau)|_{\alpha,\sigma}^2 d\tau.
\end{align*}

Using \eqref{HOT}, we obtain
\begin{align*}
|({\rm Id}-R_{\Lambda_1})w_0(T_0+t)|_{\alpha,\sigma}^2
\le 2e^{-2(\mu_2-\mu_1) t}\Big(|w_0(T_0)|_{\alpha,\sigma}^2 +  2 M_0 t \int_0^t e^{2(\mu_2-2\mu_1)\tau} d\tau\Big).
\end{align*}

Since $2\mu_1>\mu_1$ and, by  \eqref{musum}, $2\mu_1\in \langle \sigma(A)\rangle$, then, thanks to \eqref{mudef}, $2\mu_1\ge \mu_2$.
Using this simple fact to estimate the last integral yields
\begin{align}\label{IR1w0}
|({\rm Id}-R_{\Lambda_1})w_0(T_0+t)|_{\alpha,\sigma}^2
&\le 2e^{-2(\mu_2-\mu_1) t}\big(|w_0(T_0)|_{\alpha,\sigma}^2 +  2 M_0 t^2\big).
\end{align}

Combining \eqref{R1w0}, \eqref{IR1w0} and the fact $\mu_2-\mu_1\le \mu_1$, gives
\beq\label{w0x1}
|w_0(t)-\xi_1|_{\alpha,\sigma}
\le |R_{\Lambda_1}w_0(t)-\xi_1|_{\alpha,\sigma}  + |({\rm Id}-R_{\Lambda_1})w_0(t)|_{\alpha,\sigma}=\mathcal O(e^{-\varep t}),
\eeq
for any number $\varep$ such that $0<\varep<\mu_2-\mu_1$.

Define
\beq\label{q1} q_1(t)\equiv \xi_1.\eeq

Multiplying \eqref{w0x1} by $e^{-\mu_1 t}$ yields \eqref{vnrem} for $N=1$.
Also, since $\xi_1\in R_{\Lambda_1}H$, it is clear that $v_1(t)=\xi_1 e^{-\Lambda_1 t}$ satisfies  $v_1'(t)+Av_1(t)=0$ on $\R$.
Hence, $v_1$ satisfies \eqref{vneq} with $n=1$, because the sum of the bi-linear terms in \eqref{vneq} is void.
Note that $q_1$ does not depend on $\alpha$. Therefore, the statement \textbf{(H$_N$)} holds true for $N=1$.

\medskip
\noindent\textbf{Induction step.} Let $N\ge 1$ and assume the statement \textbf{(H$_N$)} holds true.
Let $q_n$, for $n=1,2,\ldots, N,$ be the $\mathcal V$-valued  S-polynomials in \textbf{(H$_N$)}.
There exists $\Lambda\in\sigma(A)$ such that
\beq\label{qnL}
q_n\in\SP_1(P_\Lambda H), \text{ for all } 1\le n\le N.
\eeq

Let $v_n(t)=q_n(t)e^{-\mu_n t}$, denote $s_N(t)=\sum_{n=1}^N v_n(t)$
and $r_N(t)=v(t)-s_N(t)$. Let $\alpha\ge 1/2$.

(a) By the definition of $v_n$, we have, for $n\ge 2$,
\beq\label{h3}
|v_n(t)|_{\alpha+1/2,\sigma}= \mathcal O(e^{-(\mu_n-\delta) t})\quad\forall \delta>0,
\eeq
and, thanks to \eqref{q1},
\beqs
|v_1(t)|_{\alpha+1/2,\sigma}= \mathcal O(e^{-\mu_1 t}).
\eeqs
The last two properties imply
\beq\label{h2}
|s_N(T+t)|_{\alpha+1/2,\sigma} =\mathcal O(e^{-\mu_1 t}).
\eeq

By the induction hypothesis \textbf{(H$_N$)} applied to $\alpha+1/2$, there exists $\varep>0$ such that
\beq\label{h1}
|r_N(t)|_{\alpha+1/2,\sigma}=\mathcal O(e^{-(\mu_N+\varep)t}).
\eeq

We derive a differential equation for $r_N(t)$, for $t>T_*$. We calculate from \eqref{veq} and \eqref{vneq} for $n=1,2,\ldots,N$ that
\begin{align*}
 r_N'
 &=v'-\sum_{n=1}^N v_n' =-Av-B_\Omega(t,v,v)-\sum_{n=1}^N \Big\{ -Av_n - \sum_{\stackrel{1\le m,j \le n-1}{\mu_m+\mu_j=\mu_n}} B_{\Omega}(t,v_m,v_j)\Big\}\\
 &=-Ar_N -B_\Omega(t,r_N(t),v(t))-B_\Omega(t,s_N(t),r_N(t))-B_\Omega(t,s_N(t),s_N(t))\\
 &\quad + \sum_{\mu_m+\mu_j\le \mu_N} B_{\Omega}(t,v_m,v_j).
\end{align*}

We manipulate the last two terms as
\begin{align*}
&-B_\Omega(t,s_N(t),s_N(t))+ \sum_{\mu_m+\mu_j\le \mu_N} B_{\Omega}(t,v_m,v_j)
=-\sum_{\stackrel{1\le m,j\le N}{\mu_m+\mu_j\ge\mu_{N+1}}} B_\Omega(t,v_m(t),v_j(t))\\
&  =-\sum_{\stackrel{1\le m,j\le N}{\mu_m+\mu_j=\mu_{N+1}}} B_\Omega(t,v_m,v_j)-\sum_{\stackrel{1\le m,j\le N}{\mu_m+\mu_j\ge\mu_{N+2}}} B_\Omega(t,v_m(t),v_j(t)).
\end{align*}

 Thus, we obtain
\beq\label{vNeq}
r_N'(t)+Ar_N(t) +\sum_{\stackrel{1\le m,j\le N}{\mu_m+\mu_j=\mu_{N+1}}} B_\Omega(t,v_m(t),v_j(t))=h_N,
\text{ for } t>T_*,
\eeq
where
\beq\label{hdef}
h_N(t)=-B_\Omega(t,r_N(t),v(t))-B_\Omega(t,s_N(t),r_N(t))-\sum_{\stackrel{1\le m,j\le N}{\mu_m+\mu_j\ge  \mu_{N+2}}} B_\Omega(t,v_m(t),v_j(t)).
\eeq

(b) We estimate each term on the right-hand side of \eqref{hdef}. Note from \eqref{vdec}, \eqref{h1}, \eqref{h3}, \eqref{h2} and \eqref{Bas} that
\begin{align}
\label{BO1}
  |B_\Omega(t,r_N(t),v(t))|_{\alpha,\sigma} &=\mathcal O(e^{-(\mu_N+\mu_1+\varep)t}),  \\
\label{BO2}
  |B_\Omega(t,s_N(t),r_N(t))|_{\alpha,\sigma} &= \mathcal O(e^{-(\mu_N+\mu_1+\varep)t}),
\end{align}
and for $1\le m,j\le N$ with $\mu_m+\mu_j\ge  \mu_{N+2}$,
\beq
|B_\Omega(t,v_m(t),v_j(t))|_{\alpha,\sigma}=\mathcal O(e^{-(\mu_m+\mu_j-2\delta)t})=\mathcal O(e^{-(\mu_{N+2}-2\delta)t}),\forall \delta>0.\label{Bsum}
\eeq

Observe that $\mu_N+\mu_1>\mu_N$, and, by \eqref{musum}, $\mu_N+\mu_1\in\langle \sigma(A)\rangle$. This and \eqref{mudef} imply $\mu_N+\mu_1\ge \mu_{N+1}$. Obviously, $\mu_{N+2}>\mu_{N+1}$.
Then, by taking $\delta$ sufficiently small in \eqref{Bsum}, we have from \eqref{hdef}, \eqref{BO1}, \eqref{BO2} and \eqref{Bsum} that
\beq\label{hNO}
|h_N(t)|_{\alpha,\sigma}=\mathcal O(e^{-(\mu_{N+1}+\delta_N)t}),\quad  \text{for some }\delta_N\in(0,\mu_{N+2}-\mu_{N+1}).
\eeq

(c) Define $w_{N}(t)=e^{\mu_{N+1}t}r_N(t)$, and  $w_{N,k}(t)=R_{\Lambda_k}w_{N}(t)$, for $k\in\N$.
We have from \eqref{vNeq} that
\beq\label{wNk}
\ddt w_{N,k} +(\Lambda_k-\mu_{N+1})w_{N,k}=-\sum_{\stackrel{1\le m,j\le N}{\mu_m+\mu_j=\mu_{N+1}}} R_{\Lambda_k}B_\Omega(t,q_m,q_j) +R_{\Lambda_k}H_N(t),
\eeq
where $H_N(t)=e^{\mu_{N+1}t}h_N(t)$.
By \eqref{qnL} and \eqref{Sf3},
\beq \label{BLam}
B_\Omega(t,q_m(t),q_j(t))\in \SP_1(P_{4\Lambda}H),
\eeq
which implies that the finite sum
$$\sum_{\stackrel{1\le m,j\le N}{\mu_m+\mu_j=\mu_{N+1}}} B_\Omega(t,q_m(t),q_j(t))\in \SP_1(P_{4\Lambda}H).$$
Consequently,
\beq\label{B4L}
\sum_{\stackrel{1\le m,j\le N}{\mu_m+\mu_j=\mu_{N+1}}} R_{\Lambda_k}B_\Omega(t,q_m(t),q_j(t))
\in \SP_1(R_{\Lambda_k}H).
\eeq

By the first property in Lemma \ref{Spolyinvar}(i),
\beqs
\sum_{\stackrel{1\le m,j\le N}{\mu_m+\mu_j=\mu_{N+1}}} R_{\Lambda_k}B_\Omega(T+t,q_m(T+t),q_j(T+t))\in \SP_1(R_{\Lambda_k}H), \quad \text{for all } T\in \R.
\eeqs

By \eqref{hNO},
$|H_N(t)|_{\alpha,\sigma}=\mathcal O(e^{-\delta_N t})$.
Then there exist $T_N>T_*$ and $M_N>0$ such that
\beq\label{HNrate}
|H_N(T_N+t)|_{\alpha,\sigma}\le M_N e^{-\delta_N t}, \quad\text{for all }t\ge 0.
\eeq

We will apply Lemma \ref{polylem} again to equation \eqref{wNk} in the space $X=R_{\Lambda_k}H$ with norm $\|\cdot\|_X=|\cdot|_{\alpha,\sigma}$, solution $y(t)=w_{N,k}(T_N+t)$, constant $\beta=\Lambda_k-\mu_{N+1}$, S-polynomial
$$p(t)=-\sum_{\stackrel{1\le m,j\le N}{\mu_m+\mu_j=\mu_{N+1}}} R_{\Lambda_k}B_\Omega(T_N+t,q_m(T_N+t),q_j(T_N+t)),$$
function $g(t)=R_{\Lambda_k}H_N(T_N+t)$, numbers $M=M_N$ and $\delta=\delta_N$ in \eqref{HNrate}.

We consider three cases.

\medskip
\textbf{Case $\Lambda_k= \mu_{N+1}$.}  Then $\beta=0$ in \eqref{wNk}.
Let
\beqs
\xi_{N+1}\eqdef R_{\mu_{N+1}}r_N(T_N)+\int_0^\infty e^{\mu_{N+1}\tau} R_{\mu_{N+1}}H_N(T_N+\tau)\ d\tau,
\eeqs
which exists and belongs to $R_{\mu_{N+1}}H$.
Define
\beq \label{p1}
p_{N+1,k}(t)=\xi_{N+1}-\int_0^t \sum_{\stackrel{1\le m,j\le N}{\mu_m+\mu_j=\mu_{N+1}}} R_{\mu_{N+1}} B_\Omega(T_N+ \tau,q_m(T_N+\tau),q_j(T_N+\tau))d\tau.
\eeq

\medskip
\textbf{Case $\Lambda_k\le \mu_N$.} Then $\beta<0$ in \eqref{wNk}.
Note, by \eqref{h1}, that
\beqs
e^{\beta t}|y(t)|_{\alpha,\sigma}=e^{\Lambda_k t}|R_{\Lambda_k}r_N(T_N+t)|_{\alpha,\sigma}
\le e^{\mu_N t}|R_{\Lambda_k}r_N(T_N+t)|_{\alpha,\sigma}=\mathcal O(e^{-\varep t}).
\eeqs
Hence condition \eqref{limycond} is met. Define
\beq\label{i1}
\begin{aligned}
p_{N+1,k}(t)&=e^{-(\Lambda_k-\mu_{N+1})t}\int_t^\infty e^{(\Lambda_k-\mu_{N+1})\tau}\\
&\quad \cdot \Big(\sum_{\stackrel{1\le m,j\le N}{\mu_m+\mu_j=\mu_{N+1}}}R_{\Lambda_k}B_\Omega(T_N+ \tau,q_m(T_N+\tau),q_j(T_N+\tau)) \Big) d\tau.
\end{aligned}
\eeq

In the above two cases of $\Lambda_k$,  by applying Lemma \ref{polylem}(ii), we obtain $p_{N+1,k}\in \SP_1(R_{\Lambda_k}H)$
that, by \eqref{g1b3}, satisfies
\beq\label{smallk}
|w_{N,k}(T_N+t)-p_{N+1,k}(t)|_{\alpha,\sigma}^2=\mathcal O(e^{-2\delta_N t}).
\eeq

\medskip
\textbf{Case $\Lambda_k\ge \mu_{N+2}$.} Then $\beta>0$ in \eqref{wNk}. Define
\beq\label{i2}
\begin{aligned}
p_{N+1,k}(t)
&=-e^{-(\Lambda_k-\mu_{N+1})t}\int_{-\infty}^t e^{(\Lambda_k-\mu_{N+1})\tau} \\
&\quad \cdot \Big(\sum_{\stackrel{1\le m,j\le N}{\mu_m+\mu_j=\mu_{N+1}}}R_{\Lambda_k}B_\Omega(T_N+ \tau,q_m(T_N+\tau),q_j(T_N+\tau)) \Big) d\tau.
\end{aligned}
\eeq

Applying Lemma \ref{polylem}(i), we obtain $p_{N+1,k}\in \SP_1(R_{\Lambda_k}H)$, and, by \eqref{zsquare},
\beq  \label{tilwNk}
\begin{aligned}
|w_{N,k}(T_N+t)-p_{N+1,k}(t)|_{\alpha,\sigma}^2
&\le 2 e^{-2(\Lambda_k-\mu_{N+1})t} |w_{N,k}(T_N)- p_{N+1,k}(0)|_{\alpha,\sigma}^2 \\
&\quad + 2t\int_0^t e^{-2(\mu_{N+2}-\mu_{N+1})(t-\tau)} |R_{\Lambda_k} H_N(T_N+\tau)|_{\alpha,\sigma}^2 d\tau.
\end{aligned}
\eeq

\medskip
Denote $z_{N,k}(t)= w_{N,k}(T_N+t)-p_{N+1,k}(t)$, which is the remainder on the left-hand side of \eqref{smallk} and \eqref{tilwNk}.

On the one hand, because there are only finitely many $k$'s with $\Lambda_k\le \mu_{N+1}$, it follows from \eqref{smallk} that
\beq\label{sumsmall}
\sum_{\Lambda_k\le \mu_{N+1}} |z_{N,k}(t)|_{\alpha,\sigma}^2=\mathcal O(e^{-2\delta_N t}).
\eeq

On the other hand, summing inequality \eqref{tilwNk} in $k$ for which $\Lambda_k\ge \mu_{N+2}$, we obtain
\begin{align}
\sum_{\Lambda_k\ge \mu_{N+2}} |z_{N,k}(t)|_{\alpha,\sigma}^2
&\le  4 e^{-2(\mu_{N+2}-\mu_{N+1})t} \sum_{\Lambda_k\ge \mu_{N+2}}  |w_{N,k}(T_N)- p_{N+1,k}(0)|_{\alpha,\sigma}^2 \notag \\
&\quad + 2t\int_0^t e^{-2(\mu_{N+2}-\mu_{N+1})(t-\tau)}\sum_{\Lambda_k\ge \mu_{N+2}} |R_{\Lambda_k} H_N(T_N+\tau)|_{\alpha,\sigma}^2 d\tau.\label{zz}
\end{align}

For the terms on the right-hand side of \eqref{zz}, we have
\begin{align*}
\sum_{\Lambda_k\ge \mu_{N+2}}  |w_{N,k}(T_N)- p_{N+1,k}(0)|_{\alpha,\sigma}^2
&\le 2\sum_{\Lambda_k\ge \mu_{N+2}}  |R_{\Lambda_k} w_N(T_N)|_{\alpha,\sigma}^2+2\sum_{\Lambda_k\ge \mu_{N+2}}|p_{N+1,k}(0)|_{\alpha,\sigma}^2\\
&\le 2|w_N(T_N)|_{\alpha,\sigma}^2+2\sum_{\Lambda_k\ge \mu_{N+2}}|p_{N+1,k}(0)|_{\alpha,\sigma}^2.
\end{align*}
It follows from \eqref{B4L} and \eqref{i2} that $p_{N+1,k}\equiv 0$, for $\Lambda_k>\max\{4\Lambda,\mu_{N+1}\}$.
Thus, $$\sum_{\Lambda_k\ge \mu_{N+2}}|p_{N+1,k}(0)|_{\alpha,\sigma}^2\text{ is a finite sum, and hence is finite.}
$$
Also, the last integral in \eqref{zz} has
\beqs
\sum_{\Lambda_k\ge \mu_{N+2}} |R_{\Lambda_k} H_N(T_N+\tau)|_{\alpha,\sigma}^2\le |H_N(T_N+\tau)|_{\alpha,\sigma}^2\le M_N^2 e^{-2\delta_N \tau}.
\eeqs

Therefore, there exists $C_0>0$ such that
\begin{align*}
\sum_{\Lambda_k\ge \mu_{N+2}} |z_{N,k}(t)|_{\alpha,\sigma}^2
&\le  4C_0 e^{-2(\mu_{N+2}-\mu_{N+1})t} + 2M_N^2t e^{-2(\mu_{N+2}-\mu_{N+1})t} \int_0^t e^{2(\mu_{N+2}-\mu_{N+1}-\delta_N)\tau} d\tau.
\end{align*}

Recall that $\delta_N<\mu_{N+2}-\mu_{N+1}$ in \eqref{HNrate}. Calculating the last integral explicitly, we easily find
\beq\label{sumlarge}
\sum_{\Lambda_k\ge \mu_{N+2}} |z_{N,k}(t)|_{\alpha,\sigma}^2
\le  4C_0 e^{-2(\mu_{N+2}-\mu_{N+1})t} +  \frac{M_N^2 t  e^{-2\delta_N t}}{\mu_{N+2}-\mu_{N+1}-\delta_N}\le C_N e^{-\delta_N t},
\eeq
for some constant $C_N>0$.

Combining \eqref{sumsmall} and \eqref{sumlarge} gives
\beq\label{sumall}
\sum_{k=1}^\infty |z_{N,k}(t)|_{\alpha,\sigma}^2=\mathcal O(e^{-\delta_N t}).
\eeq

(d) By the first limit in \eqref{speclim}, we can select a number $\Lambda_*\in\sigma(A)$ to be the smallest $\Lambda_n$ such that $\Lambda_n\ge\max\{4\Lambda,\mu_{N+1}\}$.
By \eqref{BLam} and formulas \eqref{p1}, \eqref{i1}, \eqref{i2}, we have
\beq \label{pprop}
p_{N+1,k}\in \SP_1(P_{\Lambda_k}H)\subset \SP_1(P_{\Lambda_*}H),\text{ for }\Lambda_k\le \Lambda_*,
 \text{ and }
p_{N+1,k}=0, \text{ for }\Lambda_k>\Lambda_*.
\eeq

Define, for $t\in\R$,
\beq\label{qq}
q_{N+1}(t)=\sum_{k=1}^\infty p_{N+1,k}(t-T_N).
\eeq
Thanks to \eqref{pprop}, the sum in \eqref{qq} is only a finite sum, and
$q_{N+1}\in \SP_1(P_{\Lambda_*}H)$.

Obviously,
\beq \label{Rqp}
R_{\Lambda_k}q_{N+1}(t+T_N)=p_{N+1,k}(t),
\eeq
hence, $z_{N,k}(t)=R_{\Lambda_k}\big(w_N(t+T_N)-q_{N+1}(t+T_N)\big)$.
It follows from \eqref{sumall} that
\beqs
|w_{N}(t+T_N)-q_{N+1}(t+T_N)|_{\alpha,\sigma}^2=\sum_{k=1}^\infty |z_{N,k}(t)|_{\alpha,\sigma}^2
=\mathcal O(e^{-\delta_N t}),
\eeqs
which implies that
\beqs
|w_{N}(t)-q_{N+1}(t)|_{\alpha,\sigma}=\mathcal O(e^{-\delta_N t/2}).
\eeqs
Multiplying this equation by $e^{-\mu_{N+1}t}$, we obtain the desired statement \eqref{vnrem} for $N+1$.

(e) It remains to prove the ODE \eqref{vneq}, for $n=N+1$. By \eqref{pode2} of Lemma \ref{polylem}, we have, for each $k$, that
$$p_{N+1,k}'(t)+(\Lambda_k-\mu_{N+1})p_{N+1,k}(t)+ \sum_{\stackrel{1\le m,j\le N}{\mu_m+\mu_j=\mu_{N+1}}} R_{\Lambda_k}B_{\Omega}(T_N+t,q_m(T_N+t),q_j(T_N+t)) =0.$$

From this and \eqref{Rqp}, we deduce
$$(R_{\Lambda_k} q_{N+1}(t) )'+(\Lambda_k-\mu_{N+1})R_{\Lambda_k}q_{N+1}(t) + R_{\Lambda_k}\sum_{\stackrel{1\le m,j\le N}{\mu_m+\mu_j=\mu_{N+1}}} B_{\Omega}(t,q_m(t),q_j(t)) =0.$$

Multiplying this equation by $e^{-\mu_{N+1}t}$ yields
$$(R_{\Lambda_k} v_{N+1}(t))'+A(R_{\Lambda_k}v_{N+1}(t)) + R_{\Lambda_k}\sum_{\stackrel{1\le m,j\le N}{\mu_m+\mu_j=\mu_{N+1}}} B_{\Omega}(t,v_m(t),v_j(t)) =0,$$
where $v_{N+1}(t)= q_{N+1}(t)e^{-\mu_{N+1}t}$.
Note in this equation that $v_{N+1}(t)$ and  $B_{\Omega}(t,v_m(t),v_j(t))$ all belong to $P_{\Lambda_*}H$.
Then summing up the equation in $k$, for which $\Lambda_k\le \Lambda_*$, yields that  the ODE system \eqref{vneq} holds for $n=N+1$.
According to Remark \ref{uniG}, the S-polynomial $q_{N+1}$ is independent of $\alpha$.
Therefore, the statement \textbf{(H$_{N+1}$)} is proved.

By the induction principle, the statement \textbf{(H$_N$)} holds true for all $N$, i.e., the Claim is proved.

Now, we note that at step $n=N+1$, the $q_n$'s, for $n=1,2,\ldots,N$, are those from the step $n=N$, and are used in the construction of $q_{N+1}$. Therefore, for each $\sigma>0$, such recursive construction gives the existence of  the S-polynomials $q_n$'s, for all $n\in\N$. By Remark \ref{uniG} again, all these $q_n$'s are, in fact,  independent of $\sigma$. Therefore, \eqref{vnrem} holds true for all $N$, $\alpha$, $\sigma$. Then estimate \eqref{vremain} follows thanks to \eqref{ex3}.
Finally, by Lemma \ref{unilem}, the S-polynomials $q_n$'s are unique.
The proof is complete.
\end{proof}

\begin{remark}
 The statement and proof of Theorem \ref{vthm} can be presented simply with $G_{0,\sigma}$ for all $\sigma\ge 0$. Nonetheless, general calculations in the above proof for $G_{\alpha,\sigma}$ are flexible and can be applied to cover the case when a non-potential force is included in the NSE and has limited regularity in $G_{\alpha,\sigma}$ for a fixed $\sigma$, see \cite{HM2,CaH1,CaH2}.
\end{remark}

\begin{remark}\label{limrmk}
 It is not known whether the polynomials $Q_n$'s in Theorem \ref{uthm} have any limits as $|\Omega|\to\infty$. However, such a question can be answered for some special solutions,
 see Remark \ref{lim2rmk} below.
\end{remark}

\section{The case of non-zero spatial average solutions}
\label{nonzerosec}

In this section, we establish the asymptotic expansions for the solutions with non-zero spatial averages.
Below, $C_{\vecx,t}^{2,1}$ denotes the class of continuous functions $f(\vecx,t)$ with continuous partial derivatives $\partial f/\partial x_k,\partial^2 f/\partial x_k\partial x_j$ (for $k,j=1,2,3$,) and $\partial f/\partial t$.
Notation $C_{\vecx}^{1}$ is meant similarly.

\begin{assumption}
 Throughout this section, $\vecu(\vecx,t)\in C_{\vecx,t}^{2,1}(\R^3\times (0,\infty))\cap C(\R^3\times[0,\infty))$ and $p(\vecx,t)\in C_{\vecx}^{1}(\R^3\times (0,\infty))$
 are $\vecL$-periodic  functions that form a solution $(\vecu,p)$ of the NSE \eqref{NPDE} and \eqref{divfree}.
\end{assumption}

\medskip
Note that any $\vecL$-periodic function on $\R^3$ can be considered as a function on the flat torus $\domL$.
In particular, $u(t)=\vecu(\cdot,t)$ and $p(t)=p(\cdot,t)$ can be considered as functions on $\domL$.

Suppose $\vecf$ is a $\vecL$-periodic vector field on $\R^3$ and $\vecf\in \mathcal D(A^\alpha e^{\sigma A^{1/2}})$, for some $\alpha,\sigma\ge 0$.
Let $\vecg(\vecx)=\vecf(\vecx+\vecX_0)$, for some fixed $\vecX_0\in\R^3$, then $\vecg\in H$.
Let $\vecf(\vecx)=\sump  \widehat\vecf_\veck e^{i \widecheck \veck\cdot \vecx}$, then
\beqs
\vecg(\vecx)=\sump  \widehat\vecg_\veck e^{i \widecheck \veck\cdot \vecx}, \text{ where } \widehat\vecg_\veck=\widehat\vecf_\veck e^{i\widecheck\veck\cdot \vecX_0}.
\eeqs
It follows that
$|\widehat\vecg_\veck|=|\widehat\vecf_\veck|$, and consequently,
 \beq\label{normshift}
| \vecf(\cdot+\vecX_0) |_{\alpha,\sigma}=| \vecg(\cdot) |_{\alpha,\sigma}=| \vecf(\cdot)|_{\alpha,\sigma}.
 \eeq

\medskip
Returning to the solution $(\vecu,p)$, denote, for $t\ge 0$,
\beqs
\vecU(t)=\frac1{L_1L_2L_3}\int_\domL \vecu(\vecx,t)d\vecx.
\eeqs

Integrating equation \eqref{NPDE} over the domain $\domL$ gives
\beqs
\vecU'(t)+\Omega J \vecU(t)=0,\quad t>0.
\eeqs
Hence,
\beq \label{Ut}
\vecU(t)=e^{-\Omega t J} \vecU(0)=\begin{pmatrix}
                           \cos(\Omega t)& \sin(\Omega t)&0\\
                           -\sin(\Omega t)&\cos(\Omega t)&0\\
                           0&0&1
                          \end{pmatrix} \vecU(0).
\eeq

\begin{theorem}\label{SSthm}
There exist $\mathcal V$-valued SS-polynomials $\mathcal Q_n(t)$'s, for all $n\in \N$, such that
 \beq\label{uU}
 u(t)-\vecU(t)\sim \sum_{n=1}^\infty \mathcal Q_n(t) e^{-\mu_n t} \text{ in } G_{\alpha,\sigma},\text { for all }\alpha,\sigma>0.
 \eeq
\end{theorem}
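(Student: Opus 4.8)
The plan is to reduce Theorem~\ref{SSthm} to the zero-average result Theorem~\ref{uthm} by a time-dependent Galilean-type change of the spatial variable that removes the mean flow $\vecU(t)$. Since $\vecU$ solves $\vecU'+\Omega J\vecU=0$ by \eqref{Ut}, I first set $\vecX(t)=\int_0^t \vecU(s)\,ds$ and define $\vecw(\vecx,t)=\vecu(\vecx+\vecX(t),t)-\vecU(t)$. From the explicit form of $\vecU(t)$ in \eqref{Ut}, the first two components of $\vecX(t)$ are of the type $a\cos(\Omega t)+b\sin(\Omega t)+{}$const, while the third is $U_3(0)\,t$; hence for every $\veck$ the phase $\widecheck\veck\cdot\vecX(t)$ is exactly of the double-sinusoidal form $a\cos(\Omega t)+b\sin(\Omega t)+ct+d$ appearing in Definition~\ref{polyS}(b). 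This is the structural reason why SS-polynomials, rather than S-polynomials, govern the expansion here.

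Next I would verify that $\vecw$ is a zero-average solution of the rotating \NSE. Differentiating and substituting \eqref{NPDE} evaluated at $\vecx+\vecX(t)$, then rewriting everything in terms of $\vecw$, produces the rotating \NSE\ for $\vecw$ (with a modified but still periodic pressure) plus two extra contributions: the advection term $(\vecX'(t)-\vecU(t))\cdot\nabla\vecw$ and the spatially constant term $\vecU'(t)+\Omega J\vecU(t)$. The choice $\vecX'=\vecU$ annihilates the former, and \eqref{Ut} annihilates the latter; moreover $\vecw$ has zero spatial average by periodicity of $\vecu$. Applying the Leray projection $\PL$ yields \eqref{NSO} for $\vecw$, and since $\vecu$ is classical, $\vecw$ is a (regular, hence Leray-Hopf) solution to which Theorem~\ref{uthm} applies. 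This produces $\mathcal V$-valued S-polynomials $Q_n$ with $\vecw(t)\sim\sum_n Q_n(t)e^{-\mu_n t}$ in every $G_{\alpha,\sigma}$.

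It remains to translate back. Writing $(\mathcal T_t g)(\vecx)=g(\vecx-\vecX(t))$, we have $u(t)-\vecU(t)=\mathcal T_t\vecw(t)$, so on the Fourier side $\mathcal T_t$ multiplies the coefficient of $e^{i\widecheck\veck\cdot\vecx}$ by $e^{-i\widecheck\veck\cdot\vecX(t)}$. Set $\mathcal Q_n(t)=\mathcal T_t Q_n(t)$. Decomposing $Q_n$ by Lemma~\ref{SPlem1} into modes with coefficients in $\SP_1(X_\veck)$, multiplying each by $\cos(\widecheck\veck\cdot\vecX(t))$ and $\sin(\widecheck\veck\cdot\vecX(t))$, and applying product-to-sum identities shows each new coefficient lies in $\SP_2(X_\veck)$; the reality condition $\mathcal Q_{n,-\veck}=\overline{\mathcal Q_{n,\veck}}$ is preserved because $\vecX$ is real-valued, so Lemma~\ref{SPlem1} reassembles $\mathcal Q_n\in\SP_2(\mathcal V)$. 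Finally, the spatial translation is a Gevrey isometry by \eqref{normshift}, whence
$$\Big|u(t)-\vecU(t)-\sum_{n=1}^N\mathcal Q_n(t)e^{-\mu_n t}\Big|_{\alpha,\sigma}=\Big|\vecw(t)-\sum_{n=1}^N Q_n(t)e^{-\mu_n t}\Big|_{\alpha,\sigma},$$
and the right-hand side is $\mathcal O(e^{-\mu t})$ for all $\mu\in(\mu_N,\mu_{N+1})$ by Theorem~\ref{uthm}, which gives \eqref{uU}.

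The main obstacle is the back-translation step: one must show that composing the S-polynomial expansion with the time-dependent shift $\vecx\mapsto\vecx-\vecX(t)$ yields SS-polynomials. This is precisely where the extra sinusoidal nesting is created, and it requires an SS-analogue of the multiplication closure in Lemma~\ref{Spolyinvar}(ii)---namely that multiplying an element of $\SP_1(X_\veck)$ by $\cos$ or $\sin$ of a phase $a\cos(\Omega t)+b\sin(\Omega t)+ct+d$ remains in $\SP_2(X_\veck)$---which I would record as a short lemma proved by the product-to-sum formulas. Getting the algebra of $\vecX(t)$ and the cancellations $\vecX'=\vecU$, $\vecU'=-\Omega J\vecU$ exactly right in the equation for $\vecw$ is the other point demanding care; once these are in place, the isometry \eqref{normshift} makes the remainder estimate immediate.
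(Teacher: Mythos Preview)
Your proposal is correct and follows essentially the same route as the paper: the paper also introduces $\vecV(t)=\int_0^t \vecU(\tau)\,d\tau$, applies the hyper-Galilean transformation $\vecw(\vecx,t)=\vecu(\vecx+\vecV(t),t)-\vecU(t)$ to obtain a zero-average solution, invokes Theorem~\ref{uthm}, and then translates back, using \eqref{normshift} for the remainder estimate and the product-to-sum identities on the Fourier side (via Lemma~\ref{SPlem1}) to verify $\mathcal Q_n\in\SP_2(\mathcal V)$. The only cosmetic difference is that the paper states the cancellation verification for $\vecw$ more tersely and does not isolate the $\SP_1\cdot\cos(\text{SS-phase})\subset\SP_2$ closure as a separate lemma.
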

\begin{proof}
For $t\ge 0$, define
$\vecV(t)=\int_0^t \vecU(\tau)d\tau$,
which, by \eqref{Ut}, is
\beq \label{Vt}
\vecV(t)= \frac1\Omega\begin{pmatrix}
                           \sin(\Omega t)& 1-\cos(\Omega t)&0\\
                           \cos(\Omega t)-1&\sin(\Omega t)&0\\
                           0&0& \Omega t
                          \end{pmatrix} \vecU(0).
\eeq

We use the following hyper-Galilean transformation \cite{Foias-TAMU2002}:
\beq\label{Gtrans}
\vecw(\vecx,t)=\vecu(\vecx+\vecV(t),t)-\vecU(t), \quad \vartheta (\vecx,t)= p(\vecx+\vecV(t),t).
\eeq

By simple calculations, one can verify
\beqs
\vecw_t-\Delta \vecw +(\vecw\cdot \nabla) \vecw+\Omega J\vecw=-\nabla \vartheta
\text{ and } {\rm div }\, \vecw=0,
\eeqs
that is, $(\vecw,\vartheta)$ is also a classical solution of the system \eqref{NPDE} and \eqref{divfree} on $\R^3\times(0,\infty)$.

Note that  $(\vecw,\vartheta)$ is $\vecL$-periodic, and  $\vecw(\cdot,t)$ has zero average for each $t\ge 0$.
Applying Theorem \ref{uthm} to the solution $w(t)=\vecw(\cdot,t)$ we obtain
 \beq\label{wexp}
 w(t)\sim \sum_{n=1}^\infty Q_n(t) e^{-\mu_n t} \text{ in } G_{\alpha,\sigma },\text{ for all }\alpha,\sigma> 0,
 \eeq
 where  $Q_n(t)$'s are $\mathcal V$-valued S-polynomials.

Assume each $Q_n(t)$ is a mapping $\vecx\mapsto \vecQ_n(\vecx,t)$.
Note that each $Q_n(t)$ belongs to $\mathcal V$, hence $\vecQ_n(\vecx,t)$, as a function of $\vecx\in\R^3$, is $\vecL$-periodic.
Let $N\in\N$, $\alpha,\sigma>0$ and $\mu\in (\mu_N,\mu_{N+1})$. We have from \eqref{wexp} and \eqref{ex3} that
 \beqs
\Big| \vecu(\vecx+\vecV(t),t)-\vecU(t)-\sum_{n=1}^N \vecQ_n(\vecx,t)e^{-\mu_n t} \Big|_{\alpha,\sigma}=\mathcal O(e^{-\mu t}).
 \eeqs
This and \eqref{normshift} imply
  \beqs
\Big| \vecu(\vecx,t)-\vecU(t)-\sum_{n=1}^N \vecQ_n(\vecx-\vecV(t),t)e^{-\mu_n t} \Big|_{\alpha,\sigma}=\mathcal O(e^{-\mu t}),
 \eeqs
which means
 \beq\label{uUQ}
\Big| u(t)-\vecU(t)-\sum_{n=1}^N \mathcal Q_n(t) e^{-\mu_n t} \Big|_{\alpha,\sigma}=\mathcal O(e^{-\mu t}),
 \eeq
where  $\mathcal Q_n(t)=\vecQ_n(\cdot-\vecV(t),t)$ for $n\in\N$.
It remains to prove that $\mathcal Q_n\in \SP_2(\mathcal V)$, for all $n\in \N$.
Suppose
$$ \vecQ_n(\vecx,t)=\sump_{\rm{finitely\ many\ }\veck} \widehat \vecQ_{n,\veck}(t) e^{i\widecheck\veck\cdot\vecx},\text{ with }\widehat \vecQ_{n,\veck}(t)\in \SP_1(X_\veck).$$
 Then
 \beqs
\mathcal Q_n(t)
=\sump_{\rm{finitely\ many\ }\veck} \widehat \vecQ_{n,\veck}(t)e^{-i\widecheck\veck\cdot \vecV(t)}e^{i \widecheck\veck\cdot\vecx}
=\sump_{\rm{finitely\ many\ }\veck} \widehat {\mathcal Q}_{n,\veck}(t) e^{i\widecheck\veck\cdot\vecx},
 \eeqs
 where
 $\widehat {\mathcal Q}_{n,\veck}(t)=\widehat \vecQ_{n,\veck}(t)\big[\cos(\widecheck\veck\cdot \vecV(t))-i \sin(\widecheck\veck\cdot \vecV(t))\big]$.

Using the formula of $\vecV(t)$ in \eqref{Vt}, one can see that
\beqs
\widecheck\veck\cdot \vecV(t)=r_1\cos(\Omega t)+r_2\sin(\Omega t)+r_3 t + r_4,
\eeqs
for some numbers $r_2,r_2,r_3,r_4\in \R$, that depend on $\widecheck\veck$.

By using the product to sum formulas between trigonometric functions, we have
$$
\widehat \vecQ_{n,\veck}(t) \cos(\widecheck\veck\cdot \vecV(t)), \widehat \vecQ_{n,\veck}(t)  \sin(\widecheck\veck\cdot \vecV(t))
\in \SP_2(X_\veck).
$$
Therefore, $\widehat {\mathcal Q}_{n,\veck}(t)\in \SP_2(X_\veck)$, and, by Lemma \ref{SPlem1}, $\mathcal Q_n(t)\in \SP_2(\mathcal V)$.
With this fact, we obtain the expansion \eqref{uU} from estimate \eqref{uUQ}.
\end{proof}

\begin{remark}
We can roughly rewrite \eqref{uU} as an asymptotic expansion of $u(t)$ as
 \beqs
 u(t)\sim  \vecU(t) + \sum_{n=1}^\infty \mathcal Q_n(t) e^{-\mu_n t} .
 \eeqs
This shows that $\vecU(t)$ is the leading order term in the asymptotic approximation, as $t\to\infty$,  i.e., the  $\mathcal O(1)$ term, is explicitly determined by \eqref{Ut}, and is responsible for the non-zero average of $u(t)$. The following order terms in the approximation are all exponentially decaying and are with zero spatial averages.
\end{remark}

\section{Some special solutions}\label{specialsec}

We present next some special solutions of \eqref{NPDE} and \eqref{divfree}. They are inspired by examples in \cite{FS84b,FHN1,FHN2} for the case without rotation.

Let $\veck=(k_1,k_2,k_3)\in \Z^3\setminus\{\mathbf 0\}$ with $\widetilde\veck=(\widetilde k_1,\widetilde k_2,\widetilde k_3)$ be fixed.
For any $m\in\Z_*=\Z\setminus\{0\}$, we note from \eqref{kcheck}, \eqref{ktil} and \eqref{Jk} that
\beq \label{mkprop}
(m\veck)\widecheck{}=m\widecheck\veck,\quad
\widetilde{m\veck}={\rm sgn}(m)\widetilde\veck,\text{ and }
J_{m\veck}=\rm{sgn }(m)J_\veck.
\eeq

We define $V_\veck$ to be the space of all $u\in V$ such that
\beq \label{uset}
u=\sum _{m\in\Z_*} \widehat \vecu_{m\veck} e^{i m \widecheck\veck\cdot\vecx}.
\eeq

By \eqref{unabv},  one immediately sees that
\beq\label{specialB}
(u\cdot \nabla)v=0, \text{ for all } u,v\in V_\veck.
\eeq

Let $u\in V_\veck$ as in \eqref{uset} and $t\in\R$.  We have, by \eqref{etS}, \eqref{uset} and \eqref{mkprop},
\beqs
e^{tS}u=\sum_{m\in\Z_*} E_\veck(\widetilde k_3 t)\widehat \vecu_{m\veck} e^{i m \widecheck\veck\cdot\vecx}.
\eeqs
Hence
\beq\label{euV}
e^{tS}u\in V_\veck.
\eeq

Combining \eqref{specialB} and \eqref{euV} yields
\beq\label{specialBS}
[(e^{tS}u)\cdot\nabla ](e^{tS}v)=0,\text{ for all }u,v\in V_\veck \text{ and } t\in \R.
\eeq

\begin{theorem}\label{specthm1}
Let $\veck\in \Z^3\setminus\{\mathbf 0\}$ be fixed and $u_0\in V_\veck$, then problem \eqref{NSO}, with initial condition $u(0)=u_0$, has a unique global regular solution
 \beq\label{specialu}
 u(t)=e^{-tA}e^{-\Omega t S}u_0,\text{ for } t\ge 0.
 \eeq
Moreover, $u(t)$ solves the linear equation
\beq\label{linearrot}
u_t + Au + \Omega S u =0, \text{ for } t>0.
\eeq
 \end{theorem}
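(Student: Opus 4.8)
The plan is to exploit the fact that the nonlinear term of \eqref{NSO} degenerates on the invariant subspace $V_\veck$, reducing the problem to the explicitly solvable linear equation \eqref{linearrot}. First I would introduce the candidate $u(t)=e^{-tA}e^{-\Omega tS}u_0$ and check that it stays in $V_\veck$ for all $t\ge 0$. Since $A$ acts diagonally on the Fourier modes $e^{im\widecheck\veck\cdot\vecx}$, multiplying the $m$-th mode by $m^2|\widecheck\veck|^2$ (using \eqref{mkprop}), the analytic semigroup $e^{-tA}$ preserves the form \eqref{uset} and hence maps $V_\veck$ into itself; together with the invariance \eqref{euV} applied at time $-\Omega t$, this yields $u(t)\in V_\veck$ for every $t\ge 0$. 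The smoothing property of $e^{-tA}$ moreover places $u(t)$ in $\mD(A^j)$ for all $j$ when $t>0$, so that $u\in C([0,\infty),V)\cap L^2_{\rm loc}([0,\infty),\mD(A))$ with $u'\in L^2_{\rm loc}([0,\infty),H)$, and $u$ meets the regularity requirements of a global regular solution in Definition \ref{lhdef}.

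Next I would verify that this $u(t)$ solves \eqref{NSO}. Because $A$ and $S$ commute by \eqref{SAcom}, the two exponential factors commute, and differentiating directly gives $u'(t)=-Au(t)-\Omega Su(t)$; thus $u$ solves the linear equation \eqref{linearrot}. Since $u(t)\in V_\veck\cap\mD(A)$ for $t>0$, the vanishing relation \eqref{specialB} together with \eqref{defBuv} gives $B(u(t),u(t))=\PL\big((u(t)\cdot\nabla)u(t)\big)=0$, so the nonlinear term is absent along $u$ and \eqref{linearrot} coincides with \eqref{NSO}. Hence $u(t)=e^{-tA}e^{-\Omega tS}u_0$ is a global regular solution of \eqref{NSO} with $u(0)=u_0$; the energy identity needed for the Leray--Hopf class holds here with equality, because testing \eqref{linearrot} against $u$ and using $\langle Su,u\rangle=0$ from \eqref{Sper} yields $\frac12\frac{d}{dt}|u|^2+\|u\|^2=0$.

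For uniqueness I would appeal to the local well-posedness in Theorem \ref{typicalthm}(ii), which provides a unique regular solution on some interval $[0,T)$. The explicit $u$ just constructed is regular and defined for all $t\ge0$, so it must agree on $[0,T)$ with that unique local solution. A standard continuation argument then upgrades this to global uniqueness: the set of times at which any second global regular solution coincides with $u$ is nonempty, closed by continuity in $V$, and open by local uniqueness restarted at each time, hence equals $[0,\infty)$.

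The substantive content of the theorem is the identity $B(u,u)=0$ on $V_\veck$, which is already supplied by \eqref{specialB}; with that in hand the Navier--Stokes flow on $V_\veck$ collapses to the linear flow $e^{-t(A+\Omega S)}=e^{-tA}e^{-\Omega tS}$, and the remaining work is routine. The only steps demanding care are confirming that the candidate satisfies every clause of Definition \ref{lhdef} (in particular the energy inequality and the weak formulation) and promoting the local uniqueness of Theorem \ref{typicalthm}(ii) to uniqueness among all global regular solutions; neither presents a genuine obstacle.
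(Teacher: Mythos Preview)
Your proposal is correct and follows essentially the same route as the paper: define the candidate $u(t)=e^{-tA}e^{-\Omega tS}u_0$, verify it solves the linear equation \eqref{linearrot} via the commutation \eqref{SAcom}, use the vanishing of the nonlinearity on $V_\veck$ (the paper cites \eqref{specialBS}, you equivalently show $u(t)\in V_\veck$ and invoke \eqref{specialB}) to conclude it solves \eqref{NSO}, and then appeal to uniqueness of regular solutions. Your write-up is a bit more explicit on the continuation argument for uniqueness and the energy identity, while the paper compresses these into a single sentence; no genuine difference in approach.
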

\begin{proof}
 Let $u_0=\sum _{m\in\Z_*} \widehat \vecu_{0,m\veck} e^{i m \widecheck\veck\cdot\vecx}\in V_\veck$.
Set $v(t)=e^{-tA}u_0$. First, we have
\beq\label{stokesv}
v_t+Av=0,\text{ for all }t>0.
\eeq

Note that $u(t)$ defined by \eqref{specialu} equals $e^{-\Omega t S} v(t)$. Then $u(0)=u_0$ and, by \eqref{stokesv},  $u(t)$ solves
\eqref{linearrot}.

Clearly, $v(t)\in V_\veck$, for all $t\ge 0$, hence,   by \eqref{specialBS},
\beq\label{strongb}
(u(t)\cdot\nabla) u(t) =0,
\eeq
which gives $B(u(t),u(t))=0$. The last fact and \eqref{linearrot} imply that $u(t)$ also solves \eqref{NSO}.

Since $u(t)$ is a global, regular solution, it is unique.
\end{proof}

\begin{remark}\label{heli}
We find some special solutions for which the helicity $\mathcal H(t)\eqdef \inprod{\nabla\times u(t),u(t)}$ vanishes for all $t\ge 0$.
(See \cite{Moffatt69,Moffatt92} for the physics of helicity, and \cite{FHN1,FHN2} for its analysis in the case of non-rotating fluids.)
We consider the solution in Theorem \ref{specthm1}, which can be written explicitly as
\beq\label{helisol}
u(t)=\sum_{m\in\Z_*} e^{-m^2|\widecheck\veck|^2 t}\Big\{\cos(\widetilde k_3\Omega t)\widehat \vecu_{0,m\veck} -\sin(\widetilde k_3\Omega t)\widetilde\veck\times \widehat \vecu_{0,m\veck}\Big\} e^{i m \widecheck\veck\cdot\vecx}.
\eeq

Then the vorticity is
\beqs
\nabla \times u(t)=\sum_{m\in\Z_*} e^{-m^2|\widecheck\veck|^2 t}im|\widecheck\veck| \Big\{\cos(\widetilde k_3\Omega t)\widetilde\veck\times\widehat \vecu_{0,m\veck} + \sin(\widetilde k_3\Omega t) \widehat \vecu_{0,m\veck}\Big\} e^{i m \widecheck\veck\cdot\vecx},
\eeqs
and the helicity is
\begin{align*}
\mathcal H(t)
&=L_1L_2L_3\sum_{m\in\Z_*} e^{-2m^2|\widecheck\veck|^2 t}im|\widecheck\veck|
\Big\{\cos^2(\widetilde k_3\Omega t)J_\veck\widehat \vecu_{0,m\veck}\cdot \overline{\widehat \vecu}_{0,m\veck}
- \sin^2(\widetilde k_3\Omega t) \widehat \vecu_{0,m\veck}\cdot J_\veck\overline{\widehat \vecu}_{0,m\veck}\\
&\quad
+ \cos(\widetilde k_3\Omega t)\sin(\widetilde k_3\Omega t)
\Big[\widehat \vecu_{0,m\veck}\cdot \overline{\widehat \vecu}_{0,m\veck}
-  J_\veck\widehat \vecu_{0,m\veck}\cdot J_\veck\overline{\widehat \vecu}_{0,m\veck}\Big]
\Big\}\\
&=L_1L_2L_3\sum_{m\in\Z_*} e^{-2m^2|\widecheck\veck|^2 t}im|\widecheck\veck|
(J_\veck\widehat \vecu_{0,m\veck}\cdot \overline{\widehat \vecu}_{0,m\veck}).
\end{align*}
Hence,
\begin{align*}
\mathcal H(t)
&= L_1L_2L_3\sum_{m\in\Z_*} e^{-2m^2|\widecheck\veck|^2 t} 2 m|\widecheck\veck|
\Big[ \big ({\rm Re}(\widehat \vecu_{0,m\veck})\times {\rm Im}(\widehat \vecu_{0,m\veck} )\big )\cdot \widetilde\veck\Big].
\end{align*}

Thus,  $\mathcal H(t)=0$ for all $t\ge 0$, provided that
\beq \label{helicond1}
\big ({\rm Re}(\widehat \vecu_{0,m\veck})\times {\rm Im}(\widehat \vecu_{0,m\veck} )\big )\cdot \widetilde\veck =0, \text{ for all }m\in \Z_*.
\eeq
However, since $\widehat \vecu_{0,m\veck}$ is orthogonal to $\widetilde\veck$ then ${\rm Re}(\widehat \vecu_{0,m\veck})\times {\rm Im}(\widehat \vecu_{0,m\veck})$ is co-linear with $\widetilde\veck$, and as result \eqref{helicond1} is equivalent to
\beq \label{helicond}
{\rm Re}(\widehat \vecu_{0,m\veck})\times {\rm Im}(\widehat \vecu_{0,m\veck})=0, \text{ for all }m\in \Z_*.
\eeq

This class of solutions \eqref{helisol}, \eqref{helicond} with vanishing helicity  for the NSE of rotating fluids has more restrictive wave vectors, i.e., the $m\veck$'s in \eqref{helisol}, than those studied in \cite[Proposition 6.4]{FHN1} for the NSE without the rotation.
\end{remark}

\begin{corollary}\label{cor53}
 There exist infinitely many vector spaces of infinite dimensions that are invariant under the NSE \eqref{NSO},
 and each space is not a subspace of any of the others.
\end{corollary}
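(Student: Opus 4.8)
The plan is to read the corollary off directly from Theorem \ref{specthm1}, taking the invariant spaces to be the $V_\veck$'s for a suitably chosen family of wave vectors. First I would record that each $V_\veck$ is forward invariant under the flow of \eqref{NSO}: given $u_0\in V_\veck$, Theorem \ref{specthm1} supplies the unique global regular solution $u(t)=e^{-tA}e^{-\Omega tS}u_0$, and since $e^{-\Omega tS}u_0\in V_\veck$ by \eqref{euV}, while the Stokes semigroup $e^{-tA}$ acts on each Fourier coefficient $\widehat\vecu_{m\veck}$ by the scalar $e^{-t|\widecheck{m\veck}|^2}$ and hence preserves the Fourier support $\{m\widecheck\veck:m\in\Z_*\}$, the solution satisfies $u(t)\in V_\veck$ for all $t\ge 0$. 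Next I would note that each $V_\veck$ is infinite dimensional, for instance by exhibiting the linearly independent real fields $z\,e^{im\widecheck\veck\cdot\vecx}+\bar z\,e^{-im\widecheck\veck\cdot\vecx}\in V_\veck$, one for each $m\in\N$, with a fixed $0\neq z\in X_\veck$; these lie in $V_\veck$ because $X_{m\veck}=X_\veck$ and the reality condition is built in, and they are independent since they occupy distinct Fourier modes.

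The heart of the argument is the incomparability of the resulting family. I would select infinitely many pairwise non-collinear vectors in $\Z^3\setminus\{\mathbf 0\}$, the simplest choice being $\veck^{(j)}=(1,j,0)$ for $j\in\N$: if $(1,i,0)$ and $(1,j,0)$ were collinear then $i=j$. The decisive observation is that the Fourier supports $S_j=\{m\veck^{(j)}:m\in\Z_*\}$ are pairwise disjoint, because a relation $m\veck^{(i)}=n\veck^{(j)}$ with $m,n\neq 0$ would force $\veck^{(i)}$ and $\veck^{(j)}$ to be collinear. Consequently any nonzero element of $V_{\veck^{(i)}}$ has nonempty Fourier support inside $S_i$, which is disjoint from $S_j$, so it cannot lie in $V_{\veck^{(j)}}$; hence $V_{\veck^{(i)}}\cap V_{\veck^{(j)}}=\{\mathbf 0\}$ for $i\neq j$.

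Finally, from $V_{\veck^{(i)}}\cap V_{\veck^{(j)}}=\{\mathbf 0\}$ together with the infinite dimensionality of each space, no $V_{\veck^{(i)}}$ can be contained in another $V_{\veck^{(j)}}$, since such an inclusion would drive a nonzero element of $V_{\veck^{(i)}}$ into the trivial intersection. This yields infinitely many infinite dimensional invariant subspaces, none of which is a subspace of any of the others. I do not anticipate a genuine obstacle, as the analytic content is already carried by Theorem \ref{specthm1}; the only point demanding care is the selection of the wave vectors. One must take them pairwise \emph{non-collinear} rather than, say, $(2,0,0)$ and $(3,0,0)$, whose supports overlap on the multiples of $6\vece_1$ and would produce nested, rather than incomparable, spaces.
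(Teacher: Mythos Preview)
Your proof is correct and follows precisely the paper's approach: invoke Theorem~\ref{specthm1} to get invariance of each $V_\veck$, observe each is infinite dimensional, and select infinitely many pairwise non-collinear $\veck$'s so that the corresponding $V_\veck$'s are mutually incomparable. Your write-up supplies the details the paper leaves implicit; the only inaccuracy is in your closing aside, where $V_{(2,0,0)}$ and $V_{(3,0,0)}$ are in fact still incomparable (neither contains the other) rather than nested---the genuine danger with collinear choices is a pair like $(1,0,0)$ and $(2,0,0)$, for which $V_{(2,0,0)}\subset V_{(1,0,0)}$.
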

\begin{proof}
According to Theorem \ref{specthm1}, each vector space $V_\veck$ is invariant under the NSE \eqref{NSO}, and each has infinite dimension.
 Moreover, there are infinite many $\veck$'s which are pairwise not co-linear. For those $\veck$'s, the corresponding $V_\veck$'s are the vector spaces for which the statement holds true.
\end{proof}

Next, we consider the case of non-zero spatial average solutions.

\begin{theorem}\label{specthm2}
 Suppose $u_0=\vecU_0+w_0$, where $\vecU_0$ is a constant vector in $\R^3$, and
\beqs
w_0=\sum _{m\in\Z_*} \widehat \vecu_{0,m\veck} e^{i m \widecheck\veck\cdot\vecx}\in V_\veck.
\eeqs

Let $\vecU(t)=e^{-\Omega t J}\vecU_0$ and $\vecV(t)=\int_0^t\vecU(\tau)d\tau$.
Define, for $\vecx\in\R^3$ and $t> 0$,
 \begin{align}
 \label{ulast}
\vecu(\vecx,t)&=\vecU(t)+\sum_{m\in\Z_*} e^{-m^2|\widecheck\veck|^2 t}e^{-i m \widecheck\veck\cdot \vecV(t)} E_\veck(-\widetilde k_3\Omega t)\widehat \vecu_{0,m\veck} e^{i m \widecheck\veck\cdot\vecx},\\
\label{plast}
p(\vecx,t)&=p_*(t)-\Omega \sum_{m\in\Z_*}\frac{i}{m |\widecheck\veck|} e^{-m^2|\widecheck\veck|^2 t}e^{-i m \widecheck\veck\cdot \vecV(t)} \notag \\
&\quad \cdot \big[ \cos(\widetilde k_3\Omega t) J \widetilde\veck  + \sin(\widetilde k_3\Omega t) \vece_3\big ]\cdot \widehat\vecu_{0,m\veck} e^{i m \widecheck\veck\cdot\vecx},
\end{align}
where $p_*(t)$ is any scalar function.
Then $(\vecu,p)$ is a solution of \eqref{NPDE} and \eqref{divfree} on $\R^3\times (0,\infty)$.
\end{theorem}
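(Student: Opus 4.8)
The plan is to obtain $(\vecu,p)$ by inverting the hyper-Galilean transformation \eqref{Gtrans} applied to the explicit zero-average solution of Theorem~\ref{specthm1}. First I would introduce
$$\vecw(\vecx,t)=\sum_{m\in\Z_*} e^{-m^2|\widecheck\veck|^2 t}\,E_\veck(-\widetilde k_3\Omega t)\,\widehat\vecu_{0,m\veck}\, e^{i m \widecheck\veck\cdot\vecx},$$
which is precisely the solution \eqref{helisol} with initial data $w_0\in V_\veck$. By Theorem~\ref{specthm1} and \eqref{specialB}, $\vecw$ has $(\vecw\cdot\nabla)\vecw=0$ and solves the linear equation \eqref{linearrot}, so $\vecw_t-\Delta\vecw+\Omega\PL J\vecw=0$; since each $\widehat\vecu_{0,m\veck}\in X_\veck$ and $E_\veck$ preserves $X_\veck$, the field $\vecw(\cdot,t)$ is divergence-free. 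Consequently $(\vecw,\vartheta)$ is a classical solution of \eqref{NPDE}--\eqref{divfree} with the zero-average pressure $\vartheta$ determined by $-\nabla\vartheta=\Omega(\mathrm{Id}-\PL)J\vecw$. The key observation is that \eqref{ulast} is nothing but $\vecu(\vecx,t)=\vecU(t)+\vecw(\vecx-\vecV(t),t)$, the spatial shift by $-\vecV(t)$ producing the factor $e^{-i m \widecheck\veck\cdot\vecV(t)}$.

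I would then verify \eqref{NPDE}--\eqref{divfree} for $(\vecu,p)$ directly, which is the computation behind \eqref{Gtrans} run in reverse. Differentiating $\vecu=\vecU(t)+\vecw(\vecx-\vecV(t),t)$ and using $\vecV'(t)=\vecU(t)$ together with the mean-flow identity $\vecU'(t)+\Omega J\vecU(t)=0$ coming from \eqref{Ut}, the Coriolis force on the mean cancels $\vecU'$, the two transport-by-the-mean contributions $\mp(\vecU\cdot\nabla)\vecw$ cancel, and $(\vecw\cdot\nabla)\vecw=0$. What survives is exactly $\vecw_t-\Delta\vecw+\Omega J\vecw$ evaluated at $(\vecx-\vecV(t),t)$, so \eqref{NPDE} holds with $p(\vecx,t)=\vartheta(\vecx-\vecV(t),t)+p_*(t)$, the arbitrary $p_*(t)$ absorbing any purely time-dependent term; divergence-freeness of $\vecu$ follows from that of $\vecw$ since $\vecU(t)$ is spatially constant. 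Smoothness of $(\vecu,p)$ on $\R^3\times(0,\infty)$ is immediate from the Gaussian factors $e^{-m^2|\widecheck\veck|^2 t}$.

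The only non-formal step, and the one I expect to be the main obstacle, is identifying $\vartheta$ explicitly and matching the resulting $p$ with \eqref{plast}. Writing $-\nabla\vartheta=\Omega(\mathrm{Id}-\PL)J\vecw$ in Fourier and using $\mathrm{Id}-\widehat P_{m\veck}=\widetilde\veck\,\widetilde\veck^{\rm T}$ gives $\widehat\vartheta_{m\veck}=\frac{i\Omega}{m|\widecheck\veck|}\,\widetilde\veck\cdot(J\widehat\vecw_{m\veck})$. Expanding $\widehat\vecw_{m\veck}=e^{-m^2|\widecheck\veck|^2 t}E_\veck(-\widetilde k_3\Omega t)\widehat\vecu_{0,m\veck}$ and exploiting the antisymmetry of $J$, the computation reduces to the linear-algebra identity $-(J\widetilde\veck)\cdot(J_\veck\vecz)=\vece_3\cdot\vecz$ for all $\vecz\in X_\veck$. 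This in turn follows from the explicit $3\times3$ evaluation $-(J\widetilde\veck)^{\rm T}J_\veck-\vece_3^{\rm T}=-\widetilde k_3\,\widetilde\veck^{\rm T}$, where $\widetilde k_1^2+\widetilde k_2^2=1-\widetilde k_3^2$ is used, and the right-hand side annihilates every vector orthogonal to $\widetilde\veck$. Substituting this back and restoring the shift factor $e^{-i m \widecheck\veck\cdot\vecV(t)}$ produces exactly \eqref{plast}, which completes the verification.
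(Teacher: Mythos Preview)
Your proposal is correct and follows essentially the same route as the paper: both start from the zero-average solution $\vecw$ of Theorem~\ref{specthm1}, invert the hyper-Galilean transformation \eqref{Gtrans} to obtain $\vecu(\vecx,t)=\vecU(t)+\vecw(\vecx-\vecV(t),t)$ and $p(\vecx,t)=\vartheta(\vecx-\vecV(t),t)+p_*(t)$, and then compute the pressure explicitly from the non-projected part of the Coriolis term. The only cosmetic difference is that the paper organizes the pressure calculation via the geostrophic balance $\Omega\,{\rm div}(J\vecw)=-\Delta q$ and the identity $J_\veck^2\vecz=-\vecz$, whereas you use $(\mathrm{Id}-\PL)J\vecw$ and the equivalent identity $-(J\widetilde\veck)^{\rm T}J_\veck-\vece_3^{\rm T}=-\widetilde k_3\,\widetilde\veck^{\rm T}$ on $X_\veck$; these yield the same Fourier coefficients and hence \eqref{plast}.
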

\begin{proof}
Let $w(t)=e^{-tA}e^{-t \Omega S} w_0$, which we write as $w(t)=\vecw(\cdot,t)$.
By Theorem \ref{specthm1}, particularly, \eqref{linearrot} and \eqref{strongb}, we have
\beqs
(\vecw\cdot\nabla) \vecw=0\text{ and } \vecw_t-\Delta \vecw + \Omega J\vecw=-\nabla q,
\eeqs
where the scalar function $q(\vecx,t)$ satisfies the geostrophic balance
\beqs
\Omega {\rm div }(J \vecw)=-\Delta q.
\eeqs

We solve this equation by
\beq\label{qone}
q(\vecx,t)=p_*(t)+\Omega (-\Delta)^{-1}{\rm div }(J \vecw(\vecx,t)),
\eeq
where the inverse operator $(-\Delta)^{-1}$ is meant to apply to functions having zero spatial average over $\domL$.

We note that $\vecw(\vecx,t)$ is Gevrey-regular, for each $t>0$. Therefore, the following calculations are valid in the classical sense.
Define
\beq \label{up}
\vecu(\vecx,t)=\vecw(\vecx-\vecV(t),t)+\vecU(t)\text{ and }p(\vecx,t)=q(\vecx-\vecV(t),t).
\eeq
(The functions $\vecu$ and $p$ in \eqref{up} will be proved to agree with those in \eqref{ulast} and \eqref{plast} later.)

Since $\vecw$ is divergence-free, then, clearly, so is $\vecu$.
We calculate, with the shorthand notation $\vecu=\vecu(\vecx,t)$ and $\vecw=\vecw(\vecx-\vecV(t),t)$,
\beqs
\vecu_t = \vecw_t - (\vecU\cdot\nabla )\vecw + \vecU'=\vecw_t - (\vecU\cdot\nabla )\vecw-\Omega J \vecU,
\eeqs
\beqs
(\vecu\cdot\nabla)\vecu=[(\vecw+\vecU)\cdot\nabla] \vecw=[\vecU\cdot\nabla] \vecw.
\eeqs
Therefore,
\begin{align*}
\Big[\vecu_t-\Delta\vecu+(\vecu\cdot\nabla)\vecu+\Omega J\vecu\Big]_{(\vecx,t)}
&=\Big[\vecw_t-\Delta \vecw  + \Omega J\vecw\Big]_{(\vecx-\vecV(t),t)} \\
&=-\Big[\nabla q\Big]_{(\vecx-\vecV(t),t)} =-\Big[\nabla p\Big]_{(\vecx,t)} .
\end{align*}

We conclude that $(\vecu,p)$ defined by \eqref{up} is a solution of \eqref{NPDE} and \eqref{divfree} on $\R^3\times (0,\infty)$.
It remains to calculate them explicitly.
First, we have
\beq\label{wlast}
\vecw(\vecx,t)=\sum_{m\in\Z_*} e^{-m^2|\widecheck\veck|^2 t}E_\veck(-\widetilde k_3\Omega t)\widehat \vecu_{0,m\veck} e^{i m \widecheck\veck\cdot\vecx}.
\eeq
Then, thanks to \eqref{up} and \eqref{wlast}, we obtain formula \eqref{ulast} for $\vecu(\vecx,t)$.
Next, we calculate ${\rm div}(J\vecw)$ by
\beq\label{divJ}
{\rm div}( J\vecw(\vecx,t))
 =\sum_{m\in\Z_*} e^{-m^2|\widecheck\veck|^2 t}i\,m \widecheck\veck^{\rm T} JE_\veck(-\widetilde k_3\Omega t)\widehat \vecu_{0,m\veck} e^{i m \widecheck\veck\cdot\vecx}.
\eeq
Denote $\vecz=\widehat \vecu_{0,m\veck} \in X_\veck$, then
\begin{align*}
\widecheck\veck^{\rm T} JE_\veck(t)\vecz
&=\cos(t)  |\widecheck\veck| \widetilde \veck \cdot  J\vecz +\sin(t) |\widecheck\veck| \widetilde \veck \cdot (\vece_3\times J_\veck \vecz)\\
&=|\widecheck\veck| \big[ \cos(t) (J^{\rm T}\widetilde \veck) \cdot \vecz -\sin(t) \vece_3\cdot J_\veck^2 \vecz\big]\\
&= |\widecheck\veck| [-\cos(t) J\widetilde\veck +\sin(t) \vece_3]\cdot \vecz.
\end{align*}
(We used the fact $J$ is anti-symmetric and relation \eqref{Jksq} below.)
Thus, together with  \eqref{qone} and \eqref{divJ},
\beq\label{qlast}
q(\vecx,t)=p_*(t)-\Omega\sum_{m\in\Z_*}\frac{i\,  e^{-m^2|\widecheck\veck|^2 t}}{m |\widecheck\veck|}
\big[ \cos(\widetilde k_3\Omega t) J \widetilde\veck  + \sin(\widetilde k_3\Omega t) \vece_3]\cdot \widehat\vecu_{0,m\veck} e^{i m \widecheck\veck\cdot\vecx}.
\eeq
From \eqref{up} and \eqref{qlast}, we obtain formula \eqref{plast} for $p(\vecx,t)$. The proof is complete.
\end{proof}

\begin{remark}\label{lim2rmk}
Let $\veck$ and $u_0$ be as in Theorem \ref{specthm1} and its proof.
We rewrite \eqref{specialu} as
\beq\label{uLam}
u(t)=\sum_{n=1}^\infty Q_{n,\Omega}(t)e^{-\Lambda_n t},
\eeq
where $Q_{n,\Omega}(t)=e^{-\Omega t S}R_{\Lambda_n}u_0$.
By \eqref{Sf1}, $Q_{n,\Omega}\in \SP_1(\mathcal V)$. Therefore, \eqref{uLam} is the asymptotic expansion of $u(t)$.

 By \eqref{etS} and \eqref{uset}, we have either $Q_{n,\Omega}=0$, or there is a unique number $m\in\N$ with $m^2 |\widecheck\veck|^2=\Lambda_n$ and
\beq\label{Qome}
Q_{n,\Omega}(t)=(\cos(\tilde k_3\Omega  t)I_3-\sin(\tilde k_3\Omega  t)J_\veck)
[ \widehat \vecu_{m\veck} e^{i m \widecheck\veck\cdot\vecx} + \widehat \vecu_{-m\veck} e^{-i m \widecheck\veck\cdot\vecx}] .
\eeq

In case $k_3=0$, we then have $Q_{n,\Omega}(t)=R_{\Lambda_n}u_0$ which is independent of $\Omega$.

Consider the case $k_3\ne 0$. Given $T>0$, define the time averaging function
 \beq\label{barQ}
 \bar Q_{n,\Omega}(t)=\frac1T\int_t^{t+T} Q_{n,\Omega}(\tau) d\tau.
 \eeq
One can see from \eqref{Qome} and \eqref{barQ} that
 \beqs
 \lim_{\Omega\to\pm\infty}\bar Q_{n,\Omega}(t)=0,\quad\text{ for any } t\in \R.
 \eeqs
\end{remark}

\appendix

\section{}\label{apd}
\begin{proof}[Proof of \eqref{etS}]
First, we have from \eqref{SF} that
\beq \label{SSt}
e^{tS}u=\sump  e^{t S_\veck}\widehat\vecu_\veck e^{i\widecheck\veck\cdot
\vecx},
\eeq
where $S_\veck=\widehat P_\veck J\widehat P_\veck $. Using formula \eqref{Pk} for $\widehat P_\veck $ and the fact $\widetilde\veck^{\rm T} J \widetilde \veck =0$,
we can compute
\beqs
S_\veck=J-\widetilde \veck \widetilde\veck^{\rm T}J-J\widetilde \veck \widetilde\veck^{\rm T}
+\widetilde \veck \widetilde\veck^{\rm T} J \widetilde \veck \widetilde\veck^{\rm T}
=J-\widetilde \veck \widetilde\veck^{\rm T}J+(\widetilde \veck \widetilde\veck^{\rm T}J)^{\rm T}.
\eeqs
With $|\widetilde \veck|^2=1$, we have
\beqs
S_\veck
=\begin{pmatrix}
  0 & -1 + \widetilde k_1^2 + \widetilde k_2^2 & \widetilde k_3 \widetilde k_2\\
  1- \widetilde k_1^2 - \widetilde k_2^2   & 0 & -\widetilde k_3 \widetilde k_1 \\
  -\widetilde k_3 \widetilde k_2 & \widetilde k_3 \widetilde k_1 &0
 \end{pmatrix}
 =\begin{pmatrix}
  0 & -\widetilde k_3^2 & \widetilde k_3 \widetilde k_2\\
  \widetilde k_3^2   & 0 & -\widetilde k_3 \widetilde k_1 \\
  -\widetilde k_3 \widetilde k_2 & \widetilde k_3 \widetilde k_1 &0
 \end{pmatrix}
 =\widetilde k_3 J_\veck.
\eeqs

Let $\vecz\in X_\veck$. Then
\beq\label{Jksq}
J_\veck^2 \vecz=\widetilde\veck\times(\widetilde\veck\times\vecz)=(\widetilde\veck\cdot \vecz)\widetilde\veck- (\widetilde\veck\cdot  \widetilde\veck) \vecz=-\vecz.
\eeq
We observe
\beqs
\ddt\big( E_\veck(t)\vecz \big)= -(\sin t)  \vecz + (\cos  t) J_\veck \vecz
=   (\sin t) J_\veck^2 \vecz + (\cos  t) J_\veck \vecz,
\eeqs
thus,
\beqs
\ddt\big( E_\veck(t)\vecz \big)
= J_\veck \big( E_\veck(t)\vecz \big).
\eeqs
This linear ODE yields the solution
$E_\veck(t)\vecz=e^{tJ_\veck} E_\veck(0)\vecz=e^{tJ_\veck}\vecz$.
Therefore,
\beq\label{Skt}
e^{tS_\veck}\vecz=e^{\widetilde k_3 tJ_\veck}\vecz=E_\veck(\widetilde k_3 t)\vecz.
\eeq
Letting $\vecz=\widehat\vecu_\veck$, we obtain \eqref{etS} thanks to \eqref{SSt} and \eqref{Skt}.
\end{proof}

\begin{proof}[Proof of Lemma \ref{intlem}]
Denote
\beqs
I(t)=\begin{pmatrix}
e^{\alpha t}\cos(\omega t)\\
e^{\alpha t}\sin(\omega t)
    \end{pmatrix}
\text{ and }
D_{-1}=\frac1{\alpha^2+\omega^2}\begin{pmatrix}
        \alpha&\omega\\
        -\omega&\alpha
       \end{pmatrix}.
\eeqs

First, we prove, for all $m\in\N\cup\{0\}$, that
\beq\label{tmI}
\int t^m I(t) dt
= \sum_{n=0}^{m}\frac{(-1)^{m-n} m!}{n!}t^n (D_{-1})^{m+1-n}I(t)+ \mathbf C,
\eeq
 where $\mathbf C$ denotes an arbitrary constant vector in $\R^2$.
Indeed, it is well-known that
\beq\label{ID}
\int I(t) dt= D_{-1} I(t) + \mathbf C,
\eeq
which proves \eqref{tmI} for $m=0$.
For $m\in\N$, integration by parts, with the use of \eqref{ID}, yields
\beqs
\int t^m I(t) dt
       = t^m D_{-1} I(t) - mD_{-1} \int  t^{m-1}  I(t) dt + \mathbf C.
\eeqs
By iterating this recursive relation, we obtain \eqref{tmI}.
Then the statement of Lemma \ref{intlem} obviously follows from \eqref{tmI}.
\end{proof}

\begin{lemma}\label{uniR}
Let $m\in \N$ and $a_n,b_n\in\C$, $\omega_n\in\R$ for $1\le n\le m$.
Suppose the function $f:\R\to\C$ defined by
\beq\label{fform}
f(t)=\sum_{n=1}^m [a_n \cos(\omega_n t)+b_n\sin(\omega_n t)], \text{ for } t\in\R,
\eeq
satisfies
\beq\label{limf}
\lim_{t\to\infty} f(t)=0.
\eeq
Then $f(t)=0$ for all $t\in \R$.
\end{lemma}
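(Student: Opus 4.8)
The plan is to rewrite $f$ as a finite exponential sum and to recover each coefficient as a time-average (Ces\`aro mean), which must vanish once $f(t)\to 0$.

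First I would pass to complex exponentials. Using $\cos(\omega t)=\tfrac12(e^{i\omega t}+e^{-i\omega t})$ and $\sin(\omega t)=\tfrac1{2i}(e^{i\omega t}-e^{-i\omega t})$, and then collecting the terms that share a common frequency, I can write
\[
f(t)=\sum_{j=1}^{p} c_j e^{i\lambda_j t},
\]
where $\lambda_1<\lambda_2<\cdots<\lambda_p$ are distinct real numbers, drawn from $\{\pm\omega_n:1\le n\le m\}$, and $c_j\in\C$. Since every $a_n,b_n$ enters linearly, proving $f\equiv 0$ is equivalent to proving $c_j=0$ for all $j$; and once all $c_j=0$, reversing the substitution returns $a_n=b_n=0$, whence $f\equiv 0$ on all of $\R$.

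The key step is the extraction of the coefficients. For a fixed index $k$, I would consider
\[
\frac1T\int_0^T f(t)e^{-i\lambda_k t}\,dt=\sum_{j=1}^p c_j\,\frac1T\int_0^T e^{i(\lambda_j-\lambda_k)t}\,dt .
\]
The elementary bound $\bigl|\frac1T\int_0^T e^{i\mu t}\,dt\bigr|=\bigl|\frac{e^{i\mu T}-1}{i\mu T}\bigr|\le \frac{2}{|\mu|T}$ for $\mu\neq 0$, together with the value $1$ at $\mu=0$, shows that as $T\to\infty$ the right-hand side converges to $c_k$, since the $\lambda_j$ are distinct and only the $j=k$ term survives. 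On the other hand, the hypothesis $f(t)\to 0$ forces $|f(t)e^{-i\lambda_k t}|=|f(t)|\to 0$, and the Ces\`aro mean of a function tending to $0$ tends to $0$; hence the very same limit equals $0$. Therefore $c_k=0$ for every $k$, which completes the argument.

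The only points needing care — and the closest thing to an obstacle — are the bookkeeping in grouping the real trigonometric terms into distinct exponential frequencies, so that the averaging annihilates every cross term, and the routine verification that averaging kills a function vanishing at infinity; both are elementary. An alternative, equally short route would be induction on $p$ by multiplying through by $e^{-i\lambda_p t}$ and averaging, but the single Ces\`aro-mean computation above dispatches all frequencies at once.
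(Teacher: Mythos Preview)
Your argument is correct and takes a genuinely different route from the paper's. The paper first reduces to real coefficients, rewrites $f$ as $A_0+\sum_{n=1}^N A_n\cos(\omega_n t+\varphi_n)$ with strictly increasing positive $\omega_n$, and then proceeds by induction on $N$: at each step it integrates over a moving window of length $T=2\pi/\omega_{N+1}$ to annihilate the top-frequency term, applies the induction hypothesis to the resulting function $g(t)=\int_t^{t+T}f(\tau)\,d\tau$, and finally isolates $A_{N+1}$. Your approach is the Bohr-mean/almost-periodic extraction: pass to distinct complex exponentials and recover each coefficient $c_k$ in one shot as $\lim_{T\to\infty}\frac1T\int_0^T f(t)e^{-i\lambda_k t}\,dt$, which must be $0$ since $|f(t)e^{-i\lambda_k t}|=|f(t)|\to 0$. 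Your method is shorter and avoids induction; the paper's moving-window device has the minor aesthetic advantage of staying with real trigonometric functions throughout.

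One small slip worth noting: you write that ``reversing the substitution returns $a_n=b_n=0$''. That need not hold if some of the original $\omega_n$ coincide (e.g.\ $\omega_1=\omega_2$, $a_1=-a_2$). This is harmless, however, since the lemma only asks for $f\equiv 0$, and that follows immediately once all $c_j=0$.
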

\begin{proof}
By considering the real and imaginary parts of $f$, we can assume, without loss of generality, that $a_n,b_n\in\R$ for all $n=1,\ldots,m$.
Equation \eqref{fform} can be rewritten so that $\omega_n$'s are strictly increasing non-negative numbers.
We then convert \eqref{fform}, with some re-indexing, to the following form
\beq\label{fN}
f(t)=A_0+ \sum_{n=1}^N A_n \cos(\omega_n t +\varphi_n),
\eeq
where $N\ge 0$, $A_0$, $A_n$ and $\varphi_n$, for $1\le n\le N$, are constants in $\R$, and $\omega_n$'s are positive, strictly increasing in $n$.

\emph{Claim:} $A_n=0$ for all $0\le n\le N$.

With this Claim, we have $f=0$ as desired. We now prove the Claim by induction in $N$.

Case $N=0$. Then $f(t)=A_0$, which, by \eqref{limf}, yields $A_0=0$. Therefore, the Claim is true for $N=0$.

Let $N\ge 0$. Assume the Claim is true for any function of the form \eqref{fN} that satisfies \eqref{limf}.
Now, suppose  function
\beq\label{fNN}
f(t)=A_0+ \sum_{n=1}^{N+1} A_n \cos(\omega_n t +\varphi_n)
\eeq
satisfies \eqref{limf}, with positive numbers $\omega_n$'s being strictly increasing in $n$, and $\varphi_n$'s being arbitrary numbers.

Set $T=2\pi/\omega_{N+1}>0$, and define function $g(t)=\int_t^{t+T}f(\tau)d\tau$.

On the one hand, we have, for $1\le n\le N$, that
\begin{align*}
\int_t^{t+T} \cos(\omega_n \tau+\varphi_n)d\tau
&=\frac{2}{\omega_n}\sin(\omega_n T/2) \cos(\omega_n t +\varphi_n + \omega_n T/2) \\
&=D_n \cos(\omega_n t +\varphi_n' ),
\end{align*}
where $D_n=2\omega_n^{-1} \sin(\omega_n \pi/\omega_{N+1})> 0$ and number $\varphi_n'\in\R$.
On the other hand,
\beqs
\int_t^{t+T} \cos(\omega_{N+1} \tau+\varphi_{N+1})d\tau=0.
\eeqs
Hence,
\beqs
g(t)=A_0 T+ \sum_{n=1}^N A_n D_n \cos(\omega_n t +\varphi_n').
\eeqs

Moreover, it follows from \eqref{limf} that
$g(t)\to 0$, as $t\to\infty$.
By the induction hypothesis applied to function $g$, we obtain $A_0 T=0$ and $A_n D_n=0$, for $1\le n\le N$.
Thus, $A_n=0$, for $0\le n\le N$, and \eqref{fNN} becomes
\beqs
f(t)=A_{N+1}\cos(\omega_{N+1}t+\varphi_{N+1}).
\eeqs
This form of $f$ and property \eqref{limf} imply $A_{N+1}=0$. Therefore,  the Claim holds true for $N+1$. By the induction principle, it is true for all $N\ge 0$.
\end{proof}

\section*{Acknowledgments}
The work of E.S.T.\ was supported in part by the Einstein Stiftung/Foundation - Berlin, through the Einstein Visiting Fellow Program, and by the John Simon Guggenheim Memorial Foundation. The authors would like to thank Ciprian Foias for his insights, inspiring and stimulating discussions.

\bibliography{paperbaseall}{}
\bibliographystyle{abbrv}
\end{document}